\newtheorem{thm}{Theorem}[section]
\newtheorem{prop}[thm]{Proposition}
\newtheorem{lem}[thm]{Lemma}
\newtheorem{cor}[thm]{Corollary}
\theoremstyle{definition}
\newtheorem{defn}[thm]{Definition}
\newtheorem{rem}[thm]{Remark}
\renewcommand{\bar}[1]{\overline{#1}}
\newcommand{\boundary}{\partial}
\newcommand{\abs}[1]{\left|{#1}\right|}
\newcommand{\bigabs}[1]{\bigl| {#1} \bigr|}
\newcommand{\set}[2]{\{\,{#1} \mid {#2} \,\}}
\newcommand{\bigset}[2]{ \bigl\{ \, {#1} \bigm| {#2} \, \bigr\} }
\renewcommand{\setminus}{-}
\newcommand{\mct}{\mathcal{T}}
\newcommand{\field}[1]{\mathbb{#1}}
\newcommand{\Z}{\field{Z}}
\newcommand{\R}{\field{R}}
\newcommand{\Hyp}{\field{H}}
\newcommand{\GaP}{(\Gamma, \mathbb{P})}
\newcommand{\GaPp}{(\Gamma ', \mathbb{P}')}
\newcommand{\id}{\mathbbm{1}}
\newcommand{\of}{\circ}
\DeclareMathOperator{\Isom}{Isom}
\DeclareMathOperator{\CAT}{CAT}
\DeclareMathOperator{\Stab}{Stab}
\DeclareMathOperator{\Cayley}{Cayley}
\newcommand{\Drutu}{Dru{\cb{t}}u}
\newcommand{\showcomments}{yes}
\newsavebox{\commentbox}
\begin{document}

\title[Cusped spaces and quasi-isometries]{Cusped spaces and quasi-isometries of relatively hyperbolic groups}

\author{Brendan Burns Healy}
\address{Department of Mathematical Sciences\\
         University of Wisconsin--Milwaukee\\
         PO Box 413\\
         Milwaukee, WI 53211\\
	 USA}
\email{healyb@uwm.edu}

\author{G.~Christopher Hruska}
\address{Department of Mathematical Sciences\\
         University of Wisconsin--Milwaukee\\
         PO Box 413\\
         Milwaukee, WI 53211\\
	 USA}
\email{chruska@uwm.edu}

\begin{abstract}
A group $\Gamma$ with a family of subgroups $\mathbb{P}$ is relatively hyperbolic if $\Gamma$ admits a cusp-uniform action on a proper $\delta$--hyperbolic space. 
We show that any two such spaces for a given group pair are quasi-isometric, provided the spaces have ``constant horospherical distortion,'' a condition satisfied by Groves--Manning's cusped Cayley graph and by all negatively curved symmetric spaces.
Consequently the Bowditch boundary admits a canonical quasisymmetric structure, which coincides with the ``naturally occurring'' quasisymmetric structure of the symmetric space when considering lattices in rank one symmetric spaces.

We show that a group $\Gamma$ is a lattice in a negatively curved symmetric space $X$ if and only if a cusped space for $\Gamma$ is quasi-isometric to the symmetric space.
We also prove an ideal triangle characterization of the $\delta$--hyperbolic spaces with uniformly perfect boundary due to Meyer and Bourdon--Kleiner.
An appendix concerns the equivalence of several definitions of conical limit point found in the literature.
\end{abstract}

\keywords{Negative curvature, rank one symmetric space, quasi-isometry, quasisymmetry, relative hyperbolicity}

\subjclass[2010]{% 
20F67, % Hyperbolic groups and nonpositively curved groups
20E08} % Groups acting on trees

\date{\today}

\maketitle

%%%%%%%%%%%%%%%%%%%%%%%%%%%%%%%%%%%%%%%%%%%%%%%%%%%%%%%%%%%%%%%%%%%%%%%%%%%
\section{Introduction}
\label{sec:Introduction}
%%%%%%%%%%%%%%%%%%%%%%%%%%%%%%%%%%%%%%%%%%%%%%%%%%%%%%%%%%%%%%%%%%%%%%%%%%%

This article provides an introduction to relatively hyperbolic groups and metric structures on their Bowditch boundaries.
In order to endow the Bowditch boundary with a canonical metric structure, one needs the group to act appropriately on a $\delta$--hyperbolic space whose large-scale geometry (up to quasi-isometry) is canonical.
In the literature, several such model geometries have been used (see \cite{CannonCooper92,BowditchRelHyp,GrovesManning08DehnFilling}) and are known to be quasi-isometric (see \cite{GrovesManningSisto}).
The main goal of this paper is to isolate and abstract a common geometric feature of these models that leads to their quasi-isometric invariance: the notion of horoballs with constant horospherical distortion.
As a consequence of this study, we observe that horoballs in all negatively curved symmetric spaces also have constant horospherical distortion, a fact that is implicit in work of Schwartz \cite{Schwartz95} and made explicit here.
In support of this investigation, we provide detailed proofs of several folk results that are well-known to the experts, but whose proofs we did not find in the literature.

The notion of a relatively hyperbolic group extends Gromov's original definition of a hyperbolic group.
Hyperbolic groups have a geometry that generalizes certain coarse features of uniform lattices in symmetric spaces with negative sectional curvature.
In the study of rank one uniform lattices, the analytic study of the sphere at infinity plays a prominent role, for instance in proofs of Mostow Rigidity and many subsequent quasi-isometric rigidity theorems (see \cite{KleinerLeeb01} for a survey).

Each $\delta$--hyperbolic space $X$ has a Gromov boundary at infinity $\boundary X$, which generalizes the sphere at infinity and which admits a canonical conformal gauge, \emph{i.e.}, a choice of metric that is well-defined up to quasisymmetric homeomorphism.
A group is hyperbolic if it acts  properly, cocompactly, and isometrically on a $\delta$--hyperbolic metric space.
A hyperbolic group $\Gamma$ acts on its Gromov boundary $\boundary \Gamma$, which is defined to be the Gromov boundary of any $\delta$--hyperbolic space on which $\Gamma$ acts properly, cocompactly, and isometrically. The Gromov boundary $\boundary \Gamma$ is well-defined in the following sense: if $\Gamma$ acts geometrically on two $\delta$--hyperbolic spaces $X$ and $Y$, there is an induced $\Gamma$--equivariant quasisymmetric homeomorphism $\boundary X \to \boundary Y$.
In the setting of hyperbolic groups, analytic study of the Gromov boundary leads to powerful theorems about large-scale geometry, including \cite{Bourdon97,BonkSchramm00,BonkKleiner05,Mackay_Random} and numerous others.

Relatively hyperbolic groups have a geometry that generalizes the nonuniform lattices in negatively curved symmetric spaces. Recall that if $X$ is a negatively curved symmetric space, a nonuniform lattice acts properly and cocompactly on the subspace of $X$ formed by removing a certain family of disjoint open horoballs
\cite{Borel_Arithmetic,GarlandRaghunathan70,Bowditch95}.
Such an action on a $\delta$--hyperbolic space is \emph{cusp uniform}.
A \emph{group pair} $(\Gamma,\mathbb{P})$ consists of a group $\Gamma$ together with a finite family of subgroups $\mathbb{P}$.
A group pair $\GaP$ is \emph{relatively hyperbolic} if $\Gamma$ admits a cusp-uniform action on a $\delta$--hyperbolic space $X$ such that every maximal parabolic subgroup is conjugate to a member of $\mathbb{P}$ (see \cite{Gromov87,BowditchRelHyp}). Such a space $X$ is a \emph{weak cusped space} for $\GaP$.

A relatively hyperbolic pair $\GaP$ has a Bowditch boundary $\boundary \GaP$ that is topologically well defined in the following sense: any two weak cusped spaces $X$ and $Y$ for a relatively hyperbolic pair $\GaP$ have Gromov boundaries that are $\Gamma$--equivariantly homeomorphic \cite{BowditchRelHyp,GerasimovPotyagailo16_Similar}.
However in general, there need not be a $\Gamma$--equivariant quasi-isometry between $X$ and $Y$ \cite{Healy}.
Thus in this setting the Bowditch boundary does not appear to have a natural choice of metric structure.

To study quasi-isometries and the associated analysis on the boundary of relatively hyperbolic groups, one requires more control over the choice of weak cusped space.
We assume at this point that each peripheral subgroup $P \in \mathbb{P}$ is finitely generated, and thus has a natural word metric (up to bilipschitz equivalence).
In order to endow the Bowditch boundary with a suitable metric, Bowditch observes in \cite{Bowditch99Boundaries} that one may always choose a weak cusped space whose horoballs satisfy the following geometric criterion:
a horoball corresponding to a maximal parabolic subgroup $P$ has \emph{constant horospherical distortion} if the ``horospherical'' orbits $Px$ are exponentially distorted in $X$ with a lower and upper bound that are exponential functions with the same base (see Section~\ref{sec:HoroDistortion} for details).

This condition, inspired by the geometry of real hyperbolic space, is quite strong; in general for manifolds of pinched negative curvature, the horospherical distortion has upper and lower bounds that are exponential functions with different bases (see Heintze--Im~Hof \cite{HeintzeImHof77}).
Nevertheless, in this article we show that constant horospherical distortion is a natural geometric condition with strong quasi-isometric stability properties.  
A \emph{cusped space} is a weak cusped space whose given parabolic horoballs each have constant horospherical distortion.

The notion of constant horospherical distortion is the basis for Groves--Manning's combinatorial horoball construction (inspired by an earlier construction of Cannon--Cooper \cite{CannonCooper92}).
Attaching combinatorial horoballs to a Cayley graph produces a \emph{cusped Cayley graph}, which has become one of the standard geometric model spaces for relatively hyperbolic groups \cite{CannonCooper92,BowditchRelHyp,GrovesManning08DehnFilling}. The notion of constant horospherical distortion is flexible enough that it is also satisfied by the horoballs in any negatively curved symmetric space.
The following theorem summarizes the central examples and quasi-isometric properties of cusped spaces.

\begin{thm}[Cusped spaces]
\label{thm:introcuspedspaces}
Let $(\Gamma,\mathbb{P})$ be a relatively hyperbolic group pair such that $\Gamma$ is finitely generated.
   \begin{enumerate}
   \item
   \label{item:CanonCooper}
   The cusped Cayley graph of Groves--Manning is a cusped space for $(\Gamma,\mathbb{P})$.
   \item
   \label{item:SymmetricCusped}
   If $X$ is a negatively curved symmetric space and $(\Gamma,\mathbb{P})$ is a lattice with its standard relatively hyperbolic structure, then $X$ is a cusped space for $(\Gamma,\mathbb{P})$.
   \item
   \label{item:CanonicalQI}
   All cusped spaces for $(\Gamma,\mathbb{P})$ are equivariantly quasi-isometric.
   \item
   \label{item:RelQC}
   Let $(H,\mathbb{Q}) \le (\Gamma,\mathbb{P})$ be a finitely generated, relatively quasiconvex subgroup with its induced relatively hyperbolic structure.
   The inclusion $H \to \Gamma$ induces a quasi-isometric embedding of cusped spaces.
   \end{enumerate}
\end{thm}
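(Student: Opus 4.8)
The plan is to treat parts (1) and (2) as verifications that specific spaces meet the definition of a cusped space, to prove (3) as the technical core via a gluing argument, and to deduce (4) from (3) together with relative quasiconvexity. For (1), the $\delta$--hyperbolicity of the Groves--Manning cusped Cayley graph and the cusp-uniformity of the $\Gamma$--action are established in \cite{GrovesManning08DehnFilling}; what remains is to read off constant horospherical distortion directly from the combinatorial horoball distance formula, in which horizontal word-distance $D$ between two points on a level corresponds to ambient distance $\asymp 2\log_2 D$, with matching upper and lower bounds of the same base $2$. This is a short computation once the distortion function is defined in Section~\ref{sec:HoroDistortion}.

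For (2), I would first recall that a rank-one symmetric space $X$ is negatively curved, hence $\delta$--hyperbolic, and that a lattice acts cusp-uniformly on $X$ with peripheral structure given by the cusp stabilizers \cite{Borel_Arithmetic,GarlandRaghunathan70,Bowditch95}. The new content is verifying constant horospherical distortion, where the apparent difficulty is that, by Heintze--Im~Hof \cite{HeintzeImHof77}, the Riemannian horosphere metric contracts anisotropically under the geodesic flow (e.g.\ at rates $e^{-t}$ and $e^{-2t}$ in the complex hyperbolic case). The key observation is that a maximal parabolic $P$ is a lattice in the nilpotent horosphere group, whose word metric is comparable to a homogeneous (Carath\'eodory) gauge, and that the geodesic flow acts on this group precisely as the one-parameter group of graded dilations matching that gauge. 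Because the gauge is self-similar under these dilations, the efficient path from a basepoint $x$ to $px$ dives to depth $\asymp \log\abs{p}$ regardless of the direction of $p$, so $d_X(x,px) \asymp \log\abs{p}$ with a single base. This algebraic self-similarity --- absent in general pinched negative curvature --- is exactly what upgrades the two-sided exponential bounds of Heintze--Im~Hof to a single base.

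Part (3) is the heart of the argument, and I would prove it by showing every cusped space for $(\Gamma,\mathbb{P})$ is equivariantly quasi-isometric to the reference model from (1); transitivity then gives the general statement. Write each cusped space $X$ as a thick part $X_0$ (the complement of the open parabolic horoballs, on which $\Gamma$ acts cocompactly) glued to the horoballs $\{H_i\}$ along their boundary horospheres. On thick parts, the Milnor--\v{S}varc lemma yields an equivariant quasi-isometry $X_0 \to Y_0$ for any two cusped spaces. On a horoball $H_i$ over a parabolic $P$, constant horospherical distortion says the orbit $Px$ is exponentially distorted with a single base; I would promote this orbit statement to an equivariant quasi-isometry of the whole horoball onto a standard combinatorial horoball over $P$, using the horospherical coordinate together with a depth coordinate. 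The crucial point is that two single-base horoballs over the same $P$ may have different bases $\lambda_X,\lambda_Y$, but these are reconciled by rescaling the depth coordinate by the constant factor $\log\lambda_X/\log\lambda_Y$, which is permitted inside a quasi-isometry; this is precisely where ``constant'' (single-base, as opposed to the gapped bounds of the general pinched case) is indispensable. Finally I would assemble the piecewise maps via a quasi-isometry combination lemma for spaces glued along coarsely matched, quasiconvex horospheres. I expect the main obstacle to be this last step: upgrading the orbit-level distortion estimate to a genuine quasi-isometry of horoballs, and verifying that the glued map is coarsely well-defined and $\Gamma$--equivariant along all translates of the horospheres simultaneously.

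For (4), I would use (3) to replace both cusped spaces by convenient models. Relative quasiconvexity of $(H,\mathbb{Q})$ means that the coarse relative hull of an $H$--orbit is quasiconvex in the cusped space for $(\Gamma,\mathbb{P})$; restricting to this hull and applying the Milnor--\v{S}varc lemma gives a quasi-isometric embedding on thick parts. Each peripheral $Q \in \mathbb{Q}$ has the form $H \cap P^g$ for some $P \in \mathbb{P}$, so the $\mathbb{Q}$--horoball of $H$ maps into the corresponding $\mathbb{P}$--horoball of $\Gamma$; since both have constant horospherical distortion and $Q$ is undistorted in $P$, this map is a quasi-isometric embedding with the horospherical coordinate controlled exactly as in (3). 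Combining the thick-part embedding with the horoball embeddings via the same gluing lemma yields the desired quasi-isometric embedding of cusped spaces.
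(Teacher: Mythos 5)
Your treatment of parts (1)--(3) matches the paper's approach. For (1) the paper likewise reads constant horospherical dilation directly off the combinatorial horoball construction (Remark~\ref{rem:CombinatorialCHD}); the paper's horoballs use horizontal edges at scale $e^n$ rather than $2^n$, but this is immaterial. For (2) the paper does not re-derive the self-similarity of the Carnot--Carath\'eodory gauge; it imports it as Lemma~\ref{lem:SymSpaceDilation} (Schwartz's Lemmas 2.2--2.3) and converts it to constant horospherical dilation via cocompactness of the parabolic on each horosphere (Proposition~\ref{prop:SymSpaceCHD}) --- your account of why the two Heintze--Im~Hof bases collapse to a single one is the correct conceptual explanation of that lemma. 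For (3) the paper's cusp extension (Proposition~\ref{prop:HoroballExtension} and Theorem~\ref{thm:CuspExtension}) is exactly your horoball-by-horoball map $pc_1(f_1t)\mapsto \phi(p)c_2(f_2t)$: the dilation functions $f_i$ of Definition~\ref{def:Dilation} are the depth reparametrizations you describe, well-definedness up to bounded error handles the overlap of rays, and the local criterion ``$d(a,b)<D$ implies $d\bigl(\hat\phi(a),\hat\phi(b)\bigr)<D'$'' together with the length-space property takes the place of an explicit gluing lemma. The two obstacles you single out are precisely what Proposition~\ref{prop:HoroballExtension} is designed to dispose of, so your plan for (3) is sound.

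Part (4) is where your argument has a genuine gap. The paper does not prove (4) directly: it quotes Manning--Mart\'{i}nez \cite{ManningMartinez10}, who establish the quasi-isometric embedding for cusped Cayley graphs, and then transfers the statement to arbitrary cusped spaces using Remark~\ref{rem:CombinatorialCHD} and Theorem~\ref{thm:CuspExtension} (this is Corollary~\ref{cor:Quasiconvexity}). Your direct argument hinges on the unproved assertion that each induced peripheral $Q=H\cap P^g$ is undistorted in $P^g$. Since distances between horospherical points $q,q'$ in the two cusped spaces are comparable to $2\log d_Q(q,q')$ and $2\log d_{P}(q,q')$ respectively, what the horoball embedding actually requires is that $\log d_Q$ and $\log d_P|_Q$ be comparable up to multiplicative and additive constants --- a polynomial distortion bound for $Q$ in $P$. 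Neither this nor undistortion is a formal consequence of $Q$ being a subgroup of $P$: subgroups of peripheral subgroups can be exponentially distorted in them, so the relative quasiconvexity of $H$ must be invoked in an essential way exactly here, and extracting the distortion bound from quasiconvexity of the relative hull is the substantive content of the Manning--Mart\'{i}nez argument you would be reproving. As written, your proof of (4) asserts the key consequence of the hypothesis at the one point where the hypothesis has to do its work.
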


We note that Groves and Manning use the term ``cusped space'' to refer to the cusped Cayley graph.
Since the definition of ``cusped space'' in this paper includes the cusped Cayley graph as a special case, there is no conflict of terminology between the two.

In principle, Theorem~\ref{thm:introcuspedspaces}(\ref{item:CanonCooper}) follows closely from the definition of the cusped Cayley graph (see Section~\ref{sec:HoroDistortion} for details).
Theorem~\ref{thm:introcuspedspaces}(\ref{item:SymmetricCusped}) is a consequence of large-scale properties of sub-Riemannian metrics on the nilpotent horospheres in negatively curved symmetric spaces and is implicit in work of Schwartz \cite{Schwartz95}.
Theorem~\ref{thm:introcuspedspaces}(\ref{item:RelQC}) follows from work of Agol--Groves--Manning and Manning--Mart\'{i}nez \cite{AgolGrovesManning_QCERF,ManningMartinez10}, who characterize relative quasiconvexity in terms of the cusped Cayley graph.

A key tool used in the proof of Theorem~\ref{thm:introcuspedspaces}(\ref{item:CanonicalQI}) is a ``cusp extension'' technique modeled on \cite[\S 4.2]{CannonCooper92} and \cite[\S 5]{Schwartz95}, which has become something of a standard method in geometric group theory (see \cite{BowditchRelHyp,Groff13,Durham_Augmented,GrovesManningSisto}). 
Theorem~\ref{thm:cuspextintro} extends and unifies these applications of the cusp-extension technique. A \emph{quasi-isometry of pairs} $(\Gamma,\mathbb{P}) \to (\Gamma',\mathbb{P}')$ is a quasi-isometry of groups that preserves the family of left cosets of peripheral subgroups up to finite distance (see Section~\ref{sec:CuspExtension}).
A theorem of Behrstock--\Drutu--Mosher states that if all peripheral subgroups of $\mathbb{P}$ and $\mathbb{P}'$ are non--relatively hyperbolic, then every quasi-isometry $\Gamma \to \Gamma'$ between relatively hyperbolic groups is a quasi-isometry of pairs \cite{BehrDruMosher}.

\begin{thm}[Cusp extension]
\label{thm:cuspextintro}
Let $\GaP$ and $\GaPp$  be relatively hyperbolic with cusped spaces $X$ and $X'$.
Any quasi-isometry of pairs $(\Gamma,\mathbb{P}) \rightarrow (\Gamma',\mathbb{P}')$ has an extension to a quasi-isometry of cusped spaces $X \to X'$. Furthermore, if $\GaP=\GaPp$, then the extension is roughly equivariant.
\end{thm}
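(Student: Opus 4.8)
The plan is to realize Theorem~\ref{thm:cuspextintro} by the cusp-extension technique: decompose each cusped space into a thick part and a family of horoballs, extend the given quasi-isometry of pairs on each piece separately, and then reassemble the pieces into a single map whose global quasi-isometry constants are controlled by a distance formula for cusped spaces. First I would fix the coarse structure, writing $X$ as the union of its thick part $X_{\mathrm{thick}}$ (the complement of the open parabolic horoballs, on which $\Gamma$ acts cocompactly) with horoballs $\mathcal{H}_i$ glued along horospheres $S_i$, where $\mathcal{H}_i$ is associated to a left coset $g_iP_i$ of a peripheral subgroup; likewise for $X'$. By the Milnor--Svarc lemma the orbit maps give quasi-isometries $\Gamma \to X_{\mathrm{thick}}$ and $\Gamma' \to X'_{\mathrm{thick}}$, so the given $f \colon \Gamma \to \Gamma'$ determines a quasi-isometry $F_0 \colon X_{\mathrm{thick}} \to X'_{\mathrm{thick}}$. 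Because $f$ is a quasi-isometry of pairs it coarsely permutes the peripheral cosets, sending each $g_iP_i$ to within bounded Hausdorff distance of a unique coset $g'_iP'_i$; restricting $f$ and identifying each coset with its peripheral group by left translation yields quasi-isometries $\phi_i \colon (P_i,d_{P_i}) \to (P'_i,d_{P'_i})$ in the word metrics, with constants independent of $i$.

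The second step is the extension into horoballs, which is where the constant horospherical distortion hypothesis does the essential work. Coordinatizing $\mathcal{H}_i$ as $P_i \times [0,\infty)$ with a depth parameter, the constant-distortion hypothesis makes $\mathcal{H}_i$ quasi-isometric to a combinatorial horoball with a single exponential base $\lambda$, for which one has the distance estimate $d_{\mathcal{H}_i}\bigl((p,s),(q,t)\bigr) \asymp \max\bigl\{\, |s-t|,\; 2\log_\lambda d_{P_i}(p,q) - s - t \,\bigr\}$. I would then extend by preserving depth, setting $(p,s) \mapsto (\phi_i(p),s)$. Since $\phi_i$ is a quasi-isometry, $d_{P'_i}(\phi_ip,\phi_iq) \asymp d_{P_i}(p,q)$, hence $\log_\lambda d_{P'_i}(\phi_ip,\phi_iq) = \log_\lambda d_{P_i}(p,q) + O(1)$; substituting into the estimate shows the depth-preserving map is a quasi-isometry of horoballs with uniform constants. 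The decisive point is that the \emph{same} base $\lambda$ appears on both sides, so the depth-preserving map matches the two contraction rates exactly; with distinct bases (as in general pinched negative curvature) the $\log_\lambda$ terms would not align and no depth-preserving quasi-isometry would exist.

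The third step is to glue $F_0$ to the horoball maps and verify a global estimate. On each horosphere $S_i$ the two definitions agree up to bounded error (both lie within bounded distance of $f|_{g_iP_i}$), so the combined map $F \colon X \to X'$ is well defined up to a uniform additive constant. To see $F$ is a quasi-isometry I would invoke a distance formula for the whole cusped space: a geodesic alternates between excursions into horoballs and travel through the thick part, so $d_X$ is coarsely the sum of a thick-part contribution (measured in the relative, or electric, metric) and the horoball penetration contributions. The map $F_0$ respects the relative metric because $f$ is a quasi-isometry of pairs, and each depth-preserving map respects the corresponding penetration contribution; since $f$ also preserves the pattern of which horoballs are entered, $F$ preserves every summand up to uniform error, and a symmetric coarse inverse built from $f^{-1}$ exhibits $F$ as a quasi-isometry.

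I expect the main obstacle to be precisely this last global verification: controlling $d_{X}$ between points in \emph{different} horoballs, or between a horoball and the thick part, where one must guarantee that the entry and exit points of geodesics are matched by $F$ and that no additive error accumulates across the unboundedly many pieces. Isolating the thick-part contribution as an invariant preserved by a quasi-isometry of pairs, and checking that penetration depths transform correctly, is the technical heart; the per-piece estimates are by contrast routine once constant horospherical distortion is in hand. Finally, for rough equivariance when $\GaP = \GaPp$, I would observe that every choice in the construction---the orbit maps on the thick part, the coset matching, and the depth-preserving horoball maps---can be made to commute with the $\Gamma$--action up to bounded error, so that when $f$ intertwines the two $\Gamma$--actions, the assembled extension $F$ does as well.
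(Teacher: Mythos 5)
Your overall architecture---decompose each cusped space into a cocompact thick part and a family of horoballs, use the quasi-isometry of pairs to match peripheral cosets and produce uniform quasi-isometries $\phi_i\colon P_i \to P'_i$, extend into horoballs using constant horospherical distortion, and glue---is exactly the paper's (Proposition~\ref{prop:HoroballExtension}, Corollary~\ref{cor:HoroballExtensionQI}, Theorem~\ref{thm:CuspExtension}), and your depth-respecting horoball map is the same map the paper writes as $\hat\phi\bigl(pc_1(f_1t)\bigr)=\phi(p)c_2(f_2t)$ (note it is depth-preserving only after the bilipschitz reparametrization $f_2\of f_1^{-1}$; it is literally depth-preserving once you have replaced both horoballs by combinatorial ones, as you do). Where you diverge is the step you identify as the ``technical heart'': you propose to certify the glued map via a global distance formula (electric distance plus horoball penetrations), which would work but imports a substantial piece of machinery the paper never needs. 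The paper's route is much lighter: since $X$ is a geodesic (length) space, a map is coarse Lipschitz as soon as points at distance at most some fixed $D$ go to points at uniformly bounded distance (see the citation of \cite[Lem.~1.10]{Roe_coarse} in the proof of Proposition~\ref{prop:HoroballExtension}); any two points of $X$ at distance $\le D$ lie in a common piece (thick part or a single horoball), so the per-piece coarse Lipschitz estimates plus bounded agreement on horospheres already give a global coarse Lipschitz map, and the symmetric construction from a coarse inverse of $f$ finishes the argument. So the hard work is really in the per-horoball extension (Schwartz's argument via constant horospherical dilation), and the global assembly is essentially automatic---no matching of geodesic entry/exit points or penetration depths is required. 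Two small points to make explicit if you carry this out: producing $\phi_i$ as a quasi-isometry in the \emph{word metrics} of $P_i$ and $P'_i$ with uniform constants uses that peripheral subgroups are undistorted in $\Gamma$ (together with a closest-point projection of $f(g_iP_i)$ onto $g'_iP'_i$), and your per-horoball estimate should be quoted for the combinatorial model after invoking the quasi-isometry guaranteed by constant horospherical distortion, since the definition only gives $\exp g(\cdot)$ for a bilipschitz $g$ rather than a single exponential base on the nose.
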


A quasi-isometry is roughly equivariant if it commutes with the group action up to a uniformly bounded error (see Definition~\ref{def:roughequivariance}).

We use the fact that rank one symmetric spaces have constant horospherical distortion to show that one can recognize their nonuniform lattices from the large-scale geometry of their cupsed Cayley graphs. The following theorem generalizes a result of Cannon--Cooper \cite{CannonCooper92} from the setting of real hyperbolic $3$--space to an arbitrary negatively curved symmetric space.

\begin{thm}
\label{thm:IntroRecognizingLattices}
Let $\GaP$ be a relatively hyperbolic pair. Then the cusped Cayley graph $Y$ for $\Gamma$ is quasi-isometric to a negatively curved symmetric space $X$ if and only if there exists a finite normal subgroup $F \triangleleft \Gamma$ such that $\Gamma/F$ is isomorphic to a lattice in $\Isom(X)$
and $\mathbb{P}$ represents the finitely many conjugacy classes of preimages in $\Gamma$ of the maximal parabolic subgroups of the lattice.
\end{thm}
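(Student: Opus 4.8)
The plan is to prove the two implications separately: the reverse direction is structural and rests entirely on the cusped-space machinery already established, while the forward direction is the genuine quasi-isometric rigidity and is carried out on the boundary.

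For the reverse implication, suppose the finite normal subgroup $F \triangleleft \Gamma$ exists, and write $\Lambda = \Gamma/F$ for the resulting lattice in $\Isom(X)$ with its family $\mathbb{P}_\Lambda$ of maximal parabolic subgroups. Since $F$ is finite, the quotient homomorphism $\Gamma \to \Lambda$ is a quasi-isometry, and the hypothesis that $\mathbb{P}$ represents the preimages of $\mathbb{P}_\Lambda$ guarantees that it is in fact a quasi-isometry of pairs $\GaP \to (\Lambda, \mathbb{P}_\Lambda)$. First I would invoke Theorem~\ref{thm:cuspextintro} to extend this to a quasi-isometry between a cusped space for $\GaP$ and a cusped space for $(\Lambda, \mathbb{P}_\Lambda)$. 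By Theorem~\ref{thm:introcuspedspaces}(\ref{item:SymmetricCusped}) the symmetric space $X$ is itself a cusped space for $(\Lambda, \mathbb{P}_\Lambda)$, while by parts (\ref{item:CanonCooper}) and (\ref{item:CanonicalQI}) the cusped Cayley graph $Y$ is a cusped space for $\GaP$ and hence quasi-isometric to every other cusped space for $\GaP$. Concatenating these quasi-isometries yields that $Y$ is quasi-isometric to $X$.

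For the forward implication, suppose $Y$ is quasi-isometric to $X$. The action of $\Gamma$ on $Y$ is isometric and cusp-uniform, so it induces a geometrically finite convergence action on $\boundary Y$ whose bounded parabolic points are precisely the fixed points of the peripheral subgroups $\mathbb{P}$. Transporting this action across the quasisymmetric homeomorphism $\boundary Y \to \boundary X$ supplied by the quasi-isometry, I would obtain an action of $\Gamma$ on the visual boundary $\boundary X$ --- a sphere with its Carnot--Carath\'{e}odory conformal gauge --- by uniformly quasiconformal homeomorphisms, the uniformity coming from the fact that isometries of $Y$ induce uniformly quasi-M\"{o}bius boundary maps. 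Because the action on $Y$ is cusp-uniform its limit set is all of $\boundary X$; the bounded parabolic points form a countable set, so the conical limit points have full measure in $\boundary X$.

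The heart of the argument is to straighten this uniformly quasiconformal convergence group to a conformal one. For $X = \Hyp^2_\R$ the boundary is $S^1$ and the convergence group theorem of Gabai and Casson--Jungreis applies; for $X = \Hyp^3_\R$ the boundary is $S^2$ and the Sullivan--Tukia theorem applies; for $X = \Hyp^n_\R$ with $n \ge 4$ the full-measure set of conical limit points lets one invoke Tukia's rigidity theorem for uniformly quasiconformal groups; for $X = \Hyp^n_\C$ one appeals to Chow's theorem; and for the quaternionic and octonionic cases Pansu's differentiability theorem forces every boundary quasiconformal map to be conformal outright. In each case one obtains a homeomorphism $h$ of $\boundary X$ conjugating the $\Gamma$--action to an action by boundary values of isometries, hence a homomorphism $\rho \colon \Gamma \to \Isom(X)$. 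Its kernel $F$ consists of elements inducing the identity on $\boundary Y$; such elements lie at bounded distance from the identity isometry, so by properness of the action on $Y$ they form a finite normal subgroup. Properness and cusp-uniformity survive the conjugation, so $\rho(\Gamma) = \Gamma/F$ is a discrete, geometrically finite subgroup of $\Isom(X)$ with limit set all of $\boundary X$, and is therefore a lattice. Finally, matching the bounded parabolic points with the cusps identifies $\mathbb{P}$ with the conjugacy classes of preimages of the maximal parabolic subgroups of the lattice.

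I expect the straightening step to be the main obstacle: it is precisely the deep analytic rigidity input and must be dispatched case-by-case across the symmetric spaces, and --- unlike the cocompact setting --- the convergence action here is only geometrically finite, so one must take care that the relevant theorems apply in the presence of bounded parabolic points and that discreteness and finiteness of covolume genuinely persist under the conjugation. As an alternative to the boundary analysis in the cases $X \ne \Hyp^2_\R$, one could note that the quasi-isometry $Y \to X$ coarsely preserves horoballs and therefore restricts to a quasi-isometry from the truncated space of $Y$, on which $\Gamma$ acts cocompactly, to the neutered space of $X$, reducing the claim to Schwartz's quasi-isometric rigidity of non-uniform lattices \cite{Schwartz95}.
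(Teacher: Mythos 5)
Your proposal is correct and follows essentially the same route as the paper: the reverse direction is assembled from the cusp-extension machinery exactly as in Theorem~\ref{thm:LatticeRelHyp} (with Remark~\ref{rem:FiniteExtension} handling the finite kernel), and the forward direction transports the geometrically finite convergence action to $\boundary X$ and invokes the case-by-case quasiconformal straightening theorems with the positive-measure conical limit set hypothesis, exactly as in Propositions \ref{prop:Quasiconjugate}--\ref{prop:EquivariantQuasi} and Theorem~\ref{thm:RecognizingLattices}. The only cosmetic differences are your use of the Gabai/Casson--Jungreis convergence group theorem for $\Hyp^2_{\R}$ where the paper cites Hinkkanen and Markovic, and your optional aside reducing to Schwartz's rigidity via truncated spaces, which the paper deliberately avoids since it does not assume the group is a priori quasi-isometric to a lattice.
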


The proof of the above statement relies on applying classical rigidity theorems for rank one symmetric spaces as in \cite{KleinerLeeb01}. These theorems must be applied with care due to certain technical restrictions in the noncocompact case that are not present in the cocompact case (in particular, see \cite{Tukia86,Tukia94}).
The proof also depends on the fact, promoted by Bowditch, that a group $\Gamma$ of isometries of a rank one symmetric space is a lattice if and only if its action on the boundary sphere is a geometrically finite convergence group, which is a completely topological property
\cite{BeardonMaskit74,GehringMartin87,Bowditch95}.

The rich classical study of analysis on the boundary sphere of a rank one symmetric space has been applied in many ways to understand both uniform and nonuniform lattices.
In particular the rigidity theorems mentioned above are proved using analytic properties of the canonical conformal gauge on the boundary sphere of the symmetric space.
On the other hand, for general relatively hyperbolic groups, the cusped Cayley graph (and its variants in \cite{CannonCooper92,BowditchRelHyp}) has been proposed as a standard model for metrizing the Bowditch boundary (see, for example, \cite{GrovesManningSisto,MackaySisto_RelHyp}).

The following corollary, in essence, ensures that the classical analytical study of the boundary of symmetric spaces coincides with the analytic study of the Bowditch boundary determined by the cusped Cayley graph. 

\begin{cor}
\label{cor:intro}
Let $\Gamma$ be a nonuniform lattice in a rank one symmetric space $X$, and $Y$ its cusped Cayley graph with respect to the maximal parabolic subgroups. Then there exists a quasisymmetric homeomorphism between the Gromov boundaries $\partial X$ and $\partial Y$ equipped with any visual metric.
\end{cor}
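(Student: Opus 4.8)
The plan is to chain together the three existence-and-rigidity statements packaged in Theorem~\ref{thm:introcuspedspaces} and then invoke the standard passage from quasi-isometries to quasisymmetries of Gromov boundaries. First I would observe that the cusped Cayley graph $Y$ of $\Gamma$, taken with respect to the maximal parabolic subgroups $\mathbb{P}$, is a cusped space for the relatively hyperbolic pair $\GaP$ by Theorem~\ref{thm:introcuspedspaces}(\ref{item:CanonCooper}). Since $\Gamma$ is a nonuniform lattice in the rank one symmetric space $X$, the standard relatively hyperbolic structure on $\GaP$ has peripheral family exactly the maximal parabolic subgroups, so Theorem~\ref{thm:introcuspedspaces}(\ref{item:SymmetricCusped}) realizes $X$ itself as a cusped space for the \emph{same} pair $\GaP$. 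The only point requiring a moment of care here is matching the peripheral structure implicit in ``cusped Cayley graph with respect to the maximal parabolic subgroups'' with the standard relatively hyperbolic structure used in part~(\ref{item:SymmetricCusped}); this is immediate from the definitions.

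With both $X$ and $Y$ exhibited as cusped spaces for the same relatively hyperbolic pair, Theorem~\ref{thm:introcuspedspaces}(\ref{item:CanonicalQI}) applies directly and produces a (roughly equivariant) quasi-isometry $f \colon Y \to X$. Both $X$ and $Y$ are proper geodesic $\delta$--hyperbolic spaces, so $f$ extends to a homeomorphism $\boundary f \colon \boundary Y \to \boundary X$ of Gromov boundaries. It then remains only to upgrade $\boundary f$ to a quasisymmetry for visual metrics.

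For this last step I would appeal to the classical fact that a quasi-isometry between proper geodesic Gromov hyperbolic spaces induces a boundary homeomorphism that is quasisymmetric with respect to any choice of visual metrics on the two boundaries. Because any two visual metrics on a fixed Gromov boundary belong to the same quasisymmetric gauge---they differ by a power quasisymmetry (snowflake) equivalence---the phrase ``equipped with any visual metric'' in the statement is unambiguous, and the induced map $\boundary f$ is a quasisymmetry for every admissible choice of visual metric on each side.

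I expect no genuine obstacle: the entire mathematical content of the corollary is already contained in the three parts of Theorem~\ref{thm:introcuspedspaces}, and the quasi-isometry-to-quasisymmetry correspondence for hyperbolic boundaries is standard and well documented. In particular, no appeal to the harder rigidity direction of Theorem~\ref{thm:IntroRecognizingLattices} is needed, since here the lattice is given and we only use the ``forward'' implication that a lattice produces mutually quasi-isometric cusped spaces.
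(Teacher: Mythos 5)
Your proposal is correct and follows essentially the same route as the paper: the paper treats Corollary~\ref{cor:intro} as the special case of Corollary~\ref{cor:introconformalgauge} obtained by noting that $X$ and $Y$ are both cusped spaces for $(\Gamma,\mathbb{P})$ (via Proposition~\ref{prop:SymSpaceCHD} and Remark~\ref{rem:CombinatorialCHD}, with the quasi-isometry supplied concretely in Theorem~\ref{thm:LatticeRelHyp} via the cusp-extension Theorem~\ref{thm:CuspExtension}), and then applies Theorem~\ref{thm:QItoQS} together with the fact that all visual metrics lie in one conformal gauge. Your chaining of Theorem~\ref{thm:introcuspedspaces}(\ref{item:CanonCooper})--(\ref{item:CanonicalQI}) is exactly this argument in packaged form.
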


In fact, the above corollary is a specific case of a more general result that all visual metrics of all cusped spaces for a relatively hyperbolic pair are quasisymmetric, thus allowing one to make analytical statements about the Bowditch boundary of any relatively hyperbolic group pair.

\begin{cor}[Conformal Gauge]
\label{cor:introconformalgauge}
Let $\GaP$ be a relatively hyperbolic pair with finitely generated peripheral subgroups, and let $X$ be  cusped space for $\GaP$. If $\mathcal{G}$ is the conformal gauge of $\partial X$, then whenever $Y$ is a cusped space for $\GaP$ such that $\partial Y$ is equipped with a visual metric, we have that $\partial Y \in \mathcal{G}$. \end{cor}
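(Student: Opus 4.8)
The plan is to deduce the statement from the canonical quasi-isometry between cusped spaces together with the quasi-isometry invariance of the conformal gauge of a Gromov boundary. Recall that the conformal gauge $\mathcal{G}$ of $\partial X$ is by definition the collection of all metrics on $\partial X$ that are pairwise quasisymmetrically equivalent, and that this collection contains every visual metric on $\partial X$: any two visual metrics, whether they differ in basepoint or in visual parameter, are quasisymmetrically equivalent, so $\mathcal{G}$ is independent of the choices made in defining it. Thus it suffices to produce, for an arbitrary cusped space $Y$ with visual metric $\rho_Y$ on $\partial Y$, a quasisymmetric homeomorphism from $(\partial X, \rho_X)$ to $(\partial Y, \rho_Y)$, where $\rho_X$ is a visual metric on $\partial X$; pulling $\rho_Y$ back along such a map then exhibits it as a metric on $\partial X$ lying in $\mathcal{G}$.

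First I would invoke Theorem~\ref{thm:introcuspedspaces}(\ref{item:CanonicalQI}) to obtain a $\Gamma$--equivariant quasi-isometry $f \colon X \to Y$. Both $X$ and $Y$ are proper geodesic $\delta$--hyperbolic spaces, since by definition every cusped space is in particular a weak cusped space, that is, a proper $\delta$--hyperbolic space carrying a cusp-uniform action. The quasi-isometry $f$ therefore extends to a homeomorphism $\partial f \colon \partial X \to \partial Y$ of Gromov boundaries; because $f$ is equivariant, $\partial f$ realizes the canonical $\Gamma$--equivariant identification of Bowditch boundaries, and it is with respect to this identification that the containment $\partial Y \in \mathcal{G}$ is to be understood.

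The analytic heart of the argument is the classical fact that a quasi-isometry between proper geodesic $\delta$--hyperbolic spaces induces a quasisymmetric homeomorphism of their Gromov boundaries equipped with visual metrics (equivalently, that the conformal gauge of the boundary is a quasi-isometry invariant). Applying this to $f$ shows that $\partial f \colon (\partial X, \rho_X) \to (\partial Y, \rho_Y)$ is a quasisymmetry. Since $\partial f$ carries $\bigl(\partial X, (\partial f)^{*}\rho_Y\bigr)$ isometrically onto $(\partial Y, \rho_Y)$, the identity map $(\partial X, \rho_X) \to \bigl(\partial X, (\partial f)^{*}\rho_Y\bigr)$ is a quasisymmetry, and hence the pulled-back metric $(\partial f)^{*}\rho_Y$ lies in the same conformal gauge as the visual metric $\rho_X$, namely $\mathcal{G}$. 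Under the canonical identification $\partial f$ this says precisely that $\partial Y \in \mathcal{G}$, as desired.

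The main obstacle is purely a matter of citing and applying the boundary regularity correctly: one must ensure that the quasi-isometry-to-quasisymmetry principle is available in the form needed, namely for visual metrics and between possibly non-cocompact proper hyperbolic spaces, and that the induced boundary map agrees with the canonical Bowditch identification, so that the statement $\partial Y \in \mathcal{G}$ is well posed. Once these standard facts are in place, no further estimates are required; in particular, the constant horospherical distortion hypothesis enters only through Theorem~\ref{thm:introcuspedspaces}(\ref{item:CanonicalQI}), which is exactly what guarantees that the quasi-isometry $f$ exists in the first place.
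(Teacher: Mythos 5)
Your proof is correct and follows essentially the same route as the paper: the paper derives this corollary directly from Theorem~\ref{thm:CuspExtension} (applied to the identity, yielding a roughly equivariant quasi-isometry $X \to Y$ of cusped spaces), then uses the quasi-isometry-to-quasisymmetry principle (Theorem~\ref{thm:QItoQS}) and the fact that all visual metrics lie in a single gauge. Your invocation of Theorem~\ref{thm:introcuspedspaces}(\ref{item:CanonicalQI}) is just that same cusp-extension result, so there is no substantive difference.
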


%%%%%%%%%%%%%%%%%%%%%%%%%%%%%%%%%%%%%%%%%%%%%%%%%%%
\subsection{Uniformly perfect boundaries}
\label{sec:Intro:UniformPerfect}
%%%%%%%%%%%%%%%%%%%%%%%%%%%%%%%%%%%%%%%%%%%%%%%%%%%

In the analysis of metric spaces, the uniformly perfect spaces are especially well behaved and share many features with connected spaces (see, for instance, \cite{Heinonen01}).
A metric space is \emph{uniformly perfect} if there is $0< \lambda<1$ such that each point $p$ is the limit of a sequence $(p_n)$ satisfying $\lambda^{n+1} \le d(p_n,p) \le \lambda^n$.
It is well-known that every quasi-isometry between $\delta$--hyperbolic spaces induces a quasisymmetric map of their boundaries.
Conversely, a quasisymmetric map between boundaries is induced by a quasi-isometry, provided that the boundaries in question are known to be uniformly perfect (see Section~\ref{sec:Quasisymmetric} for details).

Thus it is desirable to have a geometric criterion for determining whether the boundary of a $\delta$--hyperbolic space is uniformly perfect.
In his dissertation, Meyer gives a geometric criterion for uniform perfection in the setting of general $\delta$--hyperbolic spaces \cite{Meyer_UniformlyPerfect}.
Bourdon--Kleiner observe that this criterion reduces to the following particularly simple form when the hyperbolic space is proper \cite{BourdonKleiner15}.

\begin{prop}[Meyer, Bourdon--Kleiner]
\label{prop:IntroUniformlyPerfect}
Let $X$ be a proper, visual $\delta$--hyperbolic space such that $\boundary X$ contains at least two points.
The boundary $\boundary X$ is uniformly perfect if and only if there exists a constant $K<\infty$ such that each point $x\in X$ lies within distance at most $K$ from all three sides of some ideal triangle.
\end{prop}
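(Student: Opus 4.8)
The plan is to translate both conditions into the language of Gromov products $(\,\xi \mid \eta\,)_o$ based at a fixed $o \in X$ together with a visual metric on $\boundary X$, and then prove the two implications separately. Recall that a visual metric satisfies $d(\xi,\eta) \asymp e^{-\epsilon (\,\xi \mid \eta\,)_o}$ for a parameter $\epsilon>0$, so a ball $B(\xi,\rho)$ corresponds to the set of boundary points whose Gromov product with $\xi$ exceeds $-\tfrac{1}{\epsilon}\log\rho$. Under this dictionary, uniform perfection is equivalent to the following \emph{no-gap} statement: there is a constant $C<\infty$ so that for every $\xi \in \boundary X$ and every sufficiently large $t$, some $\eta \in \boundary X$ has $(\,\xi \mid \eta\,)_o \in [t,\,t+C]$. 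I will use freely the standard coarse geometry of ideal triangles in a $\delta$--hyperbolic space: each side lies in a uniform neighborhood of the union of the other two, there is an \emph{incenter} lying within $O(\delta)$ of all three sides, and any finite set of boundary points together with $o$ is $O(\delta)$--approximated by a tree on which these positions can be computed exactly. Properness supplies a geodesic ray from any point of $X$ to any point of $\boundary X$, and visuality places every point of $X$ within a uniform distance of a ray emanating from $o$.

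For the implication that the triangle condition implies uniform perfection, I would argue the contrapositive. If $\boundary X$ is not uniformly perfect, then for every $C$ there are $\xi$ and $t$ with no $\eta$ satisfying $(\,\xi \mid \eta\,)_o \in [t,\,t+C]$, while perfectness of $\boundary X$ guarantees boundary points flanking this empty window on both sides. Choosing $C$ large and placing $x$ on the ray $[o,\xi)$ at depth $t + C/2$, I would suppose for contradiction that $x$ lies within $K$ of all three sides of an ideal triangle $\Delta(\eta_1,\eta_2,\eta_3)$. Then the incenter of $\Delta$ is within $O(K,\delta)$ of $x$, so $[o,\xi)$ passes near the incenter. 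Passing to an approximating tree for $\{o,\xi,\eta_1,\eta_2,\eta_3\}$, the basepoint $o$ and the point $\xi$ lie in different components of the complement of the incenter, and a short median computation shows that at least one vertex $\eta_i$, namely the one lying in the remaining component, satisfies $(\,\xi \mid \eta_i\,)_o = d(o,x) + O(\delta,K)$, which lands inside the forbidden window once $C$ exceeds the error. This contradiction shows the triangle condition forces uniform perfection.

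For the converse, suppose $\boundary X$ is uniformly perfect and fix $x \in X$. By visuality I may assume, after adjusting $K$ by a bounded amount, that $x$ lies on a ray $[o,\zeta)$ with $\zeta \in \boundary X$, and I write $r = d(o,x)$. The no-gap statement, applied at $\zeta$ and scale $r$, furnishes $\eta_2$ with $(\,\zeta \mid \eta_2\,)_o = r + O(1)$, so that the geodesic $[\zeta,\eta_2]$ tracks $[o,\zeta)$ out to depth $\approx r$ and hence passes within $O(\delta)$ of $x$. Because $\boundary X$ is perfect with a definite diameter, for $r$ large there is a third point $\eta_3$ with $(\,\zeta \mid \eta_3\,)_o < r - O(\delta)$; the hyperbolic inequality for the three Gromov products then forces $(\,\eta_2 \mid \eta_3\,)_o < r - O(\delta)$ as well, so $(\,\zeta \mid \eta_2\,)_o$ is the largest of the three. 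In the approximating tree this places the incenter of $\Delta(\zeta,\eta_2,\eta_3)$ at depth $\approx r$ in the direction of $x$, hence within $O(\delta)$ of $x$; since each side of an ideal triangle passes within $O(\delta)$ of the incenter, all three sides lie within $O(\delta)$ of $x$. The small-$r$ range, where $e^{-\epsilon r}$ exceeds the diameter of $\boundary X$, is handled directly by spanning any three well-separated boundary points, whose incenter is within a bounded distance of $o$ and hence of $x$. Taking $K$ to be the resulting uniform constant completes this direction.

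The step I expect to be the main obstacle is the coarse control of the incenter in the genuinely non-tree setting: both directions hinge on the interchangeability of ``$x$ is within $K$ of all three sides'' with ``$x$ is the coarse incenter,'' and on locating that incenter relative to the ray $[o,\xi)$ (respectively $[o,\zeta)$) to within an additive error depending only on $\delta$ and $K$. Making these identifications uniform, in particular verifying that the medians computed on an approximating tree for the five relevant points transfer back to $X$ with only an $O(\delta)$ loss, and that the no-gap reformulation of uniform perfection holds with constants independent of the basepoint, is where the care is required; the remainder is bookkeeping with Gromov products.
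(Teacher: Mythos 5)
Your proposal is correct in outline but takes a genuinely different route from the paper. The paper does not argue directly from the metric definition of uniform perfection: it quotes Meyer's theorem, which characterizes uniform perfection of $\boundary X$ by the space $X$ being \emph{uniformly equilateral} (every ball $B(w,R)$ with $R>R_0$ contains a triple $x_1,x_2,x_3$ with all products $(x_i|x_j)_{x_k}\ge\mu R$), and then proves the equivalence of uniform equilateralness with the ideal-triangle condition --- in one direction by taking the equilateral triple, pushing its vertices to infinity along rays from the approximate center using visuality and an Arzel\`{a}--Ascoli limit, and in the other by truncating the ideal triangle at radius $R$ along the rays $[w,\xi_i)$ and estimating the resulting Gromov products. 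You instead bypass Meyer entirely, encoding uniform perfection as a ``no-gap'' statement for Gromov products at the basepoint and relating gaps directly to coarse incenters of ideal triangles via tree approximation; in effect you re-prove the relevant case of Meyer's theorem, which makes your argument self-contained at the cost of more delicate bookkeeping. The one place your sketch needs repair is the assertion in the forward direction that $o$ and $\xi$ necessarily lie in different components of the approximating tree minus the incenter $m$: since $[o,\xi)$ is only guaranteed to pass within $K+O(\delta)$ of $m$, they may lie in the same component; but then the median of $\{o,\xi,m\}$ lies within $2K+O(\delta)$ of $x$, and either of the two vertices in the remaining components still yields $(\xi|\eta_i)_o=d(o,x)+O(K,\delta)$, so the contradiction survives. (Your remark about perfectness supplying points flanking the gap is also not available in the contrapositive, but it is not needed.) Both approaches are valid; the paper's is shorter but rests on an external result, while yours is longer in full detail but elementary and self-contained.
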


A detailed proof of this result in the form stated above has not appeared in the literature, so a proof using Meyer's theorem has been included in Section~\ref{sec:UniformlyPerfect} below.

A $\delta$--hyperbolic space admitting a cocompact group action always has uniformly perfect boundary (see Section~\ref{sec:UniformlyPerfect}).
However, in the context of relatively hyperbolic pairs $\GaP$, a weak cusped space does not necessarily admit a cocompact group action \cite{DahmaniYaman_BoundedGeometry}.
Nevertheless, we show in Proposition~\ref{prop:CuspUniformlyPerfect} that every weak cusped space for the relatively hyperbolic $\GaP$ has uniformly perfect boundary.

Suppose $\GaP$ is a relatively hyperbolic group pair.
In the family of all weak cusped spaces for $\GaP$, the property of having constant horospherical distortion is an equivariant quasi-isometry invariant in the following sense. 

\begin{thm}
\label{thm:introconformalgaugeconverse}
Let $\GaP$ be a relatively hyperbolic pair with finitely generated peripheral subgroups. Suppose $X_1$ and $X_2$ are weak cusped spaces for $\GaP$.
If there exists an equivariant quasisymmetric homeomorphism $\partial X_1 \to \partial X_2$ and $X_1$ is a cusped space then $X_2$ is also a cusped space.
\end{thm}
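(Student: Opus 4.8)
The plan is to localize at each parabolic fixed point and to reinterpret constant horospherical distortion as a single-exponent decay condition for the visual metric along a parabolic orbit, a condition that the power-quasisymmetry structure of the given map will preserve. To set up the dictionary, fix $P \in \mathbb{P}$ fixing a point $\xi_1 \in \boundary X_1$, pick a generic $\eta_0 \in \boundary X_1 \setminus \{\xi_1\}$, and consider the orbit $g\eta_0$ for $g \in P$. Translating the horospherical distortion into Gromov products---the depth to which the geodesic $[\,g\eta_0, \xi_1\,]$ dips into the horoball is the Gromov product $(g\eta_0 \mid \xi_1)$ up to additive error---one sees that $X_1$ has constant horospherical distortion at $\xi_1$ if and only if the visual metric satisfies $\rho_1(\xi_1, g\eta_0) \asymp \abs{g}_P^{-s}$ for a single exponent $s>0$, whereas a weak cusped space of non-constant distortion (for instance a variable pinched-curvature horoball) produces a genuine range, with $\rho_1(\xi_1, g\eta_0)$ trapped between $\abs{g}_P^{-s_1}$ and $\abs{g}_P^{-s_2}$ for $s_1 < s_2$. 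This reduces the theorem to showing that the boundary map carries a single-exponent decay rate to a single-exponent decay rate.

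Next I would promote the quasisymmetry $f \colon \boundary X_1 \to \boundary X_2$ to a power quasisymmetry. Both boundaries are uniformly perfect by Proposition~\ref{prop:CuspUniformlyPerfect}, so $f$ is induced by a quasi-isometry $X_1 \to X_2$ (Section~\ref{sec:Quasisymmetric}); consequently $f$ is a \emph{power} quasisymmetry, with gauge $\eta(t) \asymp \max\{t^{\alpha}, t^{1/\alpha}\}$. Equivariance of $f$ under $\Gamma$ then matches the parabolic data: $f(\xi_1)$ is the parabolic fixed point $\xi_2 \in \boundary X_2$ of the same subgroup $P$, and $f(g\eta_0) = g\,f(\eta_0)$, so $f$ sends the $P$-orbit at $\xi_1$ to the $P$-orbit at $\xi_2$ with matching labels.

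Now I would run the preservation argument. Applying the quasisymmetry inequality with common point $\xi_1$ to the triple $\xi_1, g\eta_0, \eta_0$ gives $\rho_2\bigl(\xi_2, g\,f(\eta_0)\bigr) \asymp \rho_2\bigl(\xi_2, f(\eta_0)\bigr)\cdot \eta\bigl(\rho_1(\xi_1, g\eta_0)\bigr)$. Since $\rho_1(\xi_1, g\eta_0) \to 0$, only one branch of the power gauge is active, so $\eta\bigl(\rho_1(\xi_1, g\eta_0)\bigr) \asymp \rho_1(\xi_1, g\eta_0)^{\beta}$ for a single $\beta>0$; combined with the single-exponent decay $\rho_1(\xi_1, g\eta_0) \asymp \abs{g}_P^{-s}$ from the first step, this yields $\rho_2\bigl(\xi_2, g\,f(\eta_0)\bigr) \asymp \abs{g}_P^{-s\beta}$, again a single exponent. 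By the dictionary, $X_2$ has constant horospherical distortion at $\xi_2$. Finiteness of $\mathbb{P}$ together with $\Gamma$-equivariance makes the constants uniform across the finitely many conjugacy classes of parabolic points, so every horoball of $X_2$ has constant horospherical distortion and $X_2$ is a cusped space.

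The main obstacle is the single-exponent issue underlying the first and third steps: a bare quasi-isometry (equivalently, a bare quasisymmetry) only pins the distortion down to a \emph{range} of exponential bases, since its multiplicative constant reparametrizes the cusp depth only bi-Lipschitzly and can let the base oscillate---this is precisely the phenomenon that allows variable pinched curvature to fail to be a cusped space. What rescues the theorem is that the relevant quasisymmetry is not arbitrary but a \emph{power} quasisymmetry, whose gauge degenerates to a single power in the small-scale limit at each parabolic point. Verifying carefully that this single active branch genuinely collapses the range of bases to one base---so that the error in the logarithmic depth coordinate is additive rather than multiplicative, with constants uniform over all parabolic points simultaneously---is where the real work lies.
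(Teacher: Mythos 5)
Your reduction correctly identifies the crux, and your final paragraph names exactly the point where the argument breaks --- but the proposal does not close that gap, and the assertion used to close it is false. A quasisymmetry $f$ with power gauge $\eta(t)\asymp\max\{t^{\alpha},t^{1/\alpha}\}$ controls the ratio $\rho_2\bigl(\xi_2,g f(\eta_0)\bigr)/\rho_2\bigl(\xi_2,f(\eta_0)\bigr)$ only from \emph{above} by $\eta(t)\asymp t^{1/\alpha}$, and from \emph{below} (by applying the gauge of $f^{-1}$) by roughly $t^{\alpha}$, where $t=\rho_1(\xi_1,g\eta_0)/\rho_1(\xi_1,\eta_0)$. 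The ``single active branch'' is a statement about the gauge function, not about the map: the two-sided distortion bounds carry \emph{different} exponents $\alpha$ and $1/\alpha$, so in the logarithmic depth coordinate the error is multiplicative rather than additive, and ``single exponent in'' yields only a \emph{range} of exponents out --- precisely the failure mode you flag as ``where the real work lies.'' That work is not done. (A secondary inaccuracy: Definition~\ref{def:Distortion} permits an arbitrary bilipschitz reparametrization of depth, so constant horospherical distortion corresponds on the boundary to $-\log\rho_1(\xi_1,g\eta_0)$ lying within additive error of a single \emph{bilipschitz function} of $\log\abs{g}_P$, not to a genuine power law; but even with this corrected dictionary the third step fails for the reason above.)

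The paper avoids the boundary entirely at this stage. It uses uniform perfection (Proposition~\ref{prop:CuspUniformlyPerfect}) and Theorem~\ref{thm:QStoQI} to extend the equivariant quasisymmetry to a \emph{roughly equivariant quasi-isometry} $X_1\to X_2$, and then applies Corollary~\ref{cor:cuspedspacerigidity} via Proposition~\ref{prop:converse}: such a quasi-isometry carries geodesic rays into the cusp to rays up to a bilipschitz reparametrization of depth, and the conditions of Definition~\ref{def:Dilation} are phrased in terms of \emph{bounded} distances $d\bigl(pc(ft),p'c(ft)\bigr)\le D$ together with an arbitrary bilipschitz dilation function --- both of which survive a roughly equivariant quasi-isometry with only additive losses, the reparametrization being absorbed into the dilation function the definition already allows. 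The extra leverage is equivariance along the entire ray into the cusp, not just at the ideal point; that is the information your purely boundary-side argument discards and cannot recover from the gauge alone.
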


We note that the proof of this theorem relies on the uniform perfection of boundaries of all weak cusped spaces.

%%%%%%%%%%%%%%%%%%%%%%%%%%%%%
\subsection{Organization of the sections}
%%%%%%%%%%%%%%%%%%%%%%%%%%%%%

Section~\ref{sec:Quasisymmetric} reviews some terminology and machinery for the study of $\delta$--hyperbolic spaces and their boundary. Additionally we recall the correspondence between quasi-isometries of hyperbolic spaces and quasisymmetric boundary maps, and we review the notion of a quasi-action.
Section~\ref{sec:ConvergenceGroup} gives background on discrete convergence group actions. In particular, we recall that the induced action of a discrete isometry group on the boundary of a hyperbolic space has the form of a convergence group action. We include information on how this boundary action encodes relevant data about the action on the space itself.
In Section~\ref{sec:RelHyp}, we provide several equivalent characterizations of relative hyperbolicity and discuss the Bowditch boundary of a relatively hyperbolic pair.

Section~\ref{sec:BoundedParabolic} contains geometric results about the horoballs induced by cusp-uniform actions. We introduce bounded parabolic actions, which provides a unified framework for the study of horoballs in weak cusped spaces.

Section~\ref{sec:UniformlyPerfect} focuses on uniform perfection of boundaries of proper $\delta$--hyperbolic spaces.
We give a proof of Proposition~\ref{prop:IntroUniformlyPerfect} and show that all weak cusped spaces have uniformly perfect boundary.

In Section~\ref{sec:HoroDistortion}, we define the closely related properties of constant horospherical distortion and constant horospherical dilation and prove their equivalence.
Proposition~\ref{prop:SymSpaceCHD} and Remark~\ref{rem:CombinatorialCHD} establish the results claimed in assertions (\ref{item:CanonCooper}) and (\ref{item:SymmetricCusped}) of Theorem~\ref{thm:introcuspedspaces}.

In Section~\ref{sec:carnot}, we review some relevant results about negatively curved symmetric spaces and Heintze groups of Carnot type. We conclude by demonstrating that horoballs in these spaces have constant horospherical distortion.

In Section~\ref{sec:CuspExtension} we prove several results involving cusp extensions of maps between various weak cusped spaces.
Theorem~\ref{thm:introcuspedspaces}(\ref{item:CanonicalQI}) is a direct consequence of Theorem~\ref{thm:CuspExtension}, and Theorem~\ref{thm:introcuspedspaces}(\ref{item:RelQC}) is proved in Corollary~\ref{cor:Quasiconvexity}.
We also prove Theorem~\ref{thm:introconformalgaugeconverse} and Corollary~\ref{cor:introconformalgauge} in this section (see Corollary~\ref{cor:conformalgauge}).

Section~\ref{sec:SymmSpaces} contains the proof of Theorem~\ref{thm:IntroRecognizingLattices}, which focuses on the specific case of rank one symmetric spaces.
The proof is divided into two parts, which are established in
Theorems \ref{thm:LatticeRelHyp}~and~\ref{thm:RecognizingLattices}.

Finally, an appendix contains a detailed examination of conical limit points. We give self-contained proofs that three different definitions of conical limit point found in the literature are equivalent.
The results of this appendix are widely known among experts, but the authors could not find a concise proof of this particular equivalence in the literature.

%%%%%%%%%%%%%%%%%%%%%%%%%%%%%%%%%%%%%%%%
\subsection{Acknowledgements}
The authors thank Daniel Groves, Matthew Haulmark and Gabriel Pallier for helpful discussions and feedback that have improved the exposition of this article.
The second author was partially supported by grants \#318815 and \#714338 from the Simons Foundation.

%%%%%%%%%%%%%%%%%%%%%%%%%%%%%%%%%%%%%%%%
\section{Hyperbolic spaces and quasisymmetric maps}
\label{sec:Quasisymmetric}
%%%%%%%%%%%%%%%%%%%%%%%%%%%%%%%%%%%%%%%%%

This section reviews visual metrics on the boundary of a $\delta$--hyperbolic space and the correspondence between quasi-isometries of $\delta$--hyperbolic spaces and quasisymmetric maps of their boundaries.
Many of the notions discussed below make sense for arbitrary $\delta$--hyperbolic spaces.  However for the sake of simplicity, we focus on the case of proper $\delta$--hyperbolic spaces.

Throughout this paper, we use the convention that the words ``ball'' and ``neighborhood'' refer to \emph{closed} balls and neighborhoods.

\begin{defn}[$\delta$--hyperbolic]
Let $X$ be a metric space. For points $x,y,z \in X$ the \emph{Gromov product} is defined by the rule
\[
   (y|z)_x = \frac{1}{2} \bigl( d(x,y) + d(x,z) - d(y,z) \bigr).
\]
The \emph{equiradial points} of a geodesic triangle $\Delta$ with vertices $x,y$ and $z$ are the unique points $p_x \in [y,z]$, \ $p_y \in [x,z]$, and $p_z\in [x,y]$ satisfying the following:
\begin{align*}
   d(x,p_y)=d(x,p_z) &= (y|z)_x, \\
   d(y,p_z)=d(y,p_x) &= (z|x)_y, \\
   d(z,p_x)=d(z,p_y) &= (x|y)_z.
\end{align*}
A geodesic metric space $(X,d)$ is \emph{$\delta$--hyperbolic} if for each geodesic triangle with vertices $x,y,z$ in $X$ the following holds:
if $y' \in [x,y]$ and $z'\in [x,z]$ are points such that $d(x,y') = d(x,z') \le (y|z)_x$, then $d(y',z') \le \delta$.
Note that if $X$ is $\delta$--hyperbolic then, for each geodesic triangle, any side lies in the closed $\delta$--neighborhood
of the union of the other two sides.
\end{defn}

\begin{defn}[Gromov boundary]
If $X$ is a proper $\delta$--hyperbolic space, the \emph{boundary} (or \emph{Gromov boundary}) $\boundary X$ of $X$ is the set of all equivalence classes of geodesic rays in $X$, where two rays $c,c'$ are considered equivalent if the distance $d\bigl(c(t),c'(t)\bigr)$ remains bounded as $t \to \infty$.
The set $X \cup \boundary X$ has a natural topology that is compact and metrizable (see, for example, Bridson--Haefliger \cite[\S III.H.3]{BH99}).
\end{defn}

\begin{defn}[Extended Gromov product]
The Gromov product is extended to $X \cup \boundary X$ as follows.
Suppose $X$ is $\delta$--hyperbolic, and suppose $x \in X$ and $y,z \in X \cup \boundary X$.  We define
\[
   (y|z)_x = \inf\ \liminf_{i\to \infty} (y_i|z_i)_x,
\]
where the infimum is taken over all pairs of sequences $y_i \to y$ and $z_i \to z$ of points in $X$.
If $y,z \in X$ this definition agrees with the previous definition.
\end{defn}

\begin{thm}[\cite{CDP90,ABC91}]
Let $X$ be a proper $\delta$--hyperbolic space.
The following ``Gromov inequality'' holds for all $x \in X$ and all $u,v,w \in X \cup \boundary X$:
\[
   (u|v)_x \ge \min \bigl\{ (u|w)_x, (v|w)_x \bigr\} - \delta
\]
\end{thm}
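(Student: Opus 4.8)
The plan is to prove the inequality first for points $u,v,w\in X$ and then bootstrap to $X\cup\boundary X$ using the definition of the extended Gromov product.

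For the finite case, I would assume without loss of generality that $(v|w)_x\le(u|w)_x$ and set $t=(v|w)_x$, aiming for the sharper statement $(u|v)_x\ge(v|w)_x-\delta$. Fix geodesics $[x,u]$, $[x,v]$, $[x,w]$, and for the parameter $t$ let $u_t,v_t,w_t$ denote the points at distance $t$ from $x$ along these respective geodesics. These points exist because the Gromov product is bounded above by each endpoint distance and because $t=(v|w)_x\le(u|w)_x\le d(x,u)$ by the ordering assumption, so that $t\le d(x,u)$, $t\le d(x,v)$, and $t\le d(x,w)$. Applying the defining condition of $\delta$--hyperbolicity to the triangle $xvw$ gives $d(v_t,w_t)\le\delta$, and applying it to $xuw$ gives $d(u_t,w_t)\le\delta$, so the triangle inequality yields $d(u_t,v_t)\le 2\delta$. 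Since $u_t$ and $v_t$ lie on $[x,u]$ and $[x,v]$, the estimate $d(u,v)\le\bigl(d(x,u)-t\bigr)+d(u_t,v_t)+\bigl(d(x,v)-t\bigr)$ then produces $(u|v)_x\ge t-\delta$, as desired. I expect this computation to be entirely routine once the three fellow--traveling points are in place.

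To pass to the boundary, fix arbitrary sequences $u_i\to u$ and $v_i\to v$ and choose any sequence $w_i\to w$. The finite inequality gives $(u_i|v_i)_x\ge\min\{(u_i|w_i)_x,(v_i|w_i)_x\}-\delta$ for every $i$. Taking the lower limit and using the elementary fact that $\liminf_i\min\{a_i,b_i\}\ge\min\{\liminf_i a_i,\liminf_i b_i\}$ reduces the problem to comparing $\liminf_i(u_i|w_i)_x$ and $\liminf_i(v_i|w_i)_x$ with the extended products. The key point is that for this particular pair of sequences $\liminf_i(u_i|w_i)_x\ge(u|w)_x$ and $\liminf_i(v_i|w_i)_x\ge(v|w)_x$, simply because the extended product is defined as the infimum of such lower limits over all admissible sequences. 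Combining these gives $\liminf_i(u_i|v_i)_x\ge\min\{(u|w)_x,(v|w)_x\}-\delta$, valid for every choice of $u_i\to u$ and $v_i\to v$; taking the infimum over these sequences yields $(u|v)_x\ge\min\{(u|w)_x,(v|w)_x\}-\delta$.

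The main obstacle is bookkeeping with the direction of the inequalities in the limiting step rather than any genuine geometry. Because the extended product is an infimum over approximating sequences, one cannot bound the individual terms $(u_i|w_i)_x$ from below by $(u|w)_x$; the comparison becomes valid only after passing to the lower limit, so it is essential that the auxiliary sequence $w_i$ be introduced before taking $\liminf$ and that the final infimum be taken only over the $u$-- and $v$--sequences. Handling the mixed cases, in which some of $u,v,w$ already lie in $X$, then requires no extra work, since one may take the corresponding approximating sequences to be constant.
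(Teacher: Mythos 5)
Your proof is correct: the finite case via the three equal-radius points $u_t,v_t,w_t$ and the triangle inequality gives exactly $(u|v)_x\ge t-\delta$, and the passage to $X\cup\boundary X$ handles the order of quantifiers (auxiliary $w$--sequence fixed before the $\liminf$, infimum taken only over the $u$-- and $v$--sequences at the end) properly. The paper gives no proof of this statement, citing \cite{CDP90,ABC91} instead, and your argument is the standard one found there, so there is nothing further to compare.
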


The following result is well known (see \cite[Prop.~2.2.2]{CDP90}), but we discuss the details in order to keep track of the constant obtained.

\begin{prop}[Thin ideal triangles]
\label{prop:idealthin}
Let $X$ be a proper $\delta$--hyperbolic space.
Suppose $x,y,z$ are distinct points of $X \cup \boundary X$.  Then there exists a geodesic triangle with vertices $x,y,z$.  For any such triangle, each side lies in the closed $5\delta$--neighborhood of the union of the other two sides.
\end{prop}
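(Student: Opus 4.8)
The plan is to treat the two claims---existence of the triangle and $5\delta$--thinness---separately, in each case reducing the ideal configuration to the case of finite triangles by approximating each ideal vertex from within $X$.

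For existence, I would produce a geodesic for each of the three pairs of (distinct) vertices. If both members of a pair lie in $X$, a geodesic exists because $X$ is a geodesic metric space. If one or both lie in $\boundary X$, I would choose sequences $x_i \to x$ and $y_i \to y$ in $X$, take the geodesics $[x_i,y_i]$, and extract a subsequential limit using the Arzel\`a--Ascoli theorem, which applies because $X$ is proper. Here the hypothesis that the endpoints are distinct guarantees that the approximating geodesics do not escape every compact set, so the limit is a genuine (finite, singly infinite, or bi-infinite) geodesic with the prescribed endpoints; this is the standard construction recorded in \cite[\S III.H.3]{BH99}.

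For thinness, the starting point is that \emph{finite} geodesic triangles are already $\delta$--thin: this is exactly the remark recorded in the definition of $\delta$--hyperbolicity. The second ingredient is a stability estimate: any two geodesics with the same pair of endpoints in $X \cup \boundary X$ lie within Hausdorff distance $2\delta$. For endpoints in $X$ this follows by applying $\delta$--thinness to the degenerate triangle (geodesic bigon) they form; pushing one or both endpoints out to $\boundary X$ and applying thinness twice to the resulting long thin triangles upgrades the bound from $\delta$ to $2\delta$. With these two facts in hand, fix a point $p$ on the side $[y,z]$. I would approximate the vertices by points $x_i, y_i, z_i \in X$ with $x_i \to x$, $y_i \to y$, $z_i \to z$, chosen so that $y_i, z_i \in [y,z]$; then the subsegment of $[y,z]$ between them is itself a geodesic, which I take to be the side $[y_i,z_i]$ of the finite comparison triangle $x_i y_i z_i$, and for large $i$ it contains $p$. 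Applying $\delta$--thinness to $x_i y_i z_i$ places $p$ within $\delta$ of $[x_i,y_i] \cup [x_i,z_i]$; after passing to a subsequence the relevant side is constant, say $[x_i,y_i]$, giving points $q_i \in [x_i,y_i]$ with $d(p,q_i) \le \delta$. Since $X$ is proper the $q_i$ subconverge to a point $q$ with $d(p,q) \le \delta$, and the geodesics $[x_i,y_i]$ converge uniformly on compact sets to a geodesic from $x$ to $y$ passing through $q$. By the stability estimate this limit geodesic lies within $2\delta$ of the given side $[x,y]$, so $d\bigl(p,[x,y]\bigr) \le \delta + 2\delta$. Running the symmetric possibility, and covering the configurations in which some vertices already lie in $X$ (by using constant approximating sequences) in the same way, then bookkeeping the constants with room to spare, yields the asserted bound $5\delta$.

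The routine parts are the existence construction and the finite-triangle thinness, which are immediate. The main obstacle is the stability/transfer step: replacing the sides of the approximating finite triangle by the actual ideal sides, and in particular making the resulting estimate \emph{uniform} over all of $[y,z]$---including points $p$ lying arbitrarily far out toward an ideal vertex. Controlling this error is exactly what forces the constant to grow from $\delta$ to a fixed multiple of $\delta$, and careful accounting there is what produces the stated $5\delta$.
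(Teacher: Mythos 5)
Your proposal is correct and follows essentially the same route as the paper: both rest on an Arzel\`a--Ascoli limit of finite approximating triangles together with the $2\delta$ Hausdorff-distance stability of geodesics sharing endpoints in $X \cup \boundary X$ (itself obtained from thinness of quadrilaterals, i.e., two applications of triangle thinness). The only difference is bookkeeping: by anchoring the approximating triangles on the given side $[y,z]$ you apply the stability estimate just once and actually obtain the sharper bound $3\delta$, whereas the paper compares the arbitrary triangle to a single $\delta$-thin limit triangle and accumulates $2\delta+\delta+2\delta=5\delta$.
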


\begin{proof}
In $X$, any side of a geodesic quadrilateral lies in the closed $2\delta$--neighborhood of the union of the other three sides \cite[Lem.~1.2.3]{BuyaloSchroeder07}.
It follows that any pair of geodesics with the same endpoints in $X \cup\boundary X$ are at Hausdorff distance at most $2\delta$.
By the Arzel\`{a}--Ascoli Theorem, there exists a triangle $\Delta'$ with vertices $x,y,z$ obtained as a limit of a sequence of triangles with vertices in $X$.  Any side of $\Delta'$ lies in the closed $\delta$--neighborhood of the union of the other two sides.
The result follows since an arbitrary triangle $\Delta$ with the same vertices as $\Delta'$ has sides at Hausdorff distance at most $2\delta$ from the sides of $\Delta'$.
\end{proof}

\begin{defn}
A function $f\colon X \to Y$ between metric spaces is \emph{coarsely Lipschitz} if for every $R\ge 0$ there exists $S\ge 0$ such that
\[
   d(x,x')<R \Longrightarrow d\bigl( f(x),f(x') \bigr) < S.
\]
Two maps $f,f'\colon X \to Y$ are \emph{close} if $d(f,f') = \sup \bigset{d\bigl( f(x),f'(x) \bigr)}{x \in X}<\infty$.
A coarsely Lipschitz map $f \colon X\to Y$ is a \emph{coarse equivalence} if there exists a coarsely Lipschitz map $g \colon Y\to X$ such that $g\of f$ and $f\of g$ are close to the identity maps on $X$ and $Y$ respectively.
\end{defn}

\begin{defn}
\label{def:quasiisometry}
A function $f\colon X \to Y$ between metric spaces is \emph{large-scale Lipschitz} with parameters $\lambda\ge 1$ and $\epsilon\ge 0$ if for all $x_1,x_2\in X$ we have
\[
   d \bigl( f(x_1),f(x_2) \bigr) \le
   \lambda \, d(x_1,x_2) + \epsilon.
\]
A large-scale Lipschitz map $f \colon X \to Y$ with parameters $\lambda$ and $\epsilon$ is a \emph{$(\lambda,\epsilon)$--quasi-isometry} if there exists a large-scale Lipschitz map $g\colon Y \to X$ with the same parameters such that $f\of g$ and $g\of f$ are $\epsilon$--close to identity functions.
A function is a \emph{quasi-isometry} if it is a $(\lambda,\epsilon)$--quasi-isometry for some constants $\lambda$ and $\epsilon$. 
%Two quasi-isometries $f_1,f_2\colon X \to Y$ are \emph{equivalent} if their sup-distance is finite; \emph{i.e.}, $d(f_1,f_2) = \sup \bigset{d\bigl(f_1(x),f_2(x)\bigr)}{x \in X}< \infty$.
\end{defn}

Many metric spaces considered in this article are length spaces.  In that context the following notions are equivalent (see, for example, \cite[\S 1.3]{Roe_coarse}).

\begin{lem}
\label{lem:LengthSpaceLipschitz}
Let $f\colon X \to Y$ be a function from a length space $X$ to a metric space $Y$. The following are equivalent:
\begin{enumerate}
    \item There exist $R,S >0$ such that $d(x,x')<R \Longrightarrow d\bigl( f(x),f(x') \bigr) < S$.
    \item $f$ is coarsely Lipschitz.
    \item $f$ is large-scale Lipschitz.
\end{enumerate}
If $X$ and $Y$ are length spaces then any coarse equivalence is a quasi-isometry.
\end{lem}

%\begin{defn}[Coarse equivalence]
%A function $f\colon X \to Y$ between metric spaces is a \emph{coarse equivalence} if there are \emph{control functions} $c^-,c^+ \colon \R \rightarrow \R$ such that for any points $x_1,x_2 \in X$,
%\[
%  c^-\bigl( d(x_1,x_2) \bigr) \leq  d \bigl( f(x_1),f(x_2) \bigr) \le
%   c^+ \bigl( d(x_1,x_2) \bigr).
%\]
%Two metrics on a space are said to be \emph{coarsely equivalent} if the identity map induces a coarse equivalence.
%\end{defn}

\begin{defn}[Quasisymmetry]
\label{def:Quasisymmetry}
A homeomorphism $f \colon Z \to Z'$ between metric spaces is an \emph{$\eta$--quasisymmetry} or \emph{quasisymmetry with modulus $\eta$} if $\eta \colon [0,\infty) \to [0,\infty)$ is a homeomorphism such that
\[
   d(x,a) \le t\,d(x,b) \Longrightarrow 
   d\bigl( f(x),f(a) \bigr) \le \eta(t)\,d\bigl( f(x),f(b) \bigr)
\]
for each $t\ge 0$ and all points $x,a,b \in Z$.  
A group action on $Z$ is \emph{uniformly quasisymmetric} if there exists a modulus $\eta$ such that the action is by $\eta$--quasisymmetric homeomorphisms of $Z$.

Two metrics $d$ and $d'$ on a space $Z$ are \emph{quasisymmetric} if the identity map $(Z,d) \to (Z,d')$ is a quasisymmetry.
For a metric space $(Z,d)$, the \emph{conformal gauge} $\mathcal{G}(Z,d)$ is the set of all metrics $d'$ on $Z$ quasisymmetric to $d$.
\end{defn}

For each proper $\delta$--hyperbolic space $X$, the boundary $\boundary X$ admits a natural family of metrics, known as visual metrics.

\begin{defn}[Visual metric]
A metric $d$ on $\boundary X$ is a \emph{visual metric} with basepoint $w \in X$ and parameter $a>1$ if there are positive constants $c_1$ and $c_2$ such that
\[
   c_1 a^{-(\xi|\xi')_w} \le d(\xi,\xi')
     \le c_2 a^{-(\xi|\xi')_w}
\]
for all $\xi,\xi' \in \boundary X$.
\end{defn}

\begin{rem}[Quasisymmetric maps of boundaries]
\label{rem:hypbdryquasisym}
The boundary $\boundary X$ of a proper $\delta$--hyperbolic space $X$ admits a visual metric, and all visual metrics on $\boundary X$ are quasisymmetric (see Buyalo--Schroeder \cite{BuyaloSchroeder07}). 
In other words, $\boundary X$ has a unique  conformal gauge that contains every visual metric.

A map between visual boundaries is a \emph{quasisymmetry} if it is quasisymmetric with respect to this conformal gauge; \emph{i.e.}, if it is quasisymmetric with respect to some---hence every---choice of visual metrics.
\end{rem}

The following results provide an equivalence between quasi-isometries between proper $\delta$--hyperbolic spaces and quasisymmetries of their boundaries.

\begin{thm}[\cite{BuyaloSchroeder07}, Thm.~5.2.17]
\label{thm:QItoQS}
Let $f\colon X \to Y$ be a quasi-isometry of proper $\delta$--hyperbolic spaces.  Then $f$ naturally induces an $\eta$--quasisymmetric homeomorphism $\boundary f\colon \boundary X \to \boundary Y$ with modulus $\eta$ depending only on the given quasi-isometry and hyperbolicity constants and the choice of visual metrics on $\boundary X$ and $\boundary Y$. Close quasi-isometries induce the same boundary map.
\end{thm}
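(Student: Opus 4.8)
The plan is to prove Theorem~\ref{thm:QItoQS} by first constructing the boundary map $\boundary f$ and then verifying that it is a quasisymmetry with a controlled modulus. First I would show that a quasi-isometry $f\colon X \to Y$ carries geodesic rays to quasigeodesic rays: if $c$ is a geodesic ray in $X$, then $f\of c$ is a $(\lambda,\epsilon)$--quasigeodesic in $Y$. By the Morse Lemma (the stability of quasigeodesics in proper $\delta$--hyperbolic spaces), $f\of c$ lies within bounded Hausdorff distance of an actual geodesic ray, and hence determines a well-defined point of $\boundary Y$. I would check that equivalent rays in $X$ (those at bounded distance) map to equivalent rays in $Y$, so that $\boundary f\colon \boundary X \to \boundary Y$ is well-defined; applying the same argument to the coarse inverse $g$ yields a map $\boundary g$ that is inverse to $\boundary f$, establishing that $\boundary f$ is a bijection. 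The fact that equivalent quasi-isometries (those at finite sup-distance) induce the same map is immediate, since rays tracked by $f$ and $f'$ remain at bounded distance and hence define the same boundary point.

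The heart of the matter is the quasisymmetry estimate, and for this I would work directly with the Gromov product on the boundary. The key technical lemma is that $f$ \emph{distorts the extended Gromov product by a bounded additive amount}: there is a constant $C$, depending only on $\lambda$, $\epsilon$, and the hyperbolicity constants, such that for all $\xi,\xi' \in \boundary X$,
\[
   \bigl( \boundary f(\xi) \bigm| \boundary f(\xi') \bigr)_{f(w)}
   \ge \lambda^{-1} (\xi|\xi')_w - C,
\]
together with a matching upper bound of the form $\lambda (\xi|\xi')_w + C$. This follows by approximating the boundary points with sequences $x_i \to \xi$ and $x_i' \to \xi'$, applying the definition of the Gromov product as a signed combination of distances, and using that $f$ distorts each distance by at most the multiplicative factor $\lambda$ and additive factor $\epsilon$. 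Some care is needed because the Gromov product at infinity is defined as an infimum of $\liminf$'s over approximating sequences, so I would use the standard fact that for any approximating sequences the value of $(\xi|\xi')_w$ is determined up to an additive $2\delta$ error, which lets me pass freely between interior and boundary estimates.

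Once the additive control on the Gromov product is in hand, I would convert it into a quasisymmetry statement using the comparison between visual metrics and the Gromov product. Fix visual metrics of parameters $a>1$ on $\boundary X$ and $b>1$ on $\boundary Y$, with comparison constants as in the definition of visual metric. The additive bound $(\boundary f \xi \mid \boundary f \xi') \gtrsim \lambda^{-1}(\xi|\xi') - C$ translates, after exponentiating, into a \emph{power-quasisymmetry} type inequality
\[
   d_Y \bigl( \boundary f(\xi), \boundary f(\xi') \bigr)
   \le C' \, d_X(\xi,\xi')^{\alpha}
\]
on bounded-diameter pieces, where the exponent $\alpha$ depends on $\lambda$ and the ratio $\log b / \log a$. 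The standard argument (as in Buyalo--Schroeder) then shows that a homeomorphism satisfying such two-sided power-bounds on Gromov products is an $\eta$--quasisymmetry with $\eta$ of the explicit form $\eta(t) = C'' \max\{t^{\alpha}, t^{1/\alpha}\}$; crucially this modulus depends only on the quasi-isometry and hyperbolicity constants and the chosen visual parameters, as claimed. I expect the main obstacle to be bookkeeping in the second paragraph: tracking the additive constants through the infimum/$\liminf$ definition of the boundary Gromov product and ensuring the error terms remain uniform in $\xi,\xi'$, rather than any conceptual difficulty. Since the statement cites Buyalo--Schroeder directly, the cleanest route is to reduce to their framework by establishing the additive Gromov-product distortion bound and then invoking their characterization of quasisymmetries via the Gromov product.
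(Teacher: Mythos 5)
The paper offers no proof of this statement: it is imported verbatim as Theorem~5.2.17 of Buyalo--Schroeder, so there is nothing internal to compare against. Your outline is the standard argument from that source and is essentially sound: the Morse lemma gives a well-defined bijection $\boundary f$, equivalent quasi-isometries track the same rays, and the key estimate is the two-sided distortion bound $\lambda^{-1}(\xi|\xi')_w - C \le \bigl(\boundary f(\xi)\bigm|\boundary f(\xi')\bigr)_{f(w)} \le \lambda(\xi|\xi')_w + C$ (note this is multiplicative-plus-additive, not ``a bounded additive amount'' as your italicized phrase says; the displayed inequality is the correct one).

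The one step you should tighten is the passage from Gromov-product distortion to quasisymmetry. The displayed consequence $d_Y(\boundary f\xi,\boundary f\xi') \le C'\,d_X(\xi,\xi')^{\alpha}$, even paired with the matching lower bound, is only a bi-H\"older estimate, and bi-H\"older maps need not be quasisymmetric: in the ratio $d(f x,f a)/d(f x, f b)$ the two exponents $\alpha$ and $1/\alpha$ do not cancel, leaving a factor $d(x,b)^{\alpha-1/\alpha}$ that is not controlled by $t$ alone. The correct intermediate notion is the quasi-M\"obius (cross-ratio) estimate: the cross-ratio is a difference of Gromov products, so the multiplicative-additive distortion of Gromov products yields a power quasi-M\"obius control on cross-ratios, and a (power) quasi-M\"obius homeomorphism between bounded spaces satisfying a mild three-point normalization is (power) quasisymmetric. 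This is exactly how Buyalo--Schroeder phrase Theorem~5.2.17 (their conclusion is that $\boundary f$ is a PQ-map), so your closing sentence about reducing to their characterization is the right move---just route it through cross-ratios rather than the pointwise H\"older bounds.
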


The reverse implication of this equivalence requires some slightly technical hypotheses on both the boundary and the space itself.

\begin{defn}
\label{def:visual}
A proper $\delta$--hyperbolic space $X$ is \emph{visual} if there exists a base point $w \in X$ and a constant $C<\infty$ such that for every $x \in X$ there is a geodesic ray based at $w$ that intersects the closed ball $B(x,C)$.
We note that $X$ is visual whenever it admits a cocompact group action (\emph{cf.} \cite{Paulin96}).
\end{defn}

Recall that a topological space is \emph{perfect} if it has no isolated points.

\begin{defn}
\label{def:uniformlyperfect}
A metric space $Z$ is \emph{uniformly perfect} if there exists a constant $\mu \in (0,1)$ such that for every $x \in Z$ and every $r > 0$ such that $B_r(x) \neq Z$, the set
\[
   B_r(x) \setminus B_{\mu r}(x)
\]
is nonempty.
Every connected metric space is uniformly perfect.  Furthermore uniform perfection is a quasisymmetrically invariant property among metric spaces by Tukia--V\"{a}is\"{a}l\"{a} \cite[Lem.~3.9]{TukiaVaisala80}.
\end{defn}

Uniform perfection is discussed in more detail in Section~\ref{sec:UniformlyPerfect}.

\begin{thm}[\cite{BuyaloSchroeder07}, Cor.~7.2.3]
\label{thm:QStoQI}
Let $X$ and $Y$ be proper $\delta$--hyperbolic spaces.
Suppose $X$ and $Y$ are visual and the boundaries $\boundary X$ and $\boundary Y$ are uniformly perfect.
Then each $\eta$--quasisymmetry $f\colon \boundary X \to \boundary Y$ can be extended to a $(\lambda,\epsilon)$--quasi-isometry $F\colon X \to Y$.
Any two such extensions $F$ and $F'$ satisfy $d(F,F')<\epsilon$.

The constants $\lambda$ and $\epsilon$ depend only on the hyperbolicity constant $\delta$, the choice of visual metrics on $\boundary X$ and $\boundary Y$\!, and the quasisymmetric modulus $\eta$.
\end{thm}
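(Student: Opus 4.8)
The plan is to build the extension $F$ directly from the geometry of geodesics and then read off the quasi-isometry estimates from the quasisymmetry modulus $\eta$, using the two hypotheses in complementary roles: visuality guarantees that every point of the space is captured by a boundary configuration, while uniform perfection guarantees that $f$ records enough metric information at every scale to reconstruct the ``depth'' direction. Fix basepoints $w \in X$ and $w' \in Y$ and visual metrics on $\boundary X$ and $\boundary Y$ with parameters $a, a' > 1$. Throughout I would identify a Gromov product $(\xi|\xi')_w$ with $-\log_a$ of the visual distance between $\xi$ and $\xi'$ up to a uniform additive error; this converts multiplicative statements about $\eta$ into additive statements about Gromov products, which is the natural currency for quasi-isometry estimates.

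First I would set up an encoding of points by boundary data. Given $x \in X$, visuality produces a geodesic ray $\sigma$ from $w$ with $d(x,\sigma) \le C$; let $\xi = \sigma(\infty)$ and $t = d(w,x)$. To pin $x$ down on a bi-infinite geodesic I would use uniform perfection of $\boundary X$ to select a companion point $\xi' \in \boundary X$ with $(\xi|\xi')_w$ bounded by a uniform constant, so that the line $[\xi,\xi']$ passes within $O(\delta)$ of $w$ and $x$ lies within $O(\delta + C)$ of the point of $[\xi,\xi']$ at distance $t$ from $w$ (here Proposition~\ref{prop:idealthin} controls the ideal triangle with vertices $w, \xi, \xi'$). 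The crucial feature is that uniform perfection supplies such a $\xi'$ at the correct scale relative to $\xi$, and does so for \emph{every} $x$, so no region of $X$ is invisible to the boundary. I would then define $F(x)$ to be the point on a geodesic $[f\xi, f\xi']$ in $Y$ whose position is obtained from that of $x$ by the rescaling described below.

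The heart of the argument is recording $f$ as a basepoint-free constraint and transporting it inside. A quasisymmetry is automatically quasi-Möbius, so $f$ distorts cross-ratios of boundary quadruples by a bounded multiplicative amount; by Tukia--V\"ais\"al\"a \cite{TukiaVaisala80}, uniform perfection upgrades $f$ to a power quasisymmetry, so this distortion is controlled by a single exponent $\alpha$. Under the visual-metric dictionary, the log cross-ratio of four boundary points corresponds, up to bounded error, to the separation between the two branch regions of the ideal quadrilateral $\xi_1\xi_2\xi_3\xi_4$, i.e.\ to a difference of Gromov products; hence $f$ coarsely preserves these internal separations with multiplicative constant $\alpha$ and uniform additive error. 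This is exactly the data determining $F$ on geodesics: the point assigned to $x$ lies a controlled distance along $[\xi,\xi']$, and its image lies the correspondingly rescaled distance along $[f\xi,f\xi']$. From here I would verify that $F$ is $(\lambda,\epsilon)$--coarse Lipschitz with constants depending only on $\delta$, $\eta$, and the visual parameters, that the symmetric construction applied to $f^{-1}$ yields a coarse inverse, and that coarse surjectivity holds using uniform perfection of $\boundary Y$ so every $y \in Y$ is captured by an admissible image quadruple. I expect this reconstruction of the depth direction to be the main obstacle: ratios of boundary distances alone leave the scale undetermined, and it is precisely uniform perfection, forcing the single exponent $\alpha$, that makes the transported depth coarsely well-defined and converts the quasisymmetry into genuine two-sided quasi-isometry bounds rather than a one-sided estimate.

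Finally I would identify the boundary map and prove uniqueness. Tracking $\sigma$ shows that $F$ sends a geodesic ray with endpoint $\xi$ to a quasigeodesic with endpoint $f\xi$, so $\boundary F(\xi) = f\xi$ for every $\xi$; combined with Theorem~\ref{thm:QItoQS}, which guarantees $\boundary F$ is a well-defined quasisymmetry, this gives $\boundary F = f$. For uniqueness, if $F$ and $F'$ both extend $f$, then for each $x$ the points $F(x)$ and $F'(x)$ lie on quasigeodesics to the common endpoint $f\xi$ and, by the coarse-affine transport of depth established above, at comparable distance from $w'$; two such points are within a uniform distance by $\delta$--hyperbolicity, so $d(F,F') < \epsilon$. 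This mirrors the way visuality pins a quasi-isometry down to bounded error from its boundary values, as in Paulin \cite{Paulin96}.
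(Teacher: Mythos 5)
The paper does not prove this statement at all: it is imported verbatim from Buyalo--Schroeder \cite{BuyaloSchroeder07} (Cor.~7.2.3), so there is no internal argument to compare against. Judged against the proof in the literature, your sketch reconstructs essentially the standard argument (Buyalo--Schroeder, Ch.~7; cf.\ Bonk--Schramm \cite{BonkSchramm00}): encode each $x \in X$ by boundary data via visuality, translate the quasisymmetry into additive control on Gromov products through the cross-ratio dictionary, and invoke the Tukia--V\"ais\"al\"a power upgrade \cite{TukiaVaisala80} on uniformly perfect spaces to get the two-sided exponent $\alpha$ that makes the transported depth coarsely well-defined. You correctly identify that last point as the crux, and your treatment of surjectivity (run the construction on $f^{-1}$) and of the boundary map is the standard one.

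Two fine points deserve tightening. First, your companion point $\xi'$ with $(\xi|\xi')_w$ uniformly bounded requires only $\abs{\boundary X}\ge 2$, not uniform perfection; what uniform perfection actually supplies is a scale marker $\zeta$ with $d(\xi,\zeta)\asymp a^{-t}$ for \emph{every} $t\ge 0$, and the image depth must then be defined intrinsically as $t' = -\log_{a'} d'\bigl(f\xi,f\zeta\bigr)$. This matters: if one instead rescales depth by a fixed function such as $t\mapsto \alpha t$, the map $F$ fails to be a quasi-isometry --- two points at equal depth $t=s$ lying near the branch point of the tripod over $\xi_1,\xi_2$ would have images separated by roughly $2s(\alpha - 1/\alpha)$, which is unbounded. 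Your cross-ratio formulation does resolve this, but the phrase ``correspondingly rescaled distance'' should be anchored to the marker-point definition to rule out the naive reading. Second, the uniqueness clause concerns \emph{any} two $(\lambda,\epsilon)$--quasi-isometric extensions of $f$, whereas your appeal to ``the coarse-affine transport of depth established above'' only covers extensions produced by your construction. The clean argument uses the marker point again to exhibit $x$ as a $K$--approximate center of the ideal triangle $(\xi,\xi',\zeta)$ --- precisely the mechanism of Proposition~\ref{prop:UniformlyPerfect} --- and observes that any quasi-isometric extension inducing $f$ must carry $x$ to within uniformly bounded distance of the coarse center of $(f\xi,f\xi',f\zeta)$, and such centers are coarsely unique by thinness of ideal triangles (Proposition~\ref{prop:idealthin}). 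With those repairs your outline is a faithful account of the cited proof.
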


\begin{defn}
\label{def:Quasiaction}
A \emph{$(\lambda,\epsilon)$--quasi-action} $\rho$ of a group $\Gamma$ on a metric space $X$ associates to each $\gamma \in \Gamma$ a $(\lambda,\epsilon)$--quasi-isometry $\rho(\gamma) \colon X \to X$ such that 
the supremum distance between the functions $\rho(\gamma_1) \of \rho(\gamma_2)$ and $\rho(\gamma_1 \gamma_2)$ is less than $\epsilon$ for all $\gamma_1,\gamma_2 \in \Gamma$.
\end{defn}

Quasi-isometric conjugacy of two quasi-actions, defined below, formalizes the intuitive idea of a function that is equivariant ``up to a bounded error.'' 
See Tukia \cite{Tukia94} or Eisenberg \cite{Eisenberg_thesis} for more background on quasi-actions.

\begin{defn}
\label{def:roughequivariance}
Let $\rho$ and $\rho'$ be quasi-actions of $\Gamma$ on $X$ and $X'$ respectively, and let $f\colon X \to X'$ be a quasi-isometry.  Then $\rho$ is \emph{quasi-isometrically conjugate} to $\rho'$ via $f$ if there is a constant $\nu$ such that $d \bigl( f \of \rho(\gamma), \rho'(\gamma) \of f \bigr) < \nu$ for all $\gamma \in \Gamma$. If $f$ quasi-isometrically conjugates two given quasi-actions, then $f$ is $\nu$--\emph{roughly equivariant}. A quasi-isometry is \emph{roughly equivariant} if it is $\nu$--roughly equivariant for some number $\nu$.
\end{defn}

%%%%%%%%%%%%%%%%%%%%%%%%%%%%%%%%%%%%%%
\section{Boundaries and convergence groups}
\label{sec:ConvergenceGroup}
%%%%%%%%%%%%%%%%%%%%%%%%%%%%%%%%%%%%%%

When a group $\Gamma$ acts continuously, properly, and isometrically on a proper $\delta$--hyperbolic space $X$, the induced action on the boundary has strong topological properties, formalized in the notion of a convergence group action.
This section reviews the basic theory of convergence group actions.  See Tukia \cite{Tukia94} for more details.
%In Appendix~\ref{sec:Conical}, quasi-actions are examined and related to convergence groups.

%\begin{defn}
%\label{def:Convergence}
%Let $M$ be a compact, metrizable space with at least three points.  A \emph{convergence group action} of $\Gamma$ on $M$ is a continuous action by homeomorphisms such that the induced action on the space of distinct triples of $M$ is proper.
%\end{defn}

\begin{defn}[Convergence groups]
Let $M$ be a compact, metrizable space containing at least three points.
A \emph{convergence group action} of $\Gamma$ on $M$ is an action by homeomorphisms such that for any sequence $(\gamma_i)$ of distinct elements in $\Gamma$
there is a subsequence $(\gamma_{n_i})$ such that either there is an element $\gamma \in \Gamma$ such that the homeomorphisms converge $\gamma_{n_i} \to \gamma$ uniformly on $M$, or there exist points $\zeta,\xi \in M$ such that
\[
   \gamma_{n_i} \big| \bigl( M\setminus\{\zeta\}\bigr) \to \xi
\]
uniformly on compact sets.
In the second case, such a subsequence is a \emph{collapsing subsequence} with \emph{attractive point} $\xi$ and satisfies the equivalent condition that
\[
   \gamma^{-1}_{n_i} \big| \bigl( M\setminus\{\xi\}\bigr) \to \zeta
\]
uniformly on compact sets.
The \emph{limit set} of the action is the set of all attractive points of collapsing sequences.
A convergence group is \emph{discrete} if every sequence of distinct elements contains a collapsing subsequence.
\end{defn}

%Bowditch and Gerasimov have studied a more general notion of convergence group involving actions on continua that are not necessarily metrizable \cite{Bowditch99ConvergenceGroups,Gerasimov09_Expansive}. All convergence groups in this paper involve only metrizable spaces.  In order to guard the sanity of the reader, we refrain from discussing that more general setting here.

\begin{defn}[Conical limit]
\label{def:ConicalLimit}
Suppose $\Gamma$ has a convergence group action on $M$.  A point $\zeta \in M$ is a \emph{conical limit point} if there exists a sequence $(\gamma_i)$ in $\Gamma$ and a pair of distinct points $\xi_0\ne\xi_1 \in M$ such that
\[
   \gamma_i \big| \bigl( M - \{\zeta\}\bigr) \to \xi_0
   \qquad \text{and} \qquad
   \gamma_i(\zeta) \to \xi_1.
\]
\end{defn}

\begin{defn}[Loxodromic]
\label{def:Loxodromic}
Suppose $\Gamma$ has a convergence action on $M$.
An infinite order element $\gamma$ is \emph{loxodromic} if $\langle \gamma \rangle$ has exactly two limit points.
\end{defn}

\begin{defn}[Parabolic]
A subgroup $P \le \Gamma$ of a convergence group is \emph{parabolic} if it has a unique limit point, called a \emph{parabolic point}.
A parabolic point $\eta$ is \emph{bounded parabolic} if $\Stab(\eta)$ acts cocompactly on $(M \setminus \{\eta\})$.
If $\Gamma$ is a discrete convergence group, then the stabilizer of a parabolic point is a maximal parabolic subgroup.
\end{defn}

\begin{defn}[Geometrically finite]
A discrete convergence group action on $M$ is \emph{geometrically finite} if every point of $M$ is either a conical limit point or a bounded parabolic point.
\end{defn}

As mentioned above, convergence groups often arise as the boundary actions of isometric actions (or quasi-actions) on $\delta$--hyperbolic spaces by the following result of Tukia (\emph{cf.}\ \cite{Freden95}).

%\begin{thm}[\cite{Tukia94,Freden95}]
%\label{thm:DiscreteConvergence}
%Suppose a discrete group $\Gamma$ acts isometrically on a proper $\delta$--hyperbolic space $X$.  The induced action on $\boundary X$ is a discrete convergence group action if and only if the action on $X$ is proper.
%\end{thm}

\begin{thm}[\cite{Tukia94}, Thm.~3E]
\label{thm:ConvergenceGroup}
Let $X$ be a proper $\delta$--hyperbolic space $X$ such that $\boundary X$ contains at least three points.
Every quasi-action of a group $\Gamma$ on $X$ induces a convergence group action on $\boundary X$.
A sequence $(\gamma_i)$ in $\Gamma$ has a collapsing subsequence if and only if $\bigl\{\gamma_i(x)\bigr\}$ is an unbounded set for some $x \in X$.

In particular, a quasi-action by a discrete group is proper if and only if the induced convergence group action is discrete.
Furthermore, an isometric action on $X$ induces a convergence group action on $X \cup \boundary X$.
\end{thm}

When $\Gamma$ acts isometrically on a proper $\delta$--hyperbolic space $X$, the dynamical properties defined above all have simple geometric interpretations in terms of $\delta$--hyperbolic geometry, as explained below.

The notion of conical limit has several equivalent forms throughout the literature. 
See Appendix~\ref{sec:Conical} for a discussion of various equivalent definitions for conical limit points in terms of convergence groups and quasi-actions on $\delta$--hyperbolic spaces.

An isometry $\gamma$ of a proper $\delta$--hyperbolic space $X$ is \emph{loxodromic} if the induced map on the boundary is loxodromic or equivalently if for each $x \in X$ the orbit map $\Z \to X$ given by $n\mapsto \gamma^n(x)$ is a quasigeodesic \cite[Prop.~8.21]{GH90}.

A group $P$ acting isometrically on a proper $\delta$--hyperbolic space is \emph{parabolic} if the induced action on the boundary is parabolic.
In order to describe parabolic groups in geometric terms, we first review the notions of horofunctions and horoballs in $\delta$--hyperbolic spaces.
The following rough notion of horofunction was introduced by Bowditch \cite{Bowditch99Boundaries,BowditchRelHyp}.

Suppose $X$ is $\delta$--hyperbolic.  Let $\Delta=\Delta(x,y,z)$ be a triangle with vertices in $X \cup \boundary X$. An \emph{$L$--approximate center} of $\Delta$ is a point $w\in X$ such that the closed ball $B(w,L)$ intersects all three sides of $\Delta$. By Proposition~\ref{prop:idealthin}, each side of $\Delta$ contains a $5\delta$--approximate center.

\begin{defn}[Horofunctions]
\label{def:Horofunction}
Suppose $\eta \in \boundary X$.
A function $h \colon X \to \R$ is a \emph{horofunction about $\eta$} if
either of the following equivalent conditions holds
\begin{enumerate}
    \item
    \label{item:HorofunctionTriangle}
    There exists a constant $D_0<\infty$ such that if $\Delta(x,y,\eta)$ is a triangle with $5\delta$--approximate center $w$, then
\[
   \bigabs{\bigl( h(x) + d(x,w)\bigr) - \bigl( h(y) + d(y,w) \bigr) } < D_0.
\]
   \item
   \label{item:HorofunctionRay}
   There exists a constant $D_1<\infty$ such that if $x,w \in X$ and $d\bigl( w, [x,\xi) \bigr) \le 5\delta$ for some ray $[x,\xi)$,
   then the difference $h(w)-h(x)$ is within $D_1$ of the distance $d(x,w)$.
\end{enumerate}
\end{defn}

\begin{defn}
Let $X$ be a proper hyperbolic space.
An \emph{\textup{(}open\textup{)} horoball} about a point $\eta\in \boundary X$ is a subset $B\subset X$ that has the form $h^{-1}(-\infty,0)$ for some horofunction $h$ about $\eta$.
\end{defn}

\begin{prop}[Parabolic actions]
\label{prop:InvariantHoro}
Let $P$ be a group acting isometrically on a proper $\delta$--hyperbolic space $X$ with an unbounded orbit. The following are equivalent.
\begin{enumerate}
    \item
    \label{item:UniqueLimit}
    $P$ has a unique limit point in $\boundary X$.
    \item
    \label{item:NoLoxodromic}
    $P$ contains no loxodromic element.
    \item
    \label{item:InvariantHoro}
    There exists a $P$--invariant horofunction.
\end{enumerate}
\end{prop}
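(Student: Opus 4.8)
The plan is to fix a basepoint $x_0 \in X$ and record first that, since $P$ is infinite and acts properly on the proper space $X$, the orbit $Px_0$ is infinite and discrete and therefore accumulates in the compact space $X \cup \boundary X$; hence the limit set $\Lambda(P) \subseteq \boundary X$ (the accumulation points of $Px_0$) is nonempty. I would then prove the two ``dynamical'' implications relating (\ref{item:UniqueLimit}) and (\ref{item:NoLoxodromic}), and the two ``geometric'' implications relating (\ref{item:InvariantHoro}) to (\ref{item:UniqueLimit}). For (\ref{item:UniqueLimit})$\Rightarrow$(\ref{item:NoLoxodromic}) I argue contrapositively: a loxodromic $g \in P$ has two distinct fixed points in $\boundary X$, and these are limit points of $\gen{g} \le P$, so $\Lambda(P)$ has at least two points. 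For the converse (\ref{item:NoLoxodromic})$\Rightarrow$(\ref{item:UniqueLimit}) I invoke the classification of discrete convergence groups: by Theorem~\ref{thm:DiscreteConvergence} the $P$--action on $\boundary X$ is a discrete convergence action, and Tukia \cite{Tukia94} shows that such an action whose limit set has at least two points must contain a loxodromic element. Contrapositively, the absence of loxodromics forces $\abs{\Lambda(P)} \le 1$, so $\Lambda(P) = \{\eta\}$ is a single point fixed by all of $P$. This is the step that leans on nontrivial external input rather than on a self-contained argument.

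For (\ref{item:UniqueLimit})$\Rightarrow$(\ref{item:InvariantHoro}) I construct an invariant horofunction about the unique limit point $\eta$. Fix any horofunction $h_0$ about $\eta$; since each $g \in P$ is an isometry fixing $\eta$, the composite $h_0 \of g$ is again a horofunction about $\eta$, and any two horofunctions about the same point differ by a coarsely constant amount, so I may write $h_0(gx) = h_0(x) + c(g) + O(1)$ with error uniform in $x$. The crucial step is to bound the cocycle $c$ uniformly. Summing this coarse identity along the cyclic orbit $n \mapsto g^n x_0$ and using that horofunctions are coarsely $1$--Lipschitz, I obtain for $m > n$ a lower bound $d(g^m x_0, g^n x_0) \ge (m-n)\bigl( \abs{c(g)} - \epsilon_0 \bigr) - O(1)$ together with the triangle-inequality upper bound $d(g^m x_0, g^n x_0) \le (m-n)\,d(x_0, g x_0)$. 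Were $\abs{c(g)}$ to exceed the uniform error $\epsilon_0$, these bounds would make the orbit map $\Z \to X$ a quasigeodesic, whence $g$ would be loxodromic with two distinct endpoints in $\Lambda(P)$, contradicting $\Lambda(P) = \{\eta\}$. Thus $\abs{c(g)} \le \epsilon_0$ for all $g$, and $h_0$ is coarsely $P$--invariant. To upgrade this to an honestly invariant horofunction I average by an infimum: set $h(x) = \inf_{g \in P} h_0(g x)$. The uniform bound on $c$ gives $h_0 - 2\epsilon_0 \le h \le h_0$, so $h$ lies within bounded distance of $h_0$ and is therefore itself a horofunction about $\eta$, while reindexing the infimum yields $h(g' x) = h(x)$ for every $g' \in P$. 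I expect this cocycle-bounding step to be the main obstacle.

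Finally, for (\ref{item:InvariantHoro})$\Rightarrow$(\ref{item:UniqueLimit}), let $h$ be a $P$--invariant horofunction about $\eta$ and suppose $\zeta \in \Lambda(P)$ is realized by a sequence $g_i x_0 \to \zeta$. Invariance gives $h(g_i x_0) = h(x_0)$, a bounded sequence. On the other hand, a horofunction about $\eta$ differs by a bounded amount from $\pm\bigl( d(x_0, \cdot) - 2(\,\cdot \mid \eta)_{x_0} \bigr)$ --- a comparison that follows from Definition~\ref{def:Horofunction} applied to the approximate centers of the triangles $\Delta(x_0, \cdot, \eta)$, using Proposition~\ref{prop:idealthin} --- and such a function has values tending to $\pm\infty$ along any sequence converging to a boundary point other than $\eta$, since then $(g_i x_0 \mid \eta)_{x_0}$ stays bounded while $d(x_0, g_i x_0) \to \infty$. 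Boundedness of $h(g_i x_0)$ therefore forces $\zeta = \eta$, so $\Lambda(P) \subseteq \{\eta\}$; combined with $\Lambda(P) \ne \emptyset$ this gives $\Lambda(P) = \{\eta\}$. Together with the dynamical equivalence above, this proves that all three conditions are equivalent.
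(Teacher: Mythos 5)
Your proof is correct, and while it shares the paper's overall skeleton---citing Tukia for the equivalence of (\ref{item:UniqueLimit}) and (\ref{item:NoLoxodromic}), and producing the invariant horofunction as $\bar h(x)=\inf_{p\in P}h_0\bigl(p(x)\bigr)$ once $h_0$ is known to be coarsely $P$--invariant---the way you obtain the central estimate is genuinely different. The paper bounds $h_0(x)-h_0\bigl(p(x)\bigr)$ by choosing an auxiliary element $p'$ with larger displacement and larger Gromov product with $\eta$, invoking the criterion for non-loxodromic isometries from \cite[Lem.~9.2.3]{CDP90} to get $d\bigl(x,p(x)\bigr)<2\bigl(p(x)\big|\eta\bigr)_x+8\delta$, and then evaluating the horofunction at an approximate center of $\Delta\bigl(x,p(x),\eta\bigr)$. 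You instead treat $h_0\circ g-h_0$ as a coarse cocycle $c(g)+O(1)$ and telescope along $\gen{g}$--orbits: a drift $\abs{c(g)}>\epsilon_0$ forces $n\mapsto g^n x_0$ to be a quasigeodesic, making $g$ loxodromic and contradicting (\ref{item:UniqueLimit}). This is more self-contained (it replaces the \cite{CDP90} lemma by the quasigeodesic characterization of loxodromics already quoted in Section~\ref{sec:ConvergenceGroup}) and gives a two-sided bound where the paper needs only one side. You also prove (\ref{item:InvariantHoro})$\Rightarrow$(\ref{item:UniqueLimit}) directly, by comparing $h$ with the Busemann-type function $2(\cdot\,|\eta)_{x_0}-d(x_0,\cdot)$ and noting that $h$ is constant on orbits yet unbounded along sequences converging to any $\zeta\ne\eta$, whereas the paper proves (\ref{item:InvariantHoro})$\Rightarrow$(\ref{item:NoLoxodromic}) by showing a loxodromic orbit stays in a tube about a line. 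One point you should make explicit: the $O(1)$ error in $h_0(gx)=h_0(x)+c(g)+O(1)$ must be uniform in $g$ as well as in $x$ for the final bound $h_0-2\epsilon_0\le \bar h\le h_0$; this holds because each $g$ is an isometry fixing $\eta$, so $h_0\circ g$ satisfies Definition~\ref{def:Horofunction}(\ref{item:HorofunctionTriangle}) with the same constant $D_0$ as $h_0$, and the oscillation of the difference of two horofunctions about $\eta$ is bounded by the sum of their constants.
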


For an analogous characterization in different terms, see \cite[\S 4]{Manning08} or \cite[\S 3.C]{CapraceCornulierMonodTessera15_Amenable}.

\begin{proof}
Conditions (\ref{item:UniqueLimit}) and~(\ref{item:NoLoxodromic}) are equivalent, for instance, by \cite[\S 2]{Tukia94}.
Let us see that (\ref{item:InvariantHoro}) implies (\ref{item:NoLoxodromic}).  Suppose $P$ contains a loxodromic element $g$. Then $\boundary X$ must contain at least two points.
Choose any $x \in X$, and let $\xi \in \boundary X$ denote the limit point of the quasi-geodesic ray $\bigset{g^n(x)}{n \geq 0}$. Let $\zeta \in \boundary X$ be any point distinct from $\xi$. The quasi-geodesic ray determined by $g^n(x)$ is within a bounded Hausdorff distance of a geodesic line $c$ from $\zeta$ to $\xi$
by stability of quasigeodesics and thin triangles.
Therefore the sequence $g^n(x)$ lies in a tubular neighborhood of the geodesic $(\zeta,\xi)$. It follows by Definition~\ref{def:Horofunction}(\ref{item:HorofunctionTriangle}) that $g$ does not leave invariant any horofunction based at either $\xi$ or $\zeta$.

To complete the proof, we show that (\ref{item:UniqueLimit}) and (\ref{item:NoLoxodromic}) together imply (\ref{item:InvariantHoro}).
Suppose $P$ contains no loxodromic and has unique limit point $\eta \in \boundary X$.
Let $x \in X$ and $p \in P$ be arbitrary.
Let $(p_1,p_2,\dots)$ be any sequence in $P$ such that $d\bigl( x,p_i(x) \bigr) \to \infty$.
By Theorem~\ref{thm:ConvergenceGroup}, any subsequence of $(p_i)$ has a further collapsing subsequence, whose attractive point must equal $\eta$ by (\ref{item:UniqueLimit}).
It follows that $p_i(x) \to \eta$. Thus we have $\bigl( p_i(x) \big| \eta \bigr)_x \to \infty$ by \cite[Lem.~4.6(2)]{ABC91}.
Thus there exists $p'\in P$ such that $d\bigl( x,p'(x) \bigr) > d\bigl( x,p(x) \bigr)$ and $\bigl( p'(x) \big| \eta \bigr)_x > \bigl( p(x) \big| \eta \bigr)_x + \delta$.
Since $p,p'$, and $pp'$ are not loxodromic, it follows from \cite[Lem.~9.2.3]{CDP90} and the Gromov inequality that
\[
   d \bigl( x,p(x) \bigr) < 2 \bigl( p(x) \big| p'(x) \bigr)_x + 6\delta
   \le 2 \bigl( p(x) \big| \eta \bigr)_x + 8\delta.
\]
By \cite[Lem.~8.1]{GH90}, there exists a horofunction $h$ about $\eta$.
Definition~\ref{def:Horofunction}(\ref{item:HorofunctionTriangle}) implies that
\[
   h(x) - h\bigl( p(x) \bigr) \le d\bigl( p(x),w \bigr) - d(x,w) + D_0
   = d\bigl( x,p(x) \bigr) - 2\,d(x,w) + D_0
\]
if $w$ is any $5\delta$--approximate center of any triangle $\Delta$ with vertices $x$, $p(x)$, and $\eta$ such that $w$ lies on a geodesic $\bigl[ x,p(x) \bigr]$.
Consider the point $w \in \bigl[ x,p(x) \bigr]$ with $d(x,w) = \bigl( p(x) \big| \eta \bigr)_x$.
By \cite[Lem.~4.6(3)]{ABC91}, there exists a sequence $z_i \in X$ such that $z_i \to \eta$ and $\bigl( p(x) \big| z_i \bigr)_x \to \bigl( p(x) \big| \eta \bigr)_x$.
Passing to a subsequence if necessary, the point $w$ lies within a distance at most $5\delta$ from rays $[x,\eta)$ and $\bigl[ p(x),\eta \bigr)$ obtained as limits of the geodesic segments $[x,z_i]$ and $\bigl[p(x),z_i\bigr]$.
It follows that $w$ is a $5\delta$--approximate center of a triangle $\Delta$ with vertices $x$, $p(x)$, and $\eta$. In particular, we see that
\[
   h(x) - h\bigl( p(x) \bigr)
    \le d\bigl( x,p(x) \bigr) - 2\bigl( p(x) \big| \eta \bigr)_x + D_0
    < 8\delta + D_0.
\]
It follows that the $P$--invariant function given by $\bar{h}(x) = \inf_{p\in P} h\bigl( p(x) \bigr)$ is within a finite distance of the horofunction $h$ and thus is also a horofunction.
\end{proof}

%%%%%%%%%%%%%%%%%%%%%%%%%%%%%%%%%%%%%%%
\section{Relatively hyperbolic groups}
\label{sec:RelHyp}
%%%%%%%%%%%%%%%%%%%%%%%%%%%%%%%%%%%%%%%%

This section reviews the definition of relative hyperbolicity in terms of cusp-uniform actions on $\delta$--hyperbolic spaces and an equivalent characterization in terms of the convergence action on the boundary. We also review a combinatorial construction due to Groves--Manning, inspired by an earlier construction of Cannon--Cooper, of a space admitting a cusp-uniform action.

A \emph{group pair} $(\Gamma,\mathbb{P})$ consists of a group $\Gamma$ and a finite family $\mathbb{P}$ of subgroups.

\begin{defn}
\label{def:relhyp}
Let $\GaP$ be a group pair and $X$ a proper, visual $\delta$--hyperbolic space.
A \emph{cusp-uniform action} of $\GaP$ on $X$ consists of a properly discontinuous, isometric action of $\Gamma$ on $X$ with the following properties.
The family of subgroups $\mathbb{P}$ is a set of representatives of the conjugacy classes of maximal parabolic subgroups.
There exists a $\Gamma$--equivariant family of disjoint open horoballs with union $U$ such that the induced action of $\Gamma$ on $X - U$ is cocompact.
The space $X$ is a \emph{weak cusped space} for the pair $\GaP$, and $X-U$ is a corresponding \emph{truncated space}.
\end{defn}

The stronger notion of a ``cusped space'' for $\GaP$ is often useful for the study of quasi-isometries and metrics on the boundary of $X$, in the case that $\Gamma$ is finitely generated.  This notion is introduced below in Section~\ref{sec:HoroDistortion}.

We also remark that if the condition in Definition~\ref{def:relhyp} holds for a non-visual space $X$, then it also holds for a visual space $Y \subseteq X$ (see \cite{BowditchRelHyp}).
We only consider cusp-uniform actions on visual spaces in this paper.

The following result, due to Bowditch, shows that the notions of cusp uniform and geometrically finite actions coincide.

\begin{thm}[\cite{BowditchRelHyp}]
\label{thm:CuspUiffGeomFin}
Let $\GaP$ be a group pair such that $\Gamma$ acts properly discontinuously and isometrically on a proper, visual $\delta$--hyperbolic space $X$.
The action of $\GaP$ on $X$ is cusp uniform
if and only if the induced action of $\Gamma$ on $\partial X$ is geometrically finite and $\mathbb{P}$ is a set of representatives of the conjugacy classes of maximal parabolic subgroups of the action.
\end{thm}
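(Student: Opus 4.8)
The plan is to prove the two implications separately, using as a dictionary the correspondence between horoballs and parabolic points furnished by Proposition~\ref{prop:InvariantHoro} together with the geometric meaning of conical limit points. Since the action on $X$ is proper, Theorem~\ref{thm:DiscreteConvergence} guarantees that the induced action on $\partial X$ is a discrete convergence group action, so the notions of conical limit point, bounded parabolic point, and geometric finiteness are all available; moreover in both characterizations ``maximal parabolic subgroup'' refers to the same object, namely the stabilizer of a parabolic point, so matching up the family $\mathbb{P}$ is automatic once the equivalence of the geometric conditions is established.

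For the forward direction, assume the action is cusp uniform with equivariant family of disjoint open horoballs $\{B_\alpha\}$, union $U$, and cocompact quotient $(X-U)/\Gamma$. Each $B_\alpha$ is a horoball about a center $\eta_\alpha \in \partial X$, and its (infinite) stabilizer fixes $\eta_\alpha$ and preserves the defining horofunction; since it therefore contains no loxodromic, Proposition~\ref{prop:InvariantHoro} shows it is parabolic with unique limit point $\eta_\alpha$. I would then show each $\eta_\alpha$ is bounded parabolic by transporting the cocompact action of $\Gamma$ on $X-U$ to a cocompact action of $\Stab(\eta_\alpha)$ on the horospherical boundary $\partial B_\alpha \cap (X-U)$, which a visual, limiting argument identifies coarsely with $\partial X - \{\eta_\alpha\}$. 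Finally, for a point $\zeta$ that is not a center, I would take a geodesic ray $r$ from the basepoint to $\zeta$; since $\zeta$ is not a parabolic fixed point, the ray cannot remain eventually inside any single horoball, so it meets $X-U$ along a sequence $r(t_i)$ with $t_i\to\infty$, and cocompactness produces $\gamma_i\in\Gamma$ carrying $r(t_i)$ into a fixed compact set. Passing to subsequences and invoking the convergence property, the elements $\gamma_i^{-1}$ exhibit $\zeta$ as a conical limit point in the sense of Definition~\ref{def:ConicalLimit}. Combined with the standard fact that a parabolic fixed point is never conical, this shows every boundary point is either conical or bounded parabolic.

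For the backward direction, assume geometric finiteness. I would first record that the bounded parabolic points form finitely many $\Gamma$--orbits, a standard consequence of geometric finiteness for discrete convergence groups. For one representative $\eta$ with maximal parabolic stabilizer $P$, Proposition~\ref{prop:InvariantHoro} yields a $P$--invariant horofunction and hence a $P$--invariant horoball; spreading over the orbit and over the finitely many orbit types produces a $\Gamma$--equivariant family. Replacing each horofunction $h$ by $h-C$ for a single large constant $C$ makes the deepened horoballs pairwise disjoint, using finiteness of orbits and properness. It then remains to prove that $\Gamma$ acts cocompactly on the complement $X-U$. I would argue by contradiction: if the quotient is noncompact, properness supplies points $x_i\in X-U$ whose $\Gamma$--orbits escape every compact set, so after translating we may assume $x_i\to\zeta\in\partial X$. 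If $\zeta$ is conical, the group elements realizing conicality pull a subray toward $\zeta$ back into a compact region while keeping the corresponding deep points in $X-U$, contradicting their escape; if $\zeta$ is bounded parabolic, then $\zeta$ is the center of one of the chosen horoballs and the comparison estimate of Definition~\ref{def:Horofunction} forces all sufficiently deep points near $\zeta$ to lie inside that horoball, again a contradiction.

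The main obstacle I anticipate is the cocompactness verification in the backward direction, and specifically the bounded parabolic case: one must convert the purely topological hypothesis that $\Stab(\zeta)$ acts cocompactly on $\partial X-\{\zeta\}$ into the metric statement that every sufficiently deep point of $X$ lying near $\zeta$ is absorbed into the chosen horoball, with constants uniform across the finitely many orbit types so that a single truncation has compact quotient. This requires translating the cocompact boundary action into control on the coarse geometry of horospheres through the horofunction estimates of Definition~\ref{def:Horofunction} and the approximate centers supplied by Proposition~\ref{prop:idealthin}. By comparison, the disjointness and equivariance bookkeeping in the construction of $U$ is routine.
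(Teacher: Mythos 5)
The paper does not actually prove this statement: it is quoted from Bowditch \cite{BowditchRelHyp}, so there is no internal proof to compare against. Judged on its own, your forward direction is essentially sound (and is the easier half): horoball stabilizers are parabolic by Proposition~\ref{prop:InvariantHoro}, the cocompactness of $\Stab(\eta)$ on the horosphere does transfer to $\partial X\setminus\{\eta\}$ with some care, and your argument that a non-parabolic point is conical -- a ray to it must meet $X-U$ at times $t_i\to\infty$, then translate back by cocompactness and apply Proposition~\ref{prop:ConicalLimit} -- is the standard one and works.

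The backward direction, however, has two genuine gaps, both located where the real difficulty of Bowditch's theorem lies. First, the cocompactness-by-contradiction argument in the conical case does not go through as stated. Convergence $x_i\to\zeta$ in $X\cup\partial X$ only controls the Gromov products $(x_i\,|\,\zeta)_{x_0}$; it does not place the $x_i$ within bounded distance of any ray ending at $\zeta$, so the elements witnessing conicality of $\zeta$ need not move the $x_i$ anywhere near a compact set. Worse, the constant implicit in the conical-limit condition depends on the point $\zeta$, and there is no a priori uniformity over the (uncountably many) conical points, so even a pointwise version of your argument cannot be summed into a single compact fundamental domain. The standard repairs route the compactness through the boundary itself (e.g.\ via the action on distinct triples and approximate centers of ideal triangles, in the spirit of Proposition~\ref{prop:UniformlyPerfect}), and this is where most of the work in Bowditch's Section~6 sits. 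Second, the disjointness of the horoball family is not achieved by subtracting a single constant $C$ from each horofunction: that only works if the originally chosen horoballs have uniformly bounded pairwise overlap, and the overlap depth of horoballs about $\eta\ne\eta'$ is governed by a Gromov product that is unbounded as the pair ranges over the (typically dense) set of parabolic points. Extracting the needed uniformity from equivariance together with the bounded parabolic hypothesis is itself a nontrivial argument, not bookkeeping. (Citing the finiteness of orbits of bounded parabolic points as a known theorem of Tukia/Bowditch is acceptable, but it too is not a formality.)
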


\begin{defn}[Relatively hyperbolic]
A group pair $\GaP$ is \emph{relatively hyperbolic} if it admits an action on a $\delta$--hyperbolic space $X$ that
satisfies either of the equivalent conditions of the previous theorem.
\end{defn}

\begin{defn}[Bowditch boundary as a topological space]
\label{def:BoundaryTop}
Let $\GaP$ be relatively hyperbolic.
The \emph{Bowditch boundary} $\partial\GaP$ is the Gromov boundary $\boundary X$ of any associated weak cusped space $X$.

By a theorem of Bowditch \cite{BowditchRelHyp}, any two weak cusped spaces $X$ and $Y$ have boundaries that are $\Gamma$--equivariantly homeomorphic.  Thus the Bowditch boundary is well-defined as a topological space equipped with an action of $\Gamma$ by homeomorphisms. (See Gerasimov--Potyagailo \cite{GerasimovPotyagailo16_Similar} for the case when $\Gamma$ is not finitely generated.)
\end{defn}

\begin{rem}[Lack of well-defined conformal gauge]
We caution the reader that if $X$ and $Y$ are two weak cusped spaces $X$ and $Y$ for a relatively hyperbolic pair $\GaP$, then there need not exist a roughly equivariant quasi-isometry $X \to Y$ by a theorem of the first author \cite{Healy}.
We show in Proposition~\ref{prop:CuspUniformlyPerfect} below that any weak cusped space has a uniformly perfect boundary.
Thus it follows from Theorem~\ref{thm:QStoQI} that different weak cusped spaces may have visual metrics that are not equivariantly quasisymmetric.
Thus it does not make sense in general to define a conformal gauge on the boundary using an arbitrary weak cusped space $X$.
\end{rem}

If $(\Gamma,\mathbb{P})$ is relatively hyperbolic and $\Gamma$ is finitely generated, then each $P \in \mathbb{P}$ is finitely generated (see \cite[Thm.~1.1]{Osin06}).
In this case, a weak cusped space for $(\Gamma,\mathbb{P})$ is given by the following combinatorial construction of Groves--Manning \cite{GrovesManning08DehnFilling}. (See also \cite{CannonCooper92,Rebbechi01,BowditchRelHyp}.)

\begin{defn}
\label{def:CombiHoroball}
Let $\Upsilon$ be a graph with the simplicial metric.  The \emph{combinatorial horoball} based on $\Upsilon$ is the graph $\mathcal{H}(\Upsilon)$, also equipped with the simplicial metric,
with vertex set $\Upsilon^{(0)} \times \Z_{\ge 0}$ and with two types of edges:
\begin{itemize}
   \item Vertices $(v,n)$ and $(v,n+1)$ are joined by a \emph{vertical edge}.
   \item Vertices $(v,n)$ and $(w,n)$ are joined by a \emph{horizontal edge} if the distance $d(v,w)$ in $\Upsilon$ is at most $e^n$.
\end{itemize}
\end{defn}

A finite generating set $\mathcal{S}$ for $\Gamma$ is \emph{adapted to $\mathbb{P}$} if for each subgroup $P \in \mathbb{P}$ the set $\mathcal{S} \cap P$ generates $P$.
In this case, for each $P\in \mathbb{P}$ and each $\gamma \in \Gamma$, the vertices of the coset $\gamma P$ span a connected full subgraph of $\Cayley(\Gamma,\mathcal{S})$ that is a $\gamma$--translate of the Cayley graph $\Cayley(P,\mathcal{S}\cap P)$.

\begin{defn}[Cusped Cayley graph]
\label{def:CuspedCayley}
Let $(\Gamma,\mathbb{P})$ be a group pair, and let $\mathcal{S}$ be a finite adapted generating set for $\Gamma$.
The \emph{cusped Cayley graph} $Y(\Gamma,\mathbb{P})$ is obtained from the Cayley graph $\Cayley(\Gamma,\mathcal{S})$ by attaching, for each $P \in \mathbb{P}$ and each $\gamma\in \Gamma$, a copy of the combinatorial horoball $\mathcal{H}\bigl( \gamma \Cayley(P,\mathcal{S}\cap P) \bigr)$ by identifying the subgraph $\gamma \Cayley(P,\mathcal{S}\cap P)$ with the $n=0$ level of the horoball.
\end{defn}

Groves--Manning have shown that relative hyperbolicity may be characterized in terms of the hyperbolicity of the cusped Cayley graph, according to the following theorem.

\begin{thm}[\cite{GrovesManning08DehnFilling}]
\label{thm:grovesmanning}
Let $(\Gamma,\mathbb{P})$ be a group pair, and let $\mathcal{S}$ be a finite adapted generating set for $\Gamma$. Then $\GaP$ is relatively hyperbolic if and only if the cusped Cayley graph for $\GaP$ is hyperbolic.
\end{thm}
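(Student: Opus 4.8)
The plan is to prove the two implications by quite different means. For the backward implication I would build the cusp-uniform action on the cusped Cayley graph $Y = Y\GaP$ by hand; for the forward implication I would argue combinatorially, since a direct quasi-isometry from $Y$ to the witnessing hyperbolic space is unavailable in general.

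First suppose $Y$ is $\delta$--hyperbolic. The group $\Gamma$ acts properly and isometrically on $\Cayley(\Gamma,\mathcal{S})$ by left multiplication, and this action extends to all of $Y$ because $\Gamma$ permutes the attached combinatorial horoballs $\mathcal{H}\bigl(\gamma \Cayley(P,\mathcal{S}\cap P)\bigr)$ among themselves. The key input is the explicit distance estimate inside a combinatorial horoball (Definition~\ref{def:CombiHoroball}): the depth coordinate $(v,n) \mapsto n$ increases linearly along geodesics heading into the horoball, so the assignment $x \mapsto -\,(\text{depth of }x)$ satisfies the defining inequality of Definition~\ref{def:Horofunction}(\ref{item:HorofunctionTriangle}). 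Thus each combinatorial horoball is a genuine open horoball about a single boundary point $\eta_{\gamma P} \in \boundary Y$, its apex. I would then check that $\gamma P \gamma^{-1}$ fixes $\eta_{\gamma P}$ and contains no loxodromic element, since its orbits vary their depth boundedly and so cannot fellow-travel a bi-infinite geodesic; by Proposition~\ref{prop:InvariantHoro} it is therefore parabolic with unique limit point $\eta_{\gamma P}$.

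Next I would assemble the cusp-uniform structure. Truncating every combinatorial horoball at a uniform depth $N$ produces a $\Gamma$--equivariant family of open horoballs, and choosing $N$ large makes them pairwise disjoint, since distinct cosets $\gamma P$ have distinct apices and horoballs about distinct boundary points are eventually disjoint. The complement $Y - U$ is $\Cayley(\Gamma,\mathcal{S})$ together with uniformly bounded collars, on which $\Gamma$ acts cocompactly; hence the action on $Y$ is cusp uniform. Finally, every parabolic point is such an apex, because a boundary point not of this form is approached by geodesics that repeatedly re-enter the cocompact core and is therefore conical, so the maximal parabolic subgroups are exactly the stabilizers $\Stab(\eta_{\gamma P}) = \gamma P \gamma^{-1}$ and $\mathbb{P}$ represents their conjugacy classes. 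This exhibits $\GaP$ as relatively hyperbolic in the sense of Definition~\ref{def:relhyp}.

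For the converse, suppose $\GaP$ is relatively hyperbolic, witnessed by a cusp-uniform action on a proper $\delta$--hyperbolic space $X$, and I must deduce that $Y$ is hyperbolic. The tempting route of transporting hyperbolicity along a $\Gamma$--equivariant quasi-isometry $Y \to X$ is exactly the step that fails here: the combinatorial horoballs of $Y$ have constant horospherical distortion, whereas the horoballs of an arbitrary $X$ need not, so $Y$ and $X$ need not be quasi-isometric. I expect this mismatch to be the main obstacle, and it forces the argument to be combinatorial rather than geometric. Instead I would pass through the coned-off Cayley graph: by Bowditch's equivalence between this geometric notion of relative hyperbolicity and the fineness of the coned-off Cayley graph \cite{BowditchRelHyp}, equivalently the linear relative isoperimetric inequality of Osin \cite{Osin06}, one obtains a combinatorial hypothesis on $\GaP$. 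One then verifies directly, using the combinatorial horoball distance formula to control disc diagrams that descend into and climb out of horoballs, that geodesic triangles in $Y$ are uniformly thin, so $Y$ is $\delta'$--hyperbolic; this is the content of Groves--Manning's argument \cite{GrovesManning08DehnFilling}. Reconciling the abstract input coming from $X$ with this explicit estimate on $Y$ is the technical heart of the proof.
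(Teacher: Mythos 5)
This theorem is stated in the paper purely as a citation to \cite{GrovesManning08DehnFilling}; the paper contains no proof of its own, so there is no internal argument to compare against. Your sketch is a faithful outline of the standard argument from the cited source, and the overall logic is sound. The backward direction is the part you actually work out: exhibiting each combinatorial horoball as a genuine horoball via the depth function, showing the peripheral subgroups are parabolic, and verifying the remaining boundary points are conical so that $\mathbb{P}$ exhausts the maximal parabolics. Two small points there: the deep parts (depth $\ge 1$) of the combinatorial horoballs are pairwise disjoint by construction, since they are attached as separate copies glued to $\Cayley(\Gamma,\mathcal{S})$ only along their depth-$0$ levels, so no choice of a large truncation depth $N$ is needed; and the claim that $\gamma P\gamma^{-1}$ contains no loxodromic is best justified quantitatively --- an orbit confined to a horosphere has $d_Y(x,p^nx)$ growing only logarithmically in the word length of $p^n$ by the combinatorial horoball distance formula, so $n\mapsto p^nx$ cannot be a quasigeodesic --- rather than by the vaguer statement that bounded depth prevents fellow-travelling a bi-infinite geodesic. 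For the forward direction you correctly identify why a direct quasi-isometry $Y\to X$ is unavailable and correctly name the route (pass to Bowditch's or Osin's combinatorial characterization \cite{BowditchRelHyp,Osin06}, then establish a linear isoperimetric inequality for $Y$ by analyzing disc diagrams through the horoballs), but you do not carry it out; that reduction and the diagram analysis are the entire technical content of the hard implication, and your proposal defers them wholesale to \cite{GrovesManning08DehnFilling}. Since the paper does exactly the same, this is acceptable as a reading of the theorem, but it should be understood that your text is an annotated pointer to the proof rather than a self-contained one.
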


%%%%%%%%%%%%%%%%%%%%%%%%%%%%%%%%%%%%%%%%%%%%%%%%%
\section{Bounded parabolic horoballs}
\label{sec:BoundedParabolic}
%%%%%%%%%%%%%%%%%%%%%%%%%%%%%%%%%%%%%%%%%%%%%%%%%

In this section, we introduce the notion of a bounded parabolic action on a $\delta$--hyperbolic space, which describes the way peripheral subgroups act on their corresponding horoballs in the context of cusp uniform actions by relatively hyperbolic pairs. We also prove some lemmas about the geometry of horoballs which admit these actions, which will be used in later sections.

We begin by observing that visual $\delta$--hyperbolic spaces satisfy the following ``tautness'' property.

\begin{lem}
\label{lem:VisualIffTaut}
Let $X$ be a proper $\delta$--hyperbolic space such that $\abs{\boundary X}\ge 2$. Then the space $X$ is visual if and only if there exists $C<\infty$ such that for any point $x \in X \cup \boundary X$ and any point $y \in X$ there exists a geodesic $c$ emanating from $x$ such that $d( y, c )< C$. 
\end{lem}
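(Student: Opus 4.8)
The plan is to prove the two implications separately, with essentially all the content in one direction. The forward implication is immediate: assuming tautness with constant $C$, fix any basepoint $w\in X$ and apply the tautness condition with $x=w$; this shows that for every $y\in X$ there is a geodesic ray emanating from $w$ passing within $C$ of $y$, which is exactly the definition of visual. So I would concentrate on the reverse implication. Assume $X$ is visual with basepoint $w$ and constant $C_0$. The case $x\in\boundary X$ is trivial: for any $y\in X$ the ray $[y,x)$ from $y$ to the ideal point $x$ emanates from $x$ and passes through $y$, so any $C>0$ suffices. The real content is the case $x\in X$, where I must aim a ray from an arbitrary interior point $x$ to pass uniformly close to an arbitrary $y\in X$, using only that rays from the single basepoint $w$ coarsely fill $X$.

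The heart of the proof is the following sub-claim: there is a uniform constant $C_1$ (depending only on $\delta$, on $w$, and on a fixed pair of boundary points) such that every $x\in X$ admits a geodesic ray emanating from $x$ that passes within $C_1$ of $w$. This is precisely where $\abs{\boundary X}\ge 2$ enters, since it fails for $X=[0,\infty)$. To prove it I would fix once and for all distinct points $\xi_1\ne\xi_2\in\boundary X$ and set $T=(\xi_1|\xi_2)_w<\infty$. The Gromov inequality gives $\min\{(\xi_1|x)_w,(\xi_2|x)_w\}\le T+\delta$, so one of them, say $\xi_1$, satisfies $(\xi_1|x)_w\le T+\delta$. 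For points $v$ far out along the ray $[w,\xi_1)$ the product $(x|v)_w$ then stays bounded by $K:=T+3\delta$; since the distance from $w$ to a geodesic $[x,v]$ is comparable to $(x|v)_w$ up to $O(\delta)$, each such segment $[x,v]$ passes within $K+4\delta$ of $w$. Letting $v\to\xi_1$ and extracting an Arzel\`a--Ascoli limit of the segments $[x,v]$ produces a genuine ray from $x$ still passing within $C_1:=K+4\delta$ of $w$, because the nearest points to $w$ remain in a bounded region and survive the limit.

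With the sub-claim established I would finish using thin ideal triangles. Given $x,y\in X$, visuality supplies a ray $[w,\eta)$ and a point $q$ on it with $d(q,y)<C_0$. Applying Proposition~\ref{prop:idealthin} to the ideal triangle with vertices $x$, $w$, $\eta$ (choosing its $w$--$\eta$ side to be this very ray), $q$ lies within $5\delta$ of $[x,w]\cup[x,\eta)$. If $q$ is $5\delta$--close to $[x,\eta)$, then the ray $[x,\eta)$ passes within $C_0+5\delta$ of $y$ and we are done. Otherwise $y$ lies within $C_0+5\delta$ of the segment $[x,w]$; here I invoke the sub-claim to obtain a ray $r_x$ from $x$ within $C_1$ of $w$. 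Since $[x,w]$ and the initial subsegment of $r_x$ share the endpoint $x$ and have terminal points within $C_1$ of one another, $\delta$--thinness forces $[x,w]$ into the $(C_1+\delta)$--neighborhood of $r_x$, so $r_x$ passes within $C_0+5\delta+C_1+\delta$ of $y$. Taking $C$ to be the maximum of the constants from the two subcases completes the proof.

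The main obstacle is the sub-claim. Visuality only furnishes rays emanating from the single basepoint $w$, and one cannot simply reverse such a ray: the ray $[x,\eta)$ to the same ideal point $\eta$ fellow-travels the far end of $[w,\eta)$ and heads \emph{away} from $w$. The mechanism that overcomes this is the second boundary point, which guarantees that not all far-away directions seen from $w$ point toward $x$, so that $w$ can be coarsely straddled by a segment $[x,v]$ with $v$ escaping to infinity; the limiting ray of these segments is the desired ray from $x$ aimed back through $w$.
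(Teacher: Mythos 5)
Your argument is correct, but it takes a genuinely different route from the paper's. The paper first shows that every point of $X$ lies uniformly close to a bi-infinite geodesic line: the union $H$ of all lines is nonempty since $\abs{\boundary X}\ge 2$, every geodesic ray emanating from a point near $H$ stays uniformly close to $H$, and visuality then places all of $X$ in a bounded neighborhood of $H$. With a line $\ell_y$ near $y$ in hand, the $5\delta$--thin ideal triangle having $\ell_y$ as a side and $x$ as the opposite vertex forces $y$ close to one of the two sides emanating from $x$; this handles $x\in X$ and $x\in\boundary X$ in a single stroke. You instead cone from $x$ over the visual ray $[w,\eta)$ and split into cases according to which side of the triangle with vertices $x$, $w$, $\eta$ absorbs $y$, rescuing the bad case (where $y$ falls near $[x,w]$) with your sub-claim that every $x$ admits a ray passing uniformly close to the basepoint $w$. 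That sub-claim is where $\abs{\boundary X}\ge 2$ enters for you, via the Gromov inequality and an Arzel\`{a}--Ascoli limit, and it plays the same structural role as the paper's reduction to the line $\ell_y$. Your version is more explicitly quantitative and proves details the paper only asserts (quasiconvexity of $H$ and the absorption of rays emanating from it), at the cost of a case split and a separate treatment of ideal $x$; both treatments agree that the implication from the tautness condition to visuality, and the case $x\in\boundary X$, are immediate.
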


\begin{proof}
The reverse implication is trivial, so we focus on the forward implication. The union $H$ of all geodesic lines in $X$ is nonempty by hypothesis.  Note that $H$ is quasiconvex in $X$, and every geodesic ray emanating from a point of $H$ lies uniformly close to $H$.  Suppose $w$ is a basepoint as in Definition~\ref{def:visual}, and let $D$ be the distance from $w$ to $H$.  Since every geodesic ray emanating from a point in the $D$--neighborhood of $H$ lies uniformly close to that neighborhood, it follows that each $y\in X$ is uniformly close to the $D$--neighborhood of $H$.
We have thus shown that every point $y \in X$ is uniformly close to a geodesic line $\ell_y$.
If $x$ is an endpoint of $\ell_y$, we are done.
Otherwise the result follows by applying Proposition~\ref{prop:idealthin} to a triangle with $\ell_y$ as a side and with $x$ as a vertex.
\end{proof}

In the presence of a cusp-uniform action, the horoballs centered at a parabolic point have the following structure.

\begin{defn}[Bounded parabolic horoballs]
\label{def:BoundedParabolic}
Suppose $Y$ is a proper $\delta$--hyperbolic space such that the Gromov boundary $\boundary Y$ is a single point $\eta$.
Suppose a group $P$ acts isometrically on $X$.  
Fix a basepoint $y_0 \in Y$, and choose a geodesic ray $c\colon [0,\infty) \to Y$ from $y_0$ to $\eta$.
The action of $P$ on $Y$ is \emph{bounded parabolic} if $Y$ lies in a finite metric neighborhood of the union of all translates $pc$ for $p \in P$.
We note that the bounded parabolic property does not depend on the choice of basepoint $y_0$ or the choice of ray $c$.
\end{defn}

\begin{rem}
\label{rem:inducedhoroball}
Let $X$ be a $\delta$--hyperbolic space admitting a cusp-uniform action by $(\Gamma,\mathbb{P})$. Let $B$ be any element of the family of equivariant disjoint horoballs as in Definition~\ref{def:relhyp}. 
If $P \in \mathbb{P}$ stabilizes $B$, then the action of $P$ on $B$ is bounded parabolic by Lemma~\ref{lem:VisualIffTaut} together with a result of Bowditch \cite[Lem.~6.3]{BowditchRelHyp}.
\end{rem}

We will use $\approx_C$ to indicate an approximate equality that holds with a possible additive error of at most $C$.

\begin{lem}
\label{lem:AsynchToSynch}
Let $X$ be a hyperbolic space which admits a bounded parabolic action by a group $P$; label the unique boundary point $\eta$. There exists a function $D = D(L)$ such that if $c$ is any geodesic ray tending to $\eta$ such that the point $c(t)$ is within a distance at most $L$ of the ray $pc$ for some $p \in P$, then $c(t)$ is within a distance of $D$ of the point $pc(t)$.
\end{lem}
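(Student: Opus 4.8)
The plan is to measure progress along rays toward $\eta$ with a single $P$--invariant horofunction, and to show that being close to the \emph{set} $pc$ forces being close to the \emph{point} $pc(t)$ at the matching parameter. First I would invoke Proposition~\ref{prop:InvariantHoro}: since $\boundary X = \{\eta\}$ and $P$ acts properly with unique limit point $\eta$, there is a $P$--invariant horofunction $h$ about $\eta$ (in the degenerate case that $P$ is finite one instead averages an arbitrary horofunction about $\eta$ over $P$). Two routine facts about $h$ would then be extracted from Definition~\ref{def:Horofunction}(\ref{item:HorofunctionTriangle}). First, $h$ is coarsely $1$--Lipschitz: applying the triangle condition to $\Delta(a,b,\eta)$ with a $5\delta$--approximate center $w\in[a,b]$ (which exists by Proposition~\ref{prop:idealthin}) gives $h(a)-h(b)\approx_{D_0} d(b,w)-d(a,w)$, and since $w$ lies on $[a,b]$ we have $\bigabs{d(b,w)-d(a,w)}\le d(a,b)$, so $\bigabs{h(a)-h(b)}\le d(a,b)+D_0$.

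Second, $h$ grows with unit slope along any geodesic ray $\gamma$ tending to $\eta$. For $0\le t\le t'$, form the triangle $\Delta\bigl(\gamma(t),\gamma(t'),\eta\bigr)$ whose sides $[\gamma(t),\eta)$ and $[\gamma(t'),\eta)$ are chosen to be subrays of $\gamma$; then $\gamma(t')$ lies on all three sides and is an approximate center, so the triangle condition yields $h\bigl(\gamma(t')\bigr)-h\bigl(\gamma(t)\bigr)\approx_{D_0} t'-t$, hence $h\bigl(\gamma(u)\bigr)\approx_{D_0} u+h\bigl(\gamma(0)\bigr)$ for all $u\ge 0$. Applying this to $c$ and to the translated ray $pc$ (which also tends to $\eta$, since $p$ fixes $\eta$), and using $P$--invariance to equate base values, I would record
\[
   h\bigl(c(u)\bigr)\approx_{D_0} u+h(y_0),
   \qquad
   h\bigl(pc(u)\bigr)\approx_{D_0} u+h\bigl(pc(0)\bigr)=u+h(p y_0)=u+h(y_0),
\]
where $y_0=c(0)$ and the last equality is exactly $P$--invariance. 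Now suppose $c(t)$ lies within $L$ of the ray $pc$ and choose $s\ge 0$ with $d\bigl(c(t),pc(s)\bigr)\le L$. Coarse Lipschitzness gives $\bigabs{h(c(t))-h(pc(s))}\le L+D_0$, and combining this with the two unit-slope estimates yields $\abs{t-s}\le L+3D_0$. Since $p$ is an isometry and $pc$ is a geodesic, $d\bigl(pc(s),pc(t)\bigr)=\abs{s-t}$, so
\[
   d\bigl(c(t),pc(t)\bigr)\le d\bigl(c(t),pc(s)\bigr)+\abs{s-t}\le 2L+3D_0,
\]
and $D(L)=2L+3D_0$ works.

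The main obstacle is the \emph{uniformity} of the synchronization: a priori the asymptotic offset between the two rays $c$ and $pc$—both asymptotic to $\eta$—could depend on $p$ and on the arbitrary ray $c$, and one must rule this out with a single constant depending only on $L$. The point that eliminates this dependence is precisely the $P$--invariance of $h$, which forces $h\bigl(pc(0)\bigr)=h\bigl(c(0)\bigr)$ so that $h$ carries the same additive constant along every translate ray; once this is in hand, the remainder is bookkeeping of the coarse-Lipschitz and unit-slope estimates with the universal horofunction constant $D_0$.
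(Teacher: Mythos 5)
Your proof is correct and follows essentially the same route as the paper's: both reduce the lemma to the existence of a $P$--invariant horofunction $h$ (Proposition~\ref{prop:InvariantHoro}) and exploit the identity $h\bigl(pc(t)\bigr)=h\bigl(c(t)\bigr)$ forced by invariance. The only difference is packaging: the paper extends Definition~\ref{def:Horofunction}(\ref{item:HorofunctionRay}) from $5\delta$ to $L$--neighborhoods of rays by a subdivision argument and applies it once to the pair $c(t)$, $pc(t)$, whereas you derive coarse-Lipschitz and unit-slope estimates from Definition~\ref{def:Horofunction}(\ref{item:HorofunctionTriangle}) and synchronize the ray parameters explicitly --- if anything, your bookkeeping is more complete than the paper's sketch.
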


\begin{proof}

Let $h : X \rightarrow \R$ be a $P$--invariant horofunction about $\eta$. We claim that there exists a function $D = D(L)$ such that \ if $x,w \in X$ and $d\bigl( w, [x,\xi) \bigr) \le L$ for some ray $[x,\xi)$, then the difference $h(w)-h(x)$ is within $D(L)$ of the distance $d(x,w)$. This should be compared to Definition~\ref{def:Horofunction}(\ref{item:HorofunctionRay}). To verify this claim, one should consider a path between points $x,w$ broken up into segments of length $5 \delta$ and applying the properties of a horofunction iteratively to each pair.

Let $pc(t')$ be a point within a distance $L$ of $c(t)$. Without loss of generality, we may assume $t' > t$. Consider the subpath of $c$ with initial point $c(t)$, which is also a ray tending to $\eta$. The above claim gives a value $D = D(L)$ $d\bigl( c(t),pc(t) \bigr) \simeq_D h\bigl( c(t) \bigr) - h\bigl( pc(t)\bigr) = 0$. Therefore $d\bigl( c(t),pc(t) \bigr) < D$.
\end{proof}

For an arbitrary bounded parabolic horoball, $D$ will represent a constant satisfying both Lemma~\ref{lem:approxDistance} and Lemma~\ref{lem:AsynchToSynch} for $L = 5 \delta$.

\begin{lem}
\label{lem:lowcenters}
Let $Y$ be a horoball with bounded parabolic group action by $P$ and let $D$ be the constant guaranteed by Lemma~\ref{lem:AsynchToSynch}.
Let $c$ be a geodesic ray based at $y_0$, and choose any $p,p' \in P$. 
Suppose $d\bigl( p(y_0),p'(y_0) \bigr) \le D$.
Then for any $t \ge D + 5\delta$ we have
$d \bigl( pc(t), p'c(t) \bigr) \le D$.
\end{lem}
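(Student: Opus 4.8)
The plan is to reduce to comparing the given ray $c$ with a single translate, and then to combine an ideal-triangle argument with the synchronization supplied by Lemma~\ref{lem:AsynchToSynch}. Since $P$ acts isometrically, applying the isometry $p^{-1}$ gives $d\bigl(pc(t),p'c(t)\bigr)=d\bigl(c(t),gc(t)\bigr)$, where $g=p^{-1}p'\in P$ and the hypothesis becomes $d\bigl(y_0,g(y_0)\bigr)\le D$. Thus it suffices to prove $d\bigl(c(t),gc(t)\bigr)\le D$ for $t\ge D+5\delta$. Here $c$ and $gc$ are geodesic rays based at $y_0$ and $g(y_0)$ respectively, both tending to the unique boundary point $\eta$ (since $g$ fixes $\eta$), parametrized by arc length so that $d\bigl(y_0,c(t)\bigr)=d\bigl(g(y_0),gc(t)\bigr)=t$.

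Next I would form an ideal triangle with vertices $y_0$, $g(y_0)$, and $\eta$, whose sides are the segment $[y_0,g(y_0)]$ and the two rays $c$ and $gc$. By Proposition~\ref{prop:idealthin}, the side $c$ lies in the closed $5\delta$--neighborhood of $[y_0,g(y_0)]\cup gc$. The quantitative step is to rule out the short connecting segment: if $c(t)$ were within $5\delta$ of some point $z\in[y_0,g(y_0)]$, then $t=d\bigl(y_0,c(t)\bigr)\le d(y_0,z)+5\delta\le D+5\delta$. Hence for every $t>D+5\delta$ the point $c(t)$ must lie within $5\delta$ of the ray $gc$. (If $y_0=g(y_0)$ the triangle degenerates to two rays with common endpoints, and the same conclusion follows from the fact that such rays lie at Hausdorff distance at most $2\delta$.)

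Finally, I would apply Lemma~\ref{lem:AsynchToSynch} with $L=5\delta$ and with $g$ in place of $p$: since $c(t)$ lies within $5\delta$ of the ray $gc$, the lemma upgrades this to the synchronized estimate $d\bigl(c(t),gc(t)\bigr)\le D$ for all $t>D+5\delta$. The boundary value $t=D+5\delta$ then follows by continuity of $t\mapsto d\bigl(c(t),gc(t)\bigr)$, and undoing the reduction yields $d\bigl(pc(t),p'c(t)\bigr)\le D$.

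The main obstacle is not the hyperbolic geometry itself but the careful matching of hypotheses for Lemma~\ref{lem:AsynchToSynch}: one must verify that $c(t)$ is close to the ray $gc$ \emph{itself}---rather than merely to the short segment joining the two basepoints---and that the parametrizations align, so that ``close to $gc$ as a set'' genuinely upgrades to ``close to the synchronized point $gc(t)$.'' This is precisely what the threshold $t\ge D+5\delta$ is engineered to guarantee.
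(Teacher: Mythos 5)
Your proof is correct and follows essentially the same route as the paper's: both arguments combine the $5\delta$--thinness of the ideal triangle with vertices $p(y_0)$, $p'(y_0)$, $\eta$ (to force $c(t)$ onto the translated ray once $t$ exceeds $D+5\delta$) with the synchronization of Lemma~\ref{lem:AsynchToSynch} at $L=5\delta$. The paper merely phrases this as a contradiction argument (assuming $d\bigl(pc(t),p'c(t)\bigr)>D$ and deducing $t\le D+5\delta$), whereas you run the same implications in the direct, contrapositive direction and additionally tidy the boundary case $t=D+5\delta$.
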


\begin{proof}
Assume by way of contradiction that $d \bigl( pc(t), p'c(t) \bigr) > D$.
Then by Lemma~\ref{lem:AsynchToSynch}, the distance from the point $pc(t)$ to the ray $p'c$ is greater than $5\delta$, so that $pc(t)$ lies within a distance $5\delta$ of a point on a geodesic $\bigl[ p(y_0), p'(y_0) \bigr]$, which has length at most $D$.  Thus $t\le D+5\delta$, as desired.
\end{proof}

\begin{lem}
\label{lem:approxDistance}
Let $Y$ be a horoball with bounded parabolic group action by $P$, and let $D$ be the constant guaranteed by Lemma~\ref{lem:AsynchToSynch}. There exists a constant $F$ that does not depend on the choice of $y_0$ such that the following holds.
Let $c$ be a geodesic ray based at $y_0$, and choose any $p,p' \in P$. Let $t_0$ be the minimum value such that $d \bigl( pc(t_0),p'c(t_0) \bigr) \leq D$.  Then $d \bigl(p(y_0),p'(y_0) \bigr) \approx_F 2t_0$.
\end{lem}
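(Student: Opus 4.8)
The plan is to analyze the geodesic ideal triangle $\Delta$ with vertices $p(y_0)$, $p'(y_0)$, and $\eta$, whose two infinite sides are the rays $pc$ and $p'c$ (each tending to $\eta$ because $P$ fixes $\eta$) and whose finite side is a geodesic $\sigma = [p(y_0),p'(y_0)]$ of length $\ell = d\bigl(p(y_0),p'(y_0)\bigr)$. By Proposition~\ref{prop:idealthin} this triangle is $5\delta$--thin. Writing $t_1 = \bigl(p'(y_0) \bigm| \eta\bigr)_{p(y_0)}$ and $t_2 = \bigl(p(y_0) \bigm| \eta\bigr)_{p'(y_0)}$ for the parameters of the equiradial points $pc(t_1)$ and $p'c(t_2)$, the exact identity for a finite third vertex, passed to the limit in the extended Gromov product, gives $t_1 + t_2 = \ell + O(\delta)$. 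I would first record that $\abs{t_1-t_2}$ is bounded by a constant depending only on $\delta$ and $D$: the two equiradial points lie within a bounded multiple of $\delta$ of each other by thinness, so $pc(t_1)$ lies within that distance of the ray $p'c$, and Lemma~\ref{lem:AsynchToSynch} then forces $d\bigl(pc(t_1),p'c(t_1)\bigr)\le D$; comparing $p'c(t_1)$ and $p'c(t_2)$ along the ray $p'c$ bounds $\abs{t_1-t_2}$. Consequently $t_1,t_2 = \ell/2 + O(\delta)$.

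The heart of the argument is a tracking estimate for the function $f(t) = d\bigl(pc(t),p'c(t)\bigr)$. For $t$ below the equiradial parameter, thinness forces the initial portion of $pc$ to stay within $5\delta$ of the finite side $\sigma$ rather than of $p'c$; since $pc(t)$ and $\sigma(t)$ are both at distance $t$ from $p(y_0)$, the nearest point of $\sigma$ to $pc(t)$ is coarsely $\sigma(t)$, and symmetrically $p'c(t)$ tracks $\sigma(\ell-t)$. Therefore, for all $t \le \min(t_1,t_2) - O(\delta)$,
\[
   f(t) = d\bigl(\sigma(t),\sigma(\ell-t)\bigr) + O(\delta) = (\ell - 2t) + O(\delta).
\]
I expect this tracking step to be the main obstacle: one must verify that the two rays hug $\sigma$ at the \emph{matched} parameters with a uniform $O(\delta)$ error, and that the valid range, which by the previous paragraph is $t \le \ell/2 - O(\delta)$, actually contains the crossing time $t_0$. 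All error terms here are explicit multiples of $\delta$ coming from Proposition~\ref{prop:idealthin} and Lemma~\ref{lem:AsynchToSynch}, and in particular none depend on the basepoint $y_0$.

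With the tracking estimate in hand I would first locate $t_0$ so as to avoid circularity: evaluating the estimate at the top of its range, $t = \min(t_1,t_2) - O(\delta) = \ell/2 - O(\delta)$, gives $f = O(\delta) \le D$ (using $D \ge 5\delta$), so the first crossing time satisfies $t_0 \le \ell/2 - O(\delta)$ and hence lies within the valid range. Evaluating the estimate at $t=t_0$, where $0 \le f(t_0) \le D$ by the definition of $t_0$, then sandwiches $\ell - D - O(\delta) \le 2t_0 \le \ell + O(\delta)$, which is exactly $d\bigl(p(y_0),p'(y_0)\bigr) \approx_F 2t_0$ with $F = D + O(\delta)$. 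As an independent check on the lower direction, concatenating $pc|_{[0,t_0]}$, a segment of length at most $D$ from $pc(t_0)$ to $p'c(t_0)$, and $p'c|_{[0,t_0]}$ reversed produces a path from $p(y_0)$ to $p'(y_0)$ of length at most $2t_0 + D$, giving $\ell \le 2t_0 + D$ directly. Since every constant above is a function only of $\delta$ and $D$, the resulting $F$ is independent of the choice of basepoint $y_0$, as required.
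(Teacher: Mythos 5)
Your proposal is correct and rests on the same two ingredients as the paper's proof: the $5\delta$--thinness of the ideal triangle $pc \cup p'c \cup \bigl[p(y_0),p'(y_0)\bigr]$ together with Lemma~\ref{lem:AsynchToSynch}, which converts ``synchronous distance $>D$'' just before time $t_0$ into ``distance $>5\delta$ from the other ray,'' forcing the rays onto the finite side, plus the same concatenation argument for the upper bound $\ell \le 2t_0 + D$. The paper simply carries this out once at $t = t_0 - 1$ with a direct triangle-inequality computation, whereas you package it as a tracking estimate via equiradial points; the extra machinery is not needed but does no harm.
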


\begin{proof}
Begin by labeling the lone ideal point $\eta$ and the basepoint of the ray $c$ by $y_0$.
If $t=t_0 - 1$, then each of the points $pc(t)$ and $p'c(t)$ are within a distance $5\delta$ of respective points $z$ and $z'$ on a geodesic $\bigl[ p(y_0),p'(y_0) \bigr]$.
Indeed by Proposition~\ref{prop:idealthin}, the triangle $pc \cup p'c \cup \bigl[ p(y_0),p'(y_0) \bigr]$ is $5\delta$--thin. However, by Lemma~\ref{lem:AsynchToSynch} the distance from the point $pc(t)$ to the ray $p'c$ is greater than $5\delta$. A similar conclusion holds for $p'c(t)$.
As illustrated in Figure~\ref{fig:triangle},
\begin{figure}
\labellist
\small
\pinlabel {$\leq 5 \delta$} at 52 53
\pinlabel {$\leq 5 \delta$} at 52 71
\pinlabel $p'(y_0)$ at -7 13
\pinlabel $p(y_0)$ at -7 110
\pinlabel $z$ at 20 73
\pinlabel $z'$ at 19 54
\pinlabel $\leq D$ at 108 60
\pinlabel $t_0-1$ at 45 100
\pinlabel $t_0-1$ at 45 21
\pinlabel $1$ at 80 35
\pinlabel $1$ at 80 84
\pinlabel $\eta$ at 190 62
\endlabellist
\begin{center}
\includegraphics{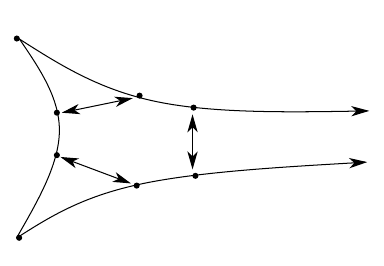}
\end{center}
\caption{A pair of asymptotic rays that differ by a parabolic isometry}
\label{fig:triangle}
\end{figure}
the triangle inequality give the following bounds:
\begin{align*}
   d(z,z') &\le D + 2 + 10\delta, \\
   d\bigl( p(y_0),z \bigr) &\ge t_0 - 1 - 5\delta, \\
   d\bigl( p'(y_0),z' \bigr) &\ge t_0 - 1 - 5\delta.
\end{align*}
The subsegments $\bigl[ p(y_0),z \bigr]$ and $\bigl[ z',p'(y_0) \bigr]$ are either disjoint or intersect in a segment of length at most $d(z,z')$.
Thus $d \bigl( p(y_0),p'(y_0) \bigr) \ge 2t_0 -4 -20\delta -D$.  We also note that, again as shown in Figure~\ref{fig:triangle},
we have
\begin{align*}
   d \bigl( p(y_0),p'(y_0) \bigr)
      &\le d \bigl( p(y_0), pc(t_0) \bigr) + \\
      &\quad\ d \bigl( pc(t_0), p'c(t_0) \bigr) +
      d_Y \bigl(p'c(t_0), p'(y_0) \bigr) \\
      &\le 2t_0 + D
\end{align*}
The conclusion now follows if we let $F = 4 + 20\delta + D$.
\end{proof}

%%%%%%%%%%%%%%%%%%%%%%%%%%%%%%%%%%%%%%%
\section{Uniformly Perfect Boundaries}
\label{sec:UniformlyPerfect}
%%%%%%%%%%%%%%%%%%%%%%%%%%%%%%%%%%%%%%%

We saw in Section~\ref{sec:Quasisymmetric} that in order to apply Theorem~\ref{thm:QStoQI} to pull metric information from the boundary of a hyperbolic space to its interior, we would need to know that the boundary is uniformly perfect.  In this section we prove Proposition~\ref{prop:UniformlyPerfect}, a result claimed by Bourdon--Kleiner that characterizes uniform perfection in terms of ideal triangles.
As an application we show in Proposition~\ref{prop:CuspUniformlyPerfect} that the boundary of any weak cusped space for a relatively hyperbolic group pair is uniformly perfect.

\begin{prop}[Meyer, Bourdon--Kleiner]
\label{prop:UniformlyPerfect}
Let $X$ be a proper, visual $\delta$--hyperbolic space such that $\boundary X$ contains at least two points.
The boundary $\boundary X$ is uniformly perfect if and only if there exists a constant $K<\infty$ such that each point $x\in X$ lies within distance at most $K$ from all three sides of some ideal triangle.
\end{prop}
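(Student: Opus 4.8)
The plan is to fix a visual metric on $\boundary X$ with basepoint $w$, parameter $a>1$ and multiplicative constants $c_1,c_2$, and to translate both conditions into statements about Gromov products. Being a $K$--approximate center of an ideal triangle with vertices $\xi_0,\xi_1,\xi_2$ is equivalent, up to an additive $O(\delta)$, to the three inequalities $(\xi_i|\xi_j)_x \le K$, since $d\bigl(x,(\xi_i,\xi_j)\bigr) = (\xi_i|\xi_j)_x + O(\delta)$. The one computational tool I will use repeatedly is the bookkeeping estimate that if $\zeta \in \boundary X$, if $c=[w,\zeta)$ is a ray, and if $x=c(t)$, then for every $\eta\in\boundary X$,
\[
   (\zeta|\eta)_x \approx \max\bigl\{0,\ (\zeta|\eta)_w - t\bigr\},
   \qquad
   (w|\eta)_x \approx \max\bigl\{0,\ t-(\zeta|\eta)_w\bigr\},
\]
both up to additive $O(\delta)$; this follows from thinness of triangles (Proposition~\ref{prop:idealthin}) and the Gromov inequality. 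Crucially, both directions are carried out with the single fixed basepoint $w$: I avoid passing to visual metrics based at other points, since changing basepoint is only quasi-M\"obius and can destroy any uniform control on the perfection constant.

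For the reverse implication, assume every point is a $K$--approximate center. Given $\zeta$ and $t\ge 0$, set $x=c(t)$ on a ray $c=[w,\zeta)$, and let $\xi_0,\xi_1,\xi_2$ be the vertices of an ideal triangle with $x$ as a $K$--approximate center, so $(\xi_i|\xi_j)_x \le K_1 := K+O(\delta)$ for all $i\ne j$. By the Gromov inequality, at most one vertex can satisfy $(\zeta|\xi_i)_x > K_1+\delta$ and at most one can satisfy $(w|\xi_i)_x > K_1+\delta$, so some vertex $\eta$ has both products bounded by $K_1+\delta$. Feeding these two bounds into the displayed estimate forces $(\zeta|\eta)_w$ into the window $[\,t-M,\ t+M\,]$ for a uniform $M$. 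Thus for every $\zeta$ and every level $t\ge 0$ there is a boundary point at Gromov-product level $\approx t$; choosing $t$ appropriately for a given radius $r$ places $\eta$ in the annulus $B_r(\zeta)\setminus B_{\mu r}(\zeta)$ for a uniform $\mu$, which is exactly uniform perfection.

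For the forward implication, assume $\boundary X$ is uniformly perfect and let $x\in X$ be arbitrary. Using visuality and the coarse density of geodesic lines established in the proof of Lemma~\ref{lem:VisualIffTaut}, choose a line $\ell$ passing within a bounded distance of $x$, with endpoints $\xi_0,\xi_1$ and foot $m$ nearest $w$; after possibly swapping $\xi_0\leftrightarrow\xi_1$ we may assume $x$ lies on the $\xi_0$--side of $m$. The pair $\xi_0,\xi_1$ already gives $(\xi_0|\xi_1)_x\approx 0$. For the third vertex I apply uniform perfection not in any metric based at $x$, but at the \emph{point} $\xi_0$ and at the single scale $r=a^{-d(w,x)}$ dictated by the distance of $x$ from $w$; this yields $\xi_2$ with $(\xi_0|\xi_2)_w = d(w,x)+O(1)$ and, by the triangle inequality, $(\xi_1|\xi_2)_w = d(w,m)+O(1)$. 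Applying the displayed estimate with $\zeta=\xi_0$ gives $(\xi_0|\xi_2)_x = O(1)$, while $(\xi_1|\xi_2)_x \approx (w|\xi_2)_x = O(1)$ because $\xi_1$ lies in the backward direction from $x$. Hence all three products at $x$ are uniformly bounded, so $x$ is a $K$--approximate center for a uniform $K$. The edge case in which $x$ lies within a bounded distance of the foot $m$ (where one instead takes $\xi_2$ at the scale of $d(\xi_0,\xi_1)$) is routine.

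The main obstacle is precisely this scale-and-basepoint issue in the forward direction: uniform perfection is a hypothesis about the single visual metric based at $w$, whereas approximate centers must be produced at arbitrary points $x$, which may be arbitrarily far from $w$. One cannot simply pass to the visual metric based at $x$, because metrics based at different points are only quasi-M\"obius---not uniformly quasisymmetric---equivalent, so the perfection constant need not survive uniformly. The device that resolves this is to locate the third vertex by invoking uniform perfection at the boundary endpoint $\xi_0$ and at exactly the radius $a^{-d(w,x)}$, so that the resulting point branches off $\ell$ at $x$ rather than at the foot $m$; everything else is the thin-triangle bookkeeping packaged in the displayed estimate.
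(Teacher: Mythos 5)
Your proof is correct, but it takes a genuinely different route from the one in the paper. The paper's proof is a reduction to Meyer's theorem: it quotes the ``uniformly equilateral'' criterion (for each center $w$ and each large $R$ the ball $B(w,R)$ contains three points with pairwise Gromov products at least $\mu R$) as a black box equivalent to uniform perfection, and then shows uniform equilateralness is equivalent to the ideal-triangle condition, using visuality to push the three finite points out to infinity and an Arzel\`a--Ascoli limit to produce the ideal triangle. You instead give a self-contained two-way translation entirely in terms of Gromov products at the fixed basepoint $w$: the radial estimate $(\zeta|\eta)_{c(t)}\approx\max\{0,(\zeta|\eta)_w-t\}$, $(w|\eta)_{c(t)}\approx\max\{0,t-(\zeta|\eta)_w\}$ is correct and does all the work. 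In the reverse direction your pigeonhole step (at most one vertex of the triangle can have large product with $\zeta$, at most one with $w$, so some vertex has $(\zeta|\eta)_w\in[t-M,t+M]$) is a clean replacement for Meyer's equilateral criterion, and the translation into annuli is sound --- note that the boundary case $t=0$ never actually arises, since for $r$ that large one has $B_r(\zeta)=\boundary X$ and there is nothing to prove. In the forward direction your key move --- invoking uniform perfection at the boundary point $\xi_0$ at the single scale $a^{-d(w,x)}$ so that the third vertex branches off the line near $x$ rather than near the foot $m$ --- is exactly the right way to circumvent the basepoint-change problem you identify, and the upper bound on $(\xi_1|\xi_2)_x$ does require the extra observation you make (that $\xi_1$ and $w$ are interchangeable as seen from $x$), since the Gromov inequality alone does not bound that product from above. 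What each approach buys: the paper's argument is shorter on the page because Meyer's theorem absorbs the hard direction, and Meyer's criterion also works without properness; your argument is longer but elementary and self-contained, avoids the Arzel\`a--Ascoli limiting step, and makes the dependence of the perfection constant $\mu$ on $K$, $\delta$, and the visual parameters completely explicit.
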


The statement above in terms of ideal triangles is claimed without proof by Bourdon--Kleiner in \cite{BourdonKleiner15}.  
A similar result---without the properness hypothesis---is the main result of Meyer's dissertation \cite{Meyer_UniformlyPerfect}.
We recall the statement of Meyer's theorem, which is used in the proof below.

\begin{proof}
According to \cite{Meyer_UniformlyPerfect}, a visual $\delta$--hyperbolic space $X$ has uniformly perfect boundary if and only if $X$ is \emph{uniformly equilateral} in the following sense:
there exist two numbers $R_0,\mu>0$ such that for each $w \in X$ and $R>R_0$ the closed ball $B(w,R)$ contains three points $x_1,x_2,x_3$ such that
\[
   (x_i|x_j)_{x_k} \ge \mu R
   \qquad \text{whenever $\{i,j,k\}=\{1,2,3\}$.}
\tag{$\dagger$}
\label{eqn:Equilateral}
\]

We use Meyer's theorem to establish the forward implication first. Suppose $X$ is proper and uniformly equilateral, and choose any $w \in X$.
For each $R>R_0$ choose points $x_1,x_2,x_3$ satisfying \eqref{eqn:Equilateral} and choose a geodesic triangle $\Delta(x_1,x_2,x_3)$.
Let $q$ be a point within distance $\delta$ of each of the equiradial points of $\Delta(x_1,x_2,x_3)$, and for each $i$ let $y_i \in [x_i,x_{i+1}]$ be the unique point with $d(x_i,y_i) = (x_{i+1}|x_{i+2})_{x_i} - \mu R$.  (Indices are considered mod $3$.) Then by construction,
$\bigabs{d(q,y_i)-\mu R}<\delta$.
By visuality, there exist three geodesic rays $c_1,c_2,c_3$ based at $q$ with $d\bigl(y_i,c_i(\mu R) \bigr)<C+\delta$, where $C$ is given by Definition~\ref{def:visual}. By Lemma~\ref{lem:VisualIffTaut}, we see that the constant $C$ can be chosen independent of $q$.
On the other hand, for each $i\ne j$ we have $d(y_i,y_j) \ge 2\mu R - \delta$, which becomes arbitrarily large as $R \to \infty$.
Therefore we may choose $R>R_0$ large enough that
\[
   d \bigl( c_i(\mu R), c_j(\mu R) \bigr) > \delta
   \qquad \text{for all $i\ne j$.}
\]
By $\delta$--hyperbolicity, $\bigl( c_i(t) \big| c_j(t) \bigr)_q < \mu R$ for each $t$.
Thus there exists a geodesic triangle $\Delta_t$ with vertices $c_1(t), c_2(t), c_3(t)$ whose sides all intersect the closed ball $B(q,\mu R + \delta)$.
Passing to a subsequence if necessary, the triangles $\Delta_t$ limit to an ideal triangle $\Delta_\infty$ such that the original point $w$ is within a uniformly bounded distance of all three sides of $\Delta_\infty$.

Conversely, suppose there exists a universal constant $K$ such that for each point $w \in X$ there exists an ideal triangle $\Delta$ with vertices $\xi_1,\xi_2,\xi_3 \in \boundary X$ and points $q_1,q_2,q_3$ such that $q_i \in (\xi_{i-1},\xi_{i+1})$ and $d(w,q_i)<K$.
Let $\mu=1/2$ and $R_0 = 6K + 60 \delta$. Choose geodesic rays $c_i=[w,\xi_i)$.  Consider any $R> R_0$, and observe that, by definition, we have $R>K+5\delta$.
For any choice of $\{i,j,k\}=\{1,2,3\}$, consider the triangle $[w,q_k] \cup c_i \cup [q_k, \xi_i)$, where the third side is chosen to be a subset of $\Delta$.
By Proposition~\ref{prop:idealthin} this ideal triangle is $5\delta$--thin, and since $d(w,q_k)< K$, the point $x_i = c_i(R)$ is at most a distance $5 \delta$ from a point $y_{ij} \in [q_k,\xi_{i})$.
Thus the distance $d(y_{ij},q_k)$ is approximately equal to $R$ with an error at most $K+5\delta$.  Applying the above reasoning to all six ordered choices of distinct indices $i,j,k$, we see that each distance $d(y_{ij},y_{ji})$ is approximately equal to $2R$ with error at most $2(K+5\delta)$.
Since $d(x_i,y_{ij}) \leq 5\delta$, each distance $d(x_i,x_j)$ is approximately equal to $2R$ with error less than $2K + 20\delta$.
The definition of the Gromov product now gives the desired lower bound:
\[
   (x_i | x_j )_{x_k} > R-3K-30\delta > \frac{R}{2}=\mu R
   \quad \text{whenever} \quad
   \{i,j,k\}=\{1,2,3\}.
   \qedhere
\]
\end{proof}

In the case of a nonelementary word hyperbolic group $\Gamma$, the boundary $\boundary \Gamma$ is always uniformly perfect.
Indeed, Coornaert \cite{Coornaert93} shows that $\boundary \Gamma$ is Ahlfors regular, which easily implies uniform perfection (see, for example, Mackay--Tyson \cite[\S 1.4.3]{MackayTysonBook}).
The following result generalizes this result to proper hyperbolic spaces admitting a cocompact (not necessarily proper) group action.

\begin{prop}
\label{prop:HyperbolicUniformlyPerfect}
Let $X$ be a proper $\delta$--hyperbolic space admitting a nonelementary, cocompact, isometric group action. Then $\boundary X$ is uniformly perfect.
\end{prop}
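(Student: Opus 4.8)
The plan is to verify the ideal-triangle criterion of Proposition~\ref{prop:UniformlyPerfect} and then transport a single ideal triangle around $X$ using the group action. First I would note that $X$ is visual, since it admits a cocompact group action (Definition~\ref{def:visual}), so that Proposition~\ref{prop:UniformlyPerfect} is applicable. Because the action is nonelementary, its limit set---and hence $\boundary X$---contains at least three distinct points $\xi_1,\xi_2,\xi_3$.

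Next I would fix an ideal triangle $\Delta$ with vertices $\xi_1,\xi_2,\xi_3$, which exists by Proposition~\ref{prop:idealthin}. By the discussion of approximate centers preceding Definition~\ref{def:Horofunction}, that same proposition guarantees a $5\delta$--approximate center $x_0 \in X$ of $\Delta$, \emph{i.e.}, a point whose closed ball $B(x_0,5\delta)$ meets all three sides of $\Delta$. In particular $x_0$ lies within distance $5\delta$ of each of the three sides.

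Then I would use cocompactness to bound the displacement of an arbitrary point from the orbit of $x_0$. Let $C$ be a compact set with $\Gamma C = X$, and set $D = \sup_{c \in C} d(x_0,c) < \infty$. Given any $x \in X$, write $x = \gamma c$ with $\gamma \in \Gamma$ and $c \in C$; then $d(x,\gamma x_0) = d(c,x_0) \le D$. The translate $\gamma \Delta$ is an ideal triangle with vertices $\gamma\xi_1,\gamma\xi_2,\gamma\xi_3$ whose sides are the $\gamma$--images of the sides of $\Delta$, and $\gamma x_0$ lies within $5\delta$ of each of them. Consequently $x$ lies within distance $D + 5\delta$ of all three sides of $\gamma\Delta$.

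Setting $K = D + 5\delta$ then shows that every point of $X$ lies within distance $K$ of all three sides of some ideal triangle, so Proposition~\ref{prop:UniformlyPerfect} yields that $\boundary X$ is uniformly perfect. The only points requiring care are the passage from nonelementarity to the existence of three distinct boundary points and the existence of the approximate center $x_0$; once these are in hand, the remainder is a direct equivariance-plus-cocompactness estimate, and I expect no essential obstacle.
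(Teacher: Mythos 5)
Your proof is correct and follows essentially the same route as the paper: both verify the ideal-triangle criterion of Proposition~\ref{prop:UniformlyPerfect} by fixing one ideal triangle with a $5\delta$--approximate center and translating it by the cocompact action. Your explicit check of visuality is a minor addition the paper leaves implicit, but the argument is the same.
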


\begin{proof}
Whenever $G$ acts cocompactly on $X$, its limit set must equal the entire boundary $\boundary X$.
If the action is nonelementary, then $\boundary X$ contains at least three points.
We verify the ideal triangle criterion in Proposition~\ref{prop:UniformlyPerfect}. By Proposition~\ref{prop:idealthin}, the space $X$ contains at least one ideal triangle $\Delta$, and there exists a point $p$ such that the closed ball $B(p,5\delta)$ intersects all three sides of $\Delta$.
By cocompactness, for any point $x \in X$ there exists $g \in G$ such that $x$ is within a uniformly bounded distance of $g(p)$ and hence of all three sides of the triangle $g(\Delta)$.
\end{proof}

\begin{prop}
\label{prop:CuspUniformlyPerfect}
Suppose $X$ is a non-elementary $\delta$--hyperbolic space that admits a cusp-uniform action such that
all peripheral subgroups are infinite and finitely generated. Then the Gromov boundary $\boundary X$ equipped with any visual metric is uniformly perfect.
\end{prop}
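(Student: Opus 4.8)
The plan is to verify the ideal triangle criterion of Proposition~\ref{prop:UniformlyPerfect}. By hypothesis $X$ is non-elementary, so $\boundary X$ contains at least three points, and $X$ is visual by Definition~\ref{def:relhyp}. It therefore suffices to produce a constant $K<\infty$ such that every point $x \in X$ lies within distance $K$ of all three sides of some ideal triangle. The difficulty is that $X$ need not admit a cocompact action, so the clean argument of Proposition~\ref{prop:HyperbolicUniformlyPerfect} does not directly apply; the group $\Gamma$ acts cocompactly only on the truncated space $X - U$, not on the horoballs themselves.

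First I would split $X$ into two regions: the truncated space $X - U$ and the union $U$ of the equivariant open horoballs. For points $x \in X - U$, the argument is essentially that of Proposition~\ref{prop:HyperbolicUniformlyPerfect}: fix one ideal triangle $\Delta$ (which exists by Proposition~\ref{prop:idealthin}) and a $5\delta$--approximate center $p$, and use cocompactness of the $\Gamma$--action on $X - U$ to translate $p$ close to any given $x$ by some $g \in \Gamma$, so that $x$ lies uniformly close to all three sides of $g(\Delta)$. This handles all points outside the horoballs with a uniform constant.

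The main work is the case $x \in U$, say $x$ lies in a horoball $B$ stabilized by a peripheral subgroup $P$, on which $P$ acts as a bounded parabolic action (Remark~\ref{rem:inducedhoroball}). Here I would exploit the bounded parabolic structure developed in Section~\ref{sec:BoundedParabolic}. Since $P$ is infinite, its unique limit point $\eta = \partial B$ is a bounded parabolic point of the $\Gamma$--action, hence there exist at least two further boundary points, and in fact one can find points $\xi_1, \xi_2 \in \boundary X$ together with an approximate center lying deep in $B$. Concretely, fix a basepoint $y_0 \in B$ and a ray $c$ to $\eta$; by the bounded parabolic property every point of $B$ is within a bounded distance of some translate $pc$. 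Given $x \in B$, choose $p \in P$ with $x$ within a bounded distance of $p c(t)$ for the appropriate parameter $t$. I would then pick a second element $p' \in P$ with $d\bigl(p(y_0), p'(y_0)\bigr)$ large (possible since $P$ is infinite and acts properly), so that the two asymptotic rays $pc$ and $p'c$ fellow-travel up to depth roughly $t_0$ and then diverge; Lemma~\ref{lem:approxDistance} and Lemma~\ref{lem:lowcenters} quantify exactly this divergence. The ideal triangle with vertices $\eta$ (approached along $pc$), the endpoint of $p'c$, and a third fixed boundary point should then have an approximate center within a bounded distance of $x$, with the bound controlled by the constants $D$ and $F$ of those lemmas independently of the choice of $y_0$.

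The hard part will be producing the third vertex and verifying the bound is genuinely uniform over all horoballs and all depths simultaneously. Since the family of horoballs is $\Gamma$--equivariant and there are only finitely many $\Gamma$--orbits of horoballs (one for each $P \in \mathbb{P}$), I expect uniformity across horoballs to follow from equivariance once it is established within a single horoball. The delicate point is that the approximate center must be controlled as $x$ ranges arbitrarily deep into $B$: one must arrange the two parabolic translates to diverge at precisely the depth where $x$ sits, which is where the $\approx_F 2t_0$ estimate of Lemma~\ref{lem:approxDistance} is essential, guaranteeing that the branch point of the ideal triangle tracks the depth of $x$ up to bounded error. Assembling the two cases, the maximum of the two constants gives the desired $K$, and Proposition~\ref{prop:UniformlyPerfect} then yields uniform perfection of $\boundary X$.
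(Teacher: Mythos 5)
Your high-level strategy matches the paper's: verify the ideal triangle criterion of Proposition~\ref{prop:UniformlyPerfect}, dispose of the truncated part, and do the real work inside a bounded parabolic horoball using Lemmas \ref{lem:AsynchToSynch}, \ref{lem:lowcenters}, and~\ref{lem:approxDistance}. But the central construction has a genuine gap: the triangle you describe is degenerate. Since $\eta$ is the fixed point of $P$, the rays $pc$ and $p'c$ are \emph{both} asymptotic to $\eta$, so ``the endpoint of $p'c$'' is again $\eta$ and your proposed vertex set $\{\eta, \text{endpoint of } p'c, \text{third fixed point}\}$ contains only two distinct boundary points. The paper's third vertex is not a fixed boundary point and does not come from the forward (parabolic) end of a translated ray at all: one takes a bi-infinite geodesic $c$ from some $\eta_-\ne\xi$ into the parabolic point $\xi$ passing near $w$, and the new vertex is $p(\eta_-)$, the translate of the \emph{non-parabolic} endpoint, which varies with $w$. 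The lower bound $d\bigl(w,p(w)\bigr)>D+2C$ is then exactly what shows, via the thin-bigon argument and Lemma~\ref{lem:AsynchToSynch}, that $p(\eta_-)\ne\eta_-$ (non-degeneracy) and that $w$ is far enough from $pc$ to be forced, by $5\delta$--thinness, close to the third side $(\eta_-,p(\eta_-))$. You never address non-degeneracy, and with your vertices there is nothing to address.

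The second missing ingredient is the mechanism for choosing the group element. You propose taking $p'$ with $d\bigl(p(y_0),p'(y_0)\bigr)$ large, measured at the horosphere basepoint; by Lemma~\ref{lem:approxDistance} this places the branch point at depth $t_0\approx\frac12 d\bigl(p(y_0),p'(y_0)\bigr)$ independently of where $x$ sits, so for $x$ at depth $t\ne t_0$ the translated geodesic is far from $x$ (or indistinguishable from the original near $x$), and the triangle fails the criterion. You correctly flag that the branch point must track the depth of $x$, but ``$P$ is infinite and acts properly'' only produces elements of large displacement, not displacement \emph{at $x$} in a prescribed window. The paper closes this by a discrete intermediate-value argument: each generator $s$ displaces every point of the horoball by at most a uniform $R$ (this uses Lemma~\ref{lem:AsynchToSynch} to synchronize asymptotic rays), so writing an element of large displacement as a word in generators and taking the first prefix whose displacement at $x$ exceeds $D+2C$ yields $p$ with $D+2C< d\bigl(x,p(x)\bigr)\le D+2C+R$. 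Without this step, and without the corrected triangle, the proposal does not constitute a proof.
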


Since connected spaces are always uniformly perfect, Proposition~\ref{prop:CuspUniformlyPerfect} is primarily of interest for relatively hyperbolic pairs $(\Gamma,\mathbb{P})$ with disconnected boundary, which happens precisely when $\Gamma$ is not one-ended relative to $\mathbb{P}$ (see \cite[Prop.~10.1]{BowditchRelHyp}).

\begin{proof}
As in the cocompact case, we verify the criterion from Proposition~\ref{prop:UniformlyPerfect}. We will first assume that $X$ is visual, and deal with the non-visual case afterwards. Fix a cusp-uniform action by some group pair $\GaP$.

Since $\Gamma$ acts cocompactly on $X\setminus U$, each point of $X\setminus U$ lies within a uniformly bounded distance of $U$.  Furthermore, by visuality each point of $U$ lies within a uniformly bounded distance of a geodesic line tending to a parabolic point.  Thus, in order to establish the proposition, it suffices to show the following: there exists $D<\infty$ such that for each line $c$ tending to a parabolic point of $\boundary X$ and each point $w \in c \cap U$, there exists an ideal triangle having $c$ as a side such that $w$ is within a distance $D$ of all three sides of the triangle.

Fix a peripheral subgroup $P \in \mathbb{P}$ stabilizing the parabolic point $\xi \in \boundary X$ and choose a corresponding bounded parabolic horoball $B$ centered at $\xi$ (see Remark~\ref{rem:inducedhoroball}).  We claim that there exists $N<\infty$ such that for each $w \in B$ there exists $p \in P$ such that
\[
   D  + 2C< d \bigl( w, p(w) \bigr) < N,
\]
where $C$ is the constant given by Definition~\ref{def:visual} and $D$ is the constant from Lemma~\ref{lem:AsynchToSynch}. Let $w \in c$ be a point on a geodesic ray $c$ such that $P(c)=B$.

Now we claim that if $S$ is a finite, symmetric generating set for $P$ which includes the identity, there exists a value $R$ independent of $w$ such that for each $s \in S$ we have $d\bigl( w,s(w) \bigr) < R$. 
Indeed, by Bridson--Haefliger \cite[Lem.~III.H.3.3]{BH99}, there exists a value $T = T(x,y, \xi)$ such that for any asymptotic geodesic rays $c_x,c_y$ based at $x,y$ tending to $\xi$, it is true that $d\bigl( c_x(t),c_y \bigr) \leq T$ for all $t \geq 0$. We can apply Lemma~\ref{lem:AsynchToSynch} to conclude the existence of a (possibly greater) constant $T'$ such that $d\bigl( c_x(t),c_y(t) \bigr) \leq T'$. If $w = c(t)$ for some geodesic $c$, then $p(w) = pc(t)$ because $p$ acts by isometries. Thus $d\bigl( w,s(w) \bigr) \leq T'_s$ for some number $T'_s$ dependent on $s$. \
The claim follows by letting $R = \min \set{T'_q}{q \in S}$.

Since $P(w)$ is unbounded, there must exist $q \in P$ with $d\bigl( w,q(w) \bigr) > D + 2C$. Express $q$ as a product of generators $q = s_1 \cdots s_k$ where $s_i \in S$, and consider the sequence of points $w=w_0,w_1,\dots,w_k=q(w)$ defined by $w_i = s_1 \cdots s_i (w)$.
There must exist $j$ such that $d(w, w_j) > D + 2C$ but $d(w, w_{j-1}) \leq D + 2C$. 
Since $d(w_{j-1},w_{j}) = d\bigl( w,s_j(w) \bigr) < R$, we conclude that  $D + 2C < d(w, w_j) \leq D + 2C +R$. Choose $N$ such that $N> D + 2C +R$. Setting $p = s_1 \cdots s_i$ guarantees the claim.

For any point $w$ as in the claim above, the point $p(w)$ lies on a bi-infinite geodesic $c' = pc$ from $\xi$ to a boundary point $\eta' = p(\eta)$. We will see that $\eta' \neq \eta$. 
Recall that any pair of geodesic lines in a $\delta$--hyperbolic space with the same endpoints at infinity must be at a Hausdorff distance at most $2\delta$ from each other. Thus in order to conclude that $\eta\ne\eta'$ it suffices to show that $w$ is at a distance greater than $2 \delta$ from the closest point on $c'$.  
Assume for sake of contradiction that $\eta = \eta'$ and $w$ is within a distance $2\delta$ of some point $y$ on the geodesic $c'$. We observe by Proposition~\ref{prop:idealthin} that the bigon formed by $c$ and $c'$ is $2 \delta$--thin. Thus by Lemma~\ref{lem:AsynchToSynch}, we see that $d\bigl(w,p(w)\bigr)< D$, contradicting our assumption. Therefore $w \in c$ is not within $2 \delta$ of the side $c'$ of the ideal bigon, violating the thin triangles condition.

Now we may form the ideal triangle $\Delta$ with endpoints $\eta, \eta', \xi$. By the above argument, we observe that $w \notin \mathcal{N}_\delta (c')$. Thus we may conclude $w$ is within distance $5 \delta$ of the side $(\eta, \eta')$ of $\Delta$. Finally, $d\bigl( w, c' \bigr) \leq d \bigl( w, p(w) \bigr) \leq N$ by design, where $N$ and $C$ only depend on the orbit of $\xi$. Therefore $w$ is within a distance $N$ of all three sides of the triangle $\Delta$, as $N$ was assumed to be larger than $D$ and therefore $5 \delta$.

We conclude that every point of the horoball $B$ lies uniformly close to a center of an ideal triangle.  Since the horoballs of $U$ lie in finitely many orbits, the proposition is a consequence of Proposition~\ref{prop:UniformlyPerfect}.
\end{proof}

%%%%%%%%%%%%%%%%%%%%%%%%%%%%%%%%%%%%%%%
\section{Horospherical distortion and dilation}
\label{sec:HoroDistortion}
%%%%%%%%%%%%%%%%%%%%%%%%%%%%%%%%%%%%%%%

This section is a detailed study of constant horospherical distortion, a property introduced by Bowditch in his proof of the local connectedness of certain Bowditch boundaries \cite{Bowditch99Boundaries}, and constant horospherical dilation, a condition that is implicit in the definition of a combinatorial horoball.

In this section, we give precise definitions of these two properties in the setting of horoballs admitting a bounded parabolic group action.  The main result of the section is the equivalence of constant horospherical distortion and constant horospherical dilation.

While the main results of this paper involve actions of discrete groups, we formulate the definitions here in the more general setting of locally compact groups (using the framework promoted by Cornulier--de la Harpe in \cite{CornulierHarpe_book}) since the generalization involves little extra effort.  
We prefer to work in this more general setting because it simplifies the discussion of Heintze groups of Carnot type in Section~\ref{sec:carnot} below, although this more general framework is not strictly necessary for applications to negatively curved symmetric spaces.

The following definition is based on \cite{CornulierHarpe_book}, although the definition here is slightly simpler since we consider only second countable groups.

\begin{defn}
\label{defn:geoadapt}
Let $G$ be a second countable, locally compact group.
A metric $d$ on $G$ is \emph{adapted} if it is left invariant, proper, and compatible with the topology of $G$.
An adapted metric is \emph{geodesically adapted} if there exist constants $a >0, b\ge 0$, and $c> 0$ such that for each pair of points $x,x' \in G$ there exists a natural number $n\le a\,d(x,x') + b$ and a sequence $x=x_0,x_1, \ldots, x_n = x'$ such that $d(x_{i-1},x_{i})< c$ for $i=1,\dots,n$.
\end{defn}

\begin{rem}[Existence of metrics]
Suppose $G$ is a second countable, locally compact group.
If $G$ is compactly generated, then $G$ admits a geodesically adapted metric \cite[Prop.~4.B.9]{CornulierHarpe_book}.

If $G$ is a discrete group with finite generating set, the corresponding word metric is geodesically adapted. 

Any connected Lie group $G$ admits a left-invariant length metric compatible with the topology.
In fact, any such metric $d$ is geodesically adapted.
Indeed, since $G$ is locally compact, there exists $\epsilon>0$ such that the closed ball of radius $\epsilon$ about the identity---thus about any point---is compact, so that $d$ is complete.  Thus $d$ is a proper geodesic metric by the Hopf--Rinow Theorem \cite{BH99}.
\end{rem}

\begin{rem}[Uniqueness of metrics]
Let $G$ be a second countable, compactly generated, locally compact group.  Then all geodesically adapted metrics are quasi-isometric by \cite{CornulierHarpe_book}.
(The discrete case is the Fundamental Theorem of Geometric Group Theory, \emph{i.e.}, the classical Schwarz--Milnor lemma \cite{Milnor68}.)
\end{rem}

In the following lemma, we show that two groups with proper, continuous, bounded parabolic actions on the same horoball must be quasi-isometric.  Thus the large-scale geometry of the group is an intrinsic property of the horoball.
We note that this conclusion does not follow directly from the Schwarz--Milnor lemma, since the actions are not cobounded and the parabolic orbits in $Y$ are exponentially distorted.

\begin{lem}
\label{lem:SchwarzMilnorHoroball}
Let $Y$ be a proper $\delta$--hyperbolic space, and let $P$ and $P'$ be two second countable, compactly generated, locally compact groups, each with a continuous, proper, isometric, bounded parabolic action on $X$.
If $d_P$ and $d_{P'}$ are geodesically adapted metrics on $P$ and $P'$ respectively, then $(P,d_P)$ and $(P',d_{P'})$ are quasi-isometric.
\end{lem}

\begin{proof}
The proof of Proposition~\ref{prop:InvariantHoro} shows that $Y$ admits horofunctions $h$ and $h'$ that are invariant with respect to $P$ and $P'$ respectively such that the sup-distance between $h$ and $h'$ is finite.
It follows from the definition of bounded parabolic that for any basepoint $y_0\in Y$, the orbits $Py_0$ and $P'y_0$ are at a finite Hausdorff distance from each other.
Each of $P$ and $P'$ is coarsely equivalent to its orbit with the subspace metric induced from $Y$.  Since the orbits are quasi-isometric subspaces, it follows that $P$ and $P'$ are coarsely equivalent. Any coarse equivalence between geodesically adapted metrics is a quasi-isometry by \cite[Prop.~3.B.9]{CornulierHarpe_book}
\end{proof}

In the following two definitions, the space $Y$ is a proper $\delta$--hyperbolic space that admits a continuous, proper, isometric, bounded parabolic action by a second countable, compactly generated, locally compact group $P$. The reader who prefers discrete groups can simply consider the case that $P$ is finitely generated, and the action is proper, isometric, and bounded parabolic.
Let $y_0\in Y$ be a fixed basepoint, and $d_P$ be a geodesically adapted metric on $P$.
%Observe that if $P$ is a discrete group which admits a geodesically adapted metric, then it must be finitely generated.

\begin{defn}[Constant horospherical distortion]
\label{def:Distortion}
We say $Y$ has \emph{constant horospherical distortion} if there exists a bilipschitz homeomorphism $g \colon [0,\infty) \to [0,\infty)$ and a constant $B$ such that the following holds for all distinct $p,p' \in P$:
\[
   \frac{1}{B} \, \exp{ g( d_Y (p(y_0),p'(y_0)) ) }
   \le d_{\mathcal{T}} (p,p')
   \le B \, \exp{ g ( d_Y (p(y_0),p'(y_0)) ) }
\]
\end{defn}

\begin{defn}[Constant horospherical dilation]
\label{def:Dilation}
%Suppose $P$ has a finite generating set $\mathcal{T}$.
Fix a ray $c$ in $Y$ based at $y_0$.
Let $D=D(5 \delta)$ be the constant given by Lemma~\ref{lem:AsynchToSynch}.
We say $Y$ has \emph{constant horospherical dilation} if there exist a bilipschitz homeomorphism $f \colon [0,\infty) \to [0,\infty)$ and a positive $A'$ such that for each $A$ there exists $D'=D'(A)$ so that the following holds.
Suppose $p,p'$ are distinct elements of $P$ and $t\ge 0$.
\begin{enumerate}
\renewcommand{\theenumi}{\alph{enumi}}

      \item
   \label{item:HoroImpliesWord}
  If $d_Y \bigl( pc(ft), p'c(ft) \bigr) \le D$ then $d_{\mathcal{T}} (p,p') \le A' e^{t}$. 
   \item
   \label{item:WordImpliesHoro}
   If $d_{\mathcal{T}}(p,p') \le A e^{t}$ then $d_Y \bigl( pc(ft), p'c(ft) \bigr) \le D'$. 
\end{enumerate}
A function $f$ as above is a \emph{dilation function} for $Y$.
\end{defn}

By Lemma~\ref{lem:SchwarzMilnorHoroball}, the two preceding definitions do not depend on the choice of group acting bounded parabolically on $P$.
In other words, constant horospherical distortion and dilation are intrinsic properties of a horoball.

We now show that constant horospherical distortion and constant horospherical dilation are equivalent to each other.

\begin{prop}
\label{prop:DistortionDilation}
Let $Y$ be a $\delta$--hyperbolic space admitting a bounded parabolic action by the finitely generated group $P$.
The following are equivalent:
\begin{enumerate}
    \item 
    \label{item:CHDis}
    $Y$ has constant horospherical distortion. 
    \item
    \label{item:CHDil}
    $Y$ has constant horospherical dilation.
\end{enumerate}
\end{prop}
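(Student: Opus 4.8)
The plan is to prove the equivalence by directly relating the two bilipschitz reparametrizations $g$ (from constant horospherical distortion) and $f$ (from constant horospherical dilation) via the geometric estimate of Lemma~\ref{lem:approxDistance}, which says that the $Y$--distance between the basepoints $p(y_0)$ and $p'(y_0)$ is comparable, up to an additive error $F$, to twice the ``synchronization time'' $t_0$ at which the rays $pc$ and $p'c$ first come within distance $D$. This lemma is the bridge: it converts the horoball distance $d_Y\bigl(p(y_0),p'(y_0)\bigr)$ appearing in Definition~\ref{def:Distortion} into a statement about the fellow-traveling parameter $t$ appearing in Definition~\ref{def:Dilation}. Concretely, I expect the correct dictionary to be $f(t) = g^{-1}(t)$ composed appropriately with the factor of $2$, so that the exponential in the distortion inequality matches the exponential threshold $e^t$ in the dilation inequality.

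First I would fix notation and record the key consequence of Lemma~\ref{lem:approxDistance}: if $t_0$ is the first synchronization time for the pair $p,p'$, then $d_Y\bigl(p(y_0),p'(y_0)\bigr) \approx_F 2t_0$. I would also note via Lemma~\ref{lem:lowcenters} and the monotone structure of the horoball that once the rays synchronize (come within $D$), they remain within a bounded distance for all later times, so ``$d_Y\bigl(pc(s),p'c(s)\bigr)\le D$'' is essentially equivalent to ``$s \ge t_0$'' up to the uniform constants $D$ and $5\delta$. To prove \eqref{item:CHDis}$\Rightarrow$\eqref{item:CHDil}, I would take the dilation function to be $f(t) = g^{-1}(2t)$ (or a bilipschitz modification thereof to keep it a self-homeomorphism of $[0,\infty)$), and then verify conditions (\ref{item:HoroImpliesWord}) and (\ref{item:WordImpliesHoro}) by substituting the synchronization estimate into the distortion bounds. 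For (\ref{item:HoroImpliesWord}), the hypothesis $d_Y\bigl(pc(ft),p'c(ft)\bigr)\le D$ forces $ft \ge t_0 - O(1)$, hence $d_Y\bigl(p(y_0),p'(y_0)\bigr) \le 2ft + O(1)$; feeding this through the upper distortion bound and the bilipschitz control on $g$ yields $d_{\mathcal{T}}(p,p') \le A' e^t$ for a suitable $A'$. Condition (\ref{item:WordImpliesHoro}) runs the same argument in reverse, using the lower distortion bound to control $t_0$ from above in terms of $\log d_{\mathcal{T}}(p,p')$.

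For the converse \eqref{item:CHDil}$\Rightarrow$\eqref{item:CHDis}, I would define $g$ in terms of the dilation function $f$ by inverting the above dictionary, roughly $g(s) = f^{-1}(s/2)$, again adjusted to remain bilipschitz. Given distinct $p,p'$, I would locate the synchronization time $t_0$ with $2t_0 \approx_F d_Y\bigl(p(y_0),p'(y_0)\bigr)$, and then apply the two dilation inequalities at the parameter $t$ with $ft = t_0$: condition (\ref{item:HoroImpliesWord}) supplies the upper bound $d_{\mathcal{T}}(p,p') \le A' e^{f^{-1}(t_0)}$, and a contrapositive use of condition (\ref{item:WordImpliesHoro}) supplies the matching lower bound. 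Rewriting $f^{-1}(t_0)$ in terms of $g\bigl(d_Y(p(y_0),p'(y_0))\bigr)$ and absorbing all uniform additive and multiplicative errors into the constant $B$ gives the two-sided distortion inequality of Definition~\ref{def:Distortion}.

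The main obstacle I anticipate is bookkeeping rather than conceptual: one must carefully track how the several additive constants ($D$, $5\delta$, $F$, and the bilipschitz constants of $f$ and $g$) interact when passed through the exponential, and verify that the reparametrization one builds (say $g^{-1}(2t)$) is genuinely a \emph{bilipschitz} self-homeomorphism of $[0,\infty)$ and not merely a homeomorphism. The factor-of-two discrepancy between ``distance between basepoints'' and ``synchronization depth'' must be handled with care, as must the asymmetry between the hypothesis ``$\le D$'' in (\ref{item:HoroImpliesWord}) and the conclusion ``$\le D'$'' in (\ref{item:WordImpliesHoro}); the latter forces one to allow the dilation constant $D'$ to depend on $A$, which is exactly why Definition~\ref{def:Dilation} is stated with the quantifier ``for each $A$ there exists $D'(A)$.'' Ensuring the constants produced are uniform over the choice of basepoint $y_0$—which Lemma~\ref{lem:approxDistance} guarantees for $F$—is the final point requiring attention.
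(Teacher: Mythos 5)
Your proposal follows essentially the same route as the paper's proof: Lemma~\ref{lem:approxDistance} is indeed the bridge identifying $d_Y\bigl(p(y_0),p'(y_0)\bigr)$ with $2f(t_0)$ up to the additive error $F$, the reparametrizations are related by $g = f^{-1}(\cdot/2)$ (equivalently $f=\tfrac12 g^{-1}$), Lemma~\ref{lem:lowcenters} handles persistence of synchronization, and the lower distortion bound comes from a contrapositive application of Definition~\ref{def:Dilation}(\ref{item:WordImpliesHoro}) at a basepoint shifted back far enough that the rays are more than $D'$ apart (plus the degenerate case where $f(t_0)$ is small and one falls back on $d_{\mathcal{T}}(p,p')\ge 1$). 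The only slip is the formula $f(t)=g^{-1}(2t)$ in the forward direction, which is inconsistent with your (correct) $g(s)=f^{-1}(s/2)$ in the converse and should read $f(t)=\tfrac12 g^{-1}(t)$ --- precisely the factor-of-two bookkeeping you flag as needing care.
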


\begin{proof}
We begin by assuming (\ref{item:CHDil}).
Consider a geodesic ray $c$ based at a point $y_0 \in Y$.
Let $g\colon [0,\infty) \to [0,\infty)$ be the homeomorphism given by $g(t) = f^{-1}(t/2)$.
Then $g$ is $\lambda$--bilipschitz for some $\lambda>0$.
Consider any distinct elements $p,p' \in P$.
Let $t_0 = t_0(p,p')$ be the smallest value of $t$ such that $d_Y(pc(ft),p'c(ft)) \leq D$ as in Lemma~\ref{lem:AsynchToSynch}.
By Definition~\ref{def:Dilation}(\ref{item:HoroImpliesWord}), it follows that $d_{\mathcal{T}}(p,p') \le A' e^{t_0}$. By Lemma~\ref{lem:approxDistance}, we have $2f(t_0) \le d_Y(p(y_0),p'(y_0)) + F$.
Applying the definition of $g$, we see that
\begin{equation}
\tag{$*$}
\label{eqn:UpperDistortion}
\begin{split}
    d_{\mct} (p,p') &\leq A' e^{t_0} \\
    &= A' \exp g\bigl( 2f(t_0) \bigr) \\
   & \leq A' \exp g \bigl( d_Y(p(y_0),p'(y_0)) + F \bigr) \\
  & \le B_1 \exp g\bigl( d_Y(p(y_0),p'(y_0)) \bigr),
\end{split}
\end{equation}
where $B_1 = A' e^{\lambda F}$, establishing the upper bound of Definition~\ref{def:Distortion}.

To establish the lower bound of Definition~\ref{def:Distortion}, let $D' = D'(1)$ be the constant given by Definition~\ref{def:Dilation}(\ref{item:WordImpliesHoro}).  Assume without loss of generality that $D' > D$.
Consider any distinct $p,p'\in P$, and let $t_0=t_0(p,p')$ be as defined above.
We consider two cases depending on whether $f(t_0) < \frac{1}{2}(D' + 1  + F)$ or not.
If $f(t_0) < \frac{1}{2}(D' + 1  + F)$ then $d_Y(p(y_0),p'(y_0)) \le 2f(t_0) + 2F \leq D' + 1 +F$.
Setting $B_2 = \exp \bigl( - g(D' + 1 +2F) \bigr)$ gives
\[
  B_2 \exp g\bigl( d_Y(p(y_0),p'(y_0))\bigr) \leq  B_2 \exp{ g(D' + 1 +2F)} = 1
   \le d_{\mathcal{T}}(p,p')
\]
since $p,p'$ are distinct.
On the other hand, if $f(t_0) \ge \frac{1}{2}(D' + 1  + F)$, then we may choose $t_1$ so that
$f(t_1) = f(t_0) - \frac{1}{2}(D' +1 + F)$, where $F$ is as in Lemma~\ref{lem:approxDistance}.  Applying Lemma~\ref{lem:approxDistance} with $c(ft_1)$ playing the role of $y_0$, gives
\[
   d_Y \bigl( pc(ft_1), p'c(ft_1) \bigr) \ge 2\bigl(f(t_0)-f(t_1)\bigr) - F = D'+1 > D',
\]
so by Definition~\ref{def:Dilation}(\ref{item:WordImpliesHoro}), we have
\[
   d_{\mathcal{T}}(p,p')
    > e^{t_1} \ge e^{t_0} e^{-\frac{\mu}{2}(D' +1 + F)}
\]
where $\mu$ is the bilipschitz constant for $f$.
As in \eqref{eqn:UpperDistortion}, it follows that 
\[
d_{\mathcal{T}}(p,p')
      > B_3 \exp g \bigl(d_Y ( p(y_0),p'(y_0) ) \bigr)
      \] 
for a constant $B_3$ not depending on the choice of $p$ and $p'$, completing the proof of~(\ref{item:CHDis}).

We now assume (\ref{item:CHDis}).  Let $B$ and $g$ be the constant and bilipschitz homeomorphism given by Definition~\ref{def:Distortion}.
Consider the bilipschitz map $f$ given by $f(t) = \frac{1}{2} g^{-1}(t)$. Choose distinct $p,p' \in P$.
First we will prove that the conclusion of Definition~\ref{def:Dilation}(\ref{item:HoroImpliesWord}) is satisfied for $t = t_0$, which is defined to be the minimal value for which, $d_Y\bigl( pc(ft),p'c(ft) \bigr) \leq D$.
Let $\lambda$ be the bilipschitz constant of $g$. 
By a calculation similar to \eqref{eqn:UpperDistortion},  the definition of constant horospherical distortion together with Lemma~\ref{lem:approxDistance} implies that 
\begin{align*}
   d_{\mathcal{T}} (p,p')
   &\le B \, \exp g \bigl( d_Y (p(y_0),p'(y_0)) \bigr)  \\
   &\le B \exp g \bigl( 2 f(t_0) +F \bigr) \\
   &\le B e^{ g (2 f(t_0)  )} e^{\lambda F} \\
   &= A' e^{t_0},
\end{align*}
where $A' = B e^{\lambda F}$.
If $t > t_0$ then $d_{\mathcal{T}}(p,p') \le A'e^{t_0} < A' e^t$.
On the other hand, if $t < t_0$, the hypothesis of Definition~\ref{def:Dilation}(\ref{item:HoroImpliesWord}) is never satisfied by the definition of $t_0$, so there is nothing to prove in that case.

To establish Definition~\ref{def:Dilation}(\ref{item:WordImpliesHoro}), let $A>0$ be any constant. 
Choose any $p,p' \in P$, and let $t_0$ be as above.
If $t \geq t_0 + D + 5\delta$, we apply Lemma~\ref{lem:lowcenters} with $pc(ft_0)$ playing the role of $y_0$ to conclude that $d_Y\bigl( pc(ft),p'c(ft) \bigr) \leq D$.
We now assume that  $t < t_0 + D + 5\delta$. Assume for some $p,p'$ we have $d_{\mathcal{T}} (p,p') \leq A e^t$.
Applying the triangle inequality 
to the sequence of points $pc(ft), pc(ft_0), p'c(ft_0), p'c(ft)$, we observe that $d_Y \bigl( pc(ft), p'c(ft) \bigr) \leq 2 \abs{t_0 -t} + D$. Therefore it suffices to show that $\abs{t_0 - t}$ has an upper bound independent of $p$ and $p'$.
If $t \geq t_0$, then $\abs{t_0-t} = t - t_0 < D + 5\delta$ by hypothesis.
On the other hand, if $t \leq t_0$, then because of constant horospherical distortion and Lemma~\ref{lem:approxDistance}, we see that 
\begin{align*}
   A e^t \ge   d_{\mathcal{T}} (p,p')
   &\ge  \frac{1}{B} \, \exp g\bigl( d_Y (p(y_0),p'(y_0)) \bigr) \\
   &\ge \frac{1}{B} \, \exp g\bigl( 2f(t_0) -F\bigr) \\
      &\ge \frac{1}{B} \, \exp \bigl( g( 2f(t_0)) - \lambda F \bigr)  \\
   &\ge \frac{1}{B} e^{-\lambda F} \, e^{t_0}
\end{align*}
and so
\[ 
 {t_0 - t} \leq \ln (A B  e^{\lambda F}).
\]
Therefore we have established both (\ref{item:HoroImpliesWord}) and (\ref{item:WordImpliesHoro}) of Definition~\ref{def:Dilation}, completing the proof of~(\ref{item:CHDil}).
\end{proof}

\begin{rem}[Combinatorial horoballs]
\label{rem:CombinatorialCHD}
Consider a combinatorial horoball $H(\Gamma(P))$ based on a finitely generated group with word metric $\mathcal{T}$.  For vertices $\gamma_1, \gamma_2 \in \Gamma(P)$, if $d_\mathcal{T} (\gamma_1, \gamma_2) \leq A e^n$, then the points $\bigl(\gamma_1, n + \lceil \log(A) \rceil \bigr)$ and $\bigl( \gamma_2, n + \lceil \log(A) \rceil \bigr)$ are connected by an edge. Thus we see that Property~(\ref{item:WordImpliesHoro}) of Definition~\ref{def:Dilation} holds for $D' = 2 \lceil \log(A) \rceil+1$. 
%Therefore combinatorial horoballs for finitely generated groups have constant horospherical dilation.
To see that Property~(\ref{item:HoroImpliesWord}) of Definition~\ref{def:Dilation} holds, observe that the graph metric on $\Gamma(P) \times \{ n \}$ is coarsely equivalent to its subspace metric in $H(\Gamma)$.
If $(\gamma_1,n)$ and $(\gamma_2,n)$ are connected by an edgepath of length $K$ in the graph $\Gamma(P) \times \{n\}$, then the vertices $(\gamma_1,0)$ and $(\gamma_2,0)$ are connected by an edgepath of length at most $K e^n$ in $\Gamma(P)\times \{0\}$.

Similar reasoning also applies to the slightly different horoballs defined in \cite{CannonCooper92,Rebbechi01,BowditchRelHyp}.
As explained in \cite{GrovesManningSisto}, the horoballs considered in those papers are all quasi-isometric to the combinatorial horoballs of \cite{GrovesManning08DehnFilling}.
Thus constant horospherical distortion of the other horoball models could also be derived as a consequence of Corollary~\ref{prop:converse} below.
\end{rem}

\begin{defn}[Cusped spaces]
\label{defn:cuspedspace}
Let $\GaP$ be a relatively hyperbolic pair. If $\GaP$ acts cusp-uniformly on a visual space $X$ such that each element of the set of $\Gamma$--invariant horoballs has constant horospherical distortion, then $X$ is a \emph{cusped space}.
\end{defn}

Note that the condition of being a cusped space is strictly stronger than that of being a weak cusped space, as discussed in the following remarks.

\begin{rem}[On smooth manifolds with pinched negative curvature]
Let $M$ be a Riemannian manifold with pinched negative sectional curvature.
Heintze--Im Hof prove in \cite[Prop.~3.2]{HeintzeImHof77} that horospherical distortion in $M$ has an upper exponential bound and a lower exponential bound, but in general these exponentials have different bases.
One can view this result as a version of the Rauch Comparison Theorem centered at an ideal point.
This phenomenon is heavily dependent on small-scale geometry, which is governed mainly by the sectional curvature.
\end{rem}

The weak conclusion of the Ideal Rauch Comparison Theorem above is not strong enough to imply constant horospherical distortion in general, as shown by the following example. 
%To produce an example of a smooth manifold with pinched negative curvature whose horoballs fail to have constant horospherical distortion, we may take a standard symmetric space, and manually alter the geometry in the cusps, while retaining pinched curvature; we do this explicitly in Remark~\ref{rem:multiplywarped}.
For a more extreme failure of constant horospherical dilation, see \cite{Healy}.

\begin{rem}[Manifolds without constant horospherical distortion]
\label{rem:multiplywarped}
Consider a multiply-warped product of the torus $S^1 \times S^1$ with a ray $0\le t <\infty$ in which the first $S^1$ factor shrinks at a rate $e^{-t}$ and the second $S^1$ factor shrinks at a rate $e^{-bt}$ for any $b>1$.
The universal cover of such a cusp is a horoball that admits a bounded parabolic action by $\Z^2$, yet does not have constant horospherical distortion.

Any complete, noncompact, orientable hyperbolic $3$--manifold of finite volume admits a metric for each $b>1 $ and $\epsilon>0$ that agrees with the given constant curvature metric on a compact core, has a multiply-warped metric as above on each torus cusp, and has all sectional curvatures in the interval $(-b^2-\epsilon,-1+\epsilon)$.
Indeed, on each torus cusp there exists a smooth metric that has constant curvature $\kappa=-1$ on $T^2 \times [0,t_0]$ for some $t_0>0$, agrees with the multiply-warped metric above on $T^2 \times [t_1,\infty)$, and has all sectional curvatures in the desired interval.
For instance, such a metric on $S^1 \times S^1 \times [0,\infty)$ may be defined by
\[
   ds^2 = [e^{-t}dx]^2 + \bigl[ f(t)\,dy \bigr]^2 + dt^2,
\]
where the warping function $f$ interpolates between $e^{-t}$ and $e^{-bt}$.
If $f$ is chosen as in \cite[Appendix]{dpps1}, the desired bounds on sectional curvature follow from the convexity results of \cite[\S 4.1]{Petersen_Riemannian} and curvature computations for multiply-warped product metrics as in \cite[\S 4.2]{Petersen_Riemannian}.

Note that if the parameters $\epsilon$ and $b$ are chosen sufficiently small, one obtains examples as above with all sectional curvatures pinched in an arbitrarily small interval about $-1$.
\end{rem}

It is well known that negatively curved symmetric spaces have pinched negative curvature.  In the next section, we will see that the horoballs of such a symmetric space have constant horospherical distortion.

%%%%%%%%%%%%%%%%%%%%%%%%%%%%%%%%%%%%%%%%%%%%%
\section{Heintze groups of Carnot type}
\label{sec:carnot}
%%%%%%%%%%%%%%%%%%%%%%%%%%%%%%%%%%%%%%%%%%%%%

Although this article is largely focused on the geometry of discrete groups, in this section we examine Heintze groups, which are the negatively curved solvable Lie groups.  We show that Heintze groups of Carnot type always have constant horospherical dilation.
Consequently all negatively curved symmetric spaces have constant horospherical distortion, a result that is implicit in work of Schwartz \cite{Schwartz95}.
For background on Heintze groups of Carnot type, we refer the reader to Heintze \cite{Heintze74_Homogeneous} or Pansu \cite{Pansu89}.

\begin{defn}[Heintze groups of Carnot type]
A connected, solvable Lie group $S$ of dimension at least two is a \emph{Heintze group of Carnot type} if $S$ splits as a semidirect product $S = N \rtimes_\alpha \R$ where $N$ is a nilpotent Lie group whose Lie algebra $\mathfrak{n}$ has lower central series $(\mathfrak{n}^i)_{i \ge 1}$ satisfying the following condition.
For some nonzero constant $\lambda$, the action of $t \in \R$ on $\mathfrak{n}$ is given by $e^{t\alpha}$, where $\alpha$ is a derivation with matrix $\text{diag}(\lambda,2\lambda,\dots,j\lambda,\dots)$ with respect to a linear decomposition $\mathfrak{n} = \bigoplus_{j\ge 1} V_j$ satisfying $\mathfrak{n}^i = \bigoplus_{j \geq i} V_j$.

This linear decomposition is the \emph{Carnot grading} of $\mathfrak{n}$.
We note that the action of $t$ on the linear subspace $V_j$ is multiplication by $e^{tj\lambda}$ for each $j\ge 1$, and the subspace $V_1$ generates $\mathfrak{n}$ as a Lie algebra. 
\end{defn}

A Heintze group of Carnot type is simply connected and admits a left-invariant metric of negative sectional curvature \cite{Heintze74_Homogeneous}.
Each Heintze group of Carnot type has natural horocyclic coordinates, whose geometric properties are described below.

\begin{defn}[Horocylic coordinates]
\label{defn:HorocyclicCoords}
A Heintze group of Carnot type $S = N \rtimes_\alpha \R$ is homeomorphic to $N \times \R$, so $N$ is simply connected and the exponential map $\exp\colon \mathfrak{n}\to N$ is a diffeomorphism (see \cite[Thm.~XII.2.1]{Hochschild_LieGroups}).
The automorphism $e^{t\alpha}$ of $\mathfrak{n}$ induces a unique automorphism of $N$, also denoted $e^{t\alpha}$, such that $e^{t\alpha}\of\exp = \exp\of e^{t\alpha}$.
Therefore the two maps denoted by $e^{t\alpha}$ have the same formula in exponential coordinates.
The group law on the set $N\times \R$ has the form
\[
  (x_1,t_1)(x_2,t_2) = \bigl(x_1\,e^{-t_1\alpha}\!(x_2), t_1 + t_2\bigr).
\]
In particular, left-translation by $-t \in \R$ maps $(x,t)$ to $(e^{t\alpha}x,0)$.
Let $g_0$ be a left-invariant Riemannian metric on $N$, and endow $S$ with the left-invariant metric $g = g_t \oplus dt^2$, where $g_t=(e^{t\alpha})^*g_0$.
If the constant $\lambda$ is sufficiently large, this metric has negative sectional curvature by a result of Heintze \cite{Heintze74_Homogeneous}.
(The precise value of $\lambda$ does not affect the isomorphism type of $S$, so we assume without loss of generality that $\lambda$ is sufficiently large.)
Furthermore, all geodesic lines $c_x(t) = (x,t)$ converge to a common point $\omega$ as $t \to -\infty$.  Indeed let $d$ denote the Riemannian length metric on $S$, and let $\rho_t$ denote the length metric on $N_t = N\times \{t\}$ with its induced Riemannian metric $g_t$.
Then $\rho_t$ satisfies
\[
  d\bigl( c_x(t),c_y(t) \bigr) \le \rho_t(x,y) = \rho_0 \bigl( e^{t\alpha}x, e^{t\alpha}y \bigr).
\]
It follows that the natural homomorphism $S \to \R$ with kernel $N$ is a Busemann function on $S$ centered at $\omega$ whose associated horospheres are the sets $N_t$.
Furthermore the action of $N$ on each horoball $N \times (-\infty,t)$ is bounded parabolic.
\end{defn}

\begin{prop}
\label{prop:SymSpaceCHD}
Let $G = N \rtimes_\alpha \R$ be a Heintze group of Carnot type endowed with a left-invariant Riemannian metric as in Definition~\ref{defn:HorocyclicCoords}, and let $\omega \in \boundary G$ be the unique boundary point fixed by $G$.
Then each horoball in $G$ centered at $\omega$ has constant horospherical dilation.
\end{prop}

\begin{proof}
Let $\mathfrak{n} = T_e N$ be the Lie algebra of $N$, and let $\mathfrak{n} = \bigoplus V_j$ be its Carnot grading.
Let $H$ be the sub-bundle of the tangent bundle $TN$ consisting of all left translates of $V_1$ under the action of $N$.
A piecewise smooth curve in $N$ is \emph{horizontal} if it is tangent to $N$.
Let $g_0$ be the given left-invariant Riemannian metric on $N$.
Since $V_1$ generates the Lie algebra $\mathfrak{n}$, we may consider the \emph{sub-Riemannian distance} $d_0$ on $N$ given by the infimum of $g_0$--lengths of horizontal curves joining points.
This rule defines a geodesically adapted metric in the sense of Definition~\ref{defn:geoadapt} (see Sections 3.2, 3.3, and 7.4 of \cite{ABB_Subriemannian}).
By definition, the semidirect product action of $t\in\R$ on $V_1$ is multiplication by $e^{t \lambda}$.  Hence the action of $t$ on $N$ dilates the distance $d_0$ by a factor of $e^{t\lambda}$.

Similarly let $d_t$ denote the analogous sub-Riemannian distance on the horosphere $N_t$.
Then for any $x,y \in N$ we have
\[
   d_t(x,y) = d_0(e^{t\alpha}x,e^{t\alpha}y)
   = e^{t\lambda} d_0(x,y).
\]
Since $N_0$ is a closed subgroup of $G$, the distance on $N_0$ obtained by restricting the Riemannian length metric on $G$ is an adapted metric, and thus is coarsely equivalent (but not quasi-isometric) to $d_0$.
All metrics under discussion are invariant under the left-action of $G$, and hence are invariant under left-translation by any $t \in \R$.
Hence $d_t$ is coarsely equivalent to the distance on $N_t$ obtained by restricting the Riemannian length in $G$, with coarse Lipschitz parameters that do not depend on the choice of $t$.
Therefore the horoball $B = N\times (-\infty,0)$ has constant horospherical dilation, which completes the proof since each horoball $N\times (-\infty,t)$ is isometric to $B$. 
\end{proof}

Every negatively curved symmetric space $X$ may be given the structure of a Heintze group of Carnot type such that its symmetric Riemannian metric has the form described in Definition~\ref{defn:HorocyclicCoords} (see, for example, \cite{Heintze74_Homogeneous}).
Since the isometry group of $X$ acts transitively on the sphere at infinity, one obtains the following corollary.

\begin{cor}
Each horoball in a negatively curved symmetric space has constant horospherical distortion. \qed
\end{cor}

%%%%%%%%%%%%%%%%%%%%%%%%%%%%%%%%%%%%%%%%%%%%%
\section{Cusp extensions}
\label{sec:CuspExtension}
%%%%%%%%%%%%%%%%%%%%%%%%%%%%%%%%%%%%%%%%%%%%%

In this section we will study how maps between relatively hyperbolic pairs induce corresponding maps with desirable properties between their cusped spaces under the right conditions, and prove Theorem~\ref{thm:CuspExtension}. Recall that maps $f$ and $g$ between metric spaces are \emph{close}, denoted $f \cong g$, if they are at a finite distance in the supremum metric. %In this section, results will be stated for compactly generated, locally compact (cglc) groups, however many examples of interest are discrete groups. In particular a finitely generated group, such as a peripheral subgroup in a relatively hyperbolic pair, is a cglc group.

\begin{prop}
\label{prop:HoroballExtension}
Let $Y_1$ and $Y_2$ be $\delta$--hyperbolic spaces with bounded parabolic actions by second countable, compactly generated, locally compact groups $P_1$ and $P_2$ respectively.
Assume each $Y_i$ has constant horospherical distortion.
Choose a basepoint $y_i \in Y_i$ for each $i$.
Any coarsely Lipschitz map $\phi\colon P_1 \to P_2$ induces a coarsely Lipschitz map $\hat\phi\colon Y_1 \to Y_2$,
in the sense that 
\[
   d\bigl( \hat{\phi} (py_1), \phi(p)y_2   \bigr)
\]
is uniformly bounded for all $p \in P_1$.
Furthermore, $\hat\phi$ satisfies the functorial properties:
$\hat\phi\hat\psi \cong \widehat{\phi\psi}$
and $\widehat{\id} \cong \id$.
\end{prop}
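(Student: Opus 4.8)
The plan is to construct $\hat\phi$ explicitly on orbit points and then extend it coarsely to all of $Y_1$ using the bounded parabolic structure. First I would define $\hat\phi$ on the orbit $P_1 y_1$ by the rule $\hat\phi(p y_1) = \phi(p) y_2$, so that the asserted estimate $d\bigl(\hat\phi(py_1), \phi(p)y_2\bigr)$ is bounded by construction on the orbit. The main task is to verify that this orbit map is coarse Lipschitz as a map $P_1 y_1 \to Y_2$, and then to promote it to a coarse Lipschitz map on all of $Y_1$. For the coarse Lipschitz estimate, the key is to translate distances in $Y_1$ into word distances in $P_1$ via constant horospherical distortion, apply the coarse Lipschitz hypothesis on $\phi\colon P_1 \to P_2$, and translate back into distances in $Y_2$ via the corresponding distortion inequality.

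\emph{Orbit estimate.} Given $p, p' \in P_1$, I would apply Definition~\ref{def:Distortion} in $Y_1$ to relate $d_{\mathcal{T}_1}(p,p')$ to $\exp g_1\bigl(d_{Y_1}(p y_1, p' y_1)\bigr)$, use the $(\lambda,\epsilon)$--coarse Lipschitz bound on $\phi$ to control $d_{\mathcal{T}_2}\bigl(\phi(p),\phi(p')\bigr)$ in terms of $d_{\mathcal{T}_1}(p,p')$, and then invoke Definition~\ref{def:Distortion} in $Y_2$ to recover a bound on $d_{Y_2}\bigl(\phi(p)y_2, \phi(p')y_2\bigr)$. Because $g_1$ and $g_2$ are bilipschitz homeomorphisms of $[0,\infty)$ and the exponential/logarithm are inverse, chaining these inequalities yields an affine bound $d_{Y_2}\bigl(\phi(p)y_2,\phi(p')y_2\bigr) \le \lambda' d_{Y_1}(p y_1, p' y_1) + \epsilon'$. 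This is the heart of the argument: distortion converts the purely metric coarse Lipschitz hypothesis on groups into a coarse Lipschitz statement between horoballs, with the bilipschitz nature of $g_1, g_2$ ensuring the exponential distortions cancel up to multiplicative constants.

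\emph{Extension off the orbit.} Since $P_1$ acts boundedly parabolically on $Y_1$, every point of $Y_1$ lies within a uniform distance $M$ of some orbit point $p y_1$ (Definition~\ref{def:BoundedParabolic} gives that $Y_1$ lies in a finite neighborhood of the union of translated rays $p c$, and each such ray starts at $p y_1$; combined with Lemma~\ref{lem:AsynchToSynch} one controls how orbit points track the rays). I would therefore define $\hat\phi(z) = \phi(p)y_2$ for a choice of nearby orbit representative $p y_1$. The orbit estimate, together with the uniform proximity $M$, shows this extension is well-defined up to bounded error and remains coarse Lipschitz on all of $Y_1$; the asserted uniform bound on $d\bigl(\hat\phi(py_1),\phi(p)y_2\bigr)$ holds since on orbit points we may take the representative to be $p$ itself.

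\emph{Functoriality.} For $\hat\phi\hat\psi \cong \widehat{\phi\psi}$, I would evaluate both maps on an orbit point: on $p y_1$, the composite $\hat\phi\hat\psi$ first sends $p y_1$ to within bounded distance of $\psi(p) y_2$, then applies $\hat\phi$, landing within bounded distance of $\phi\bigl(\psi(p)\bigr) y_3 = (\phi\psi)(p) y_3$, which agrees with $\widehat{\phi\psi}(p y_1)$ up to the same bounded error; since every point of $Y_1$ is uniformly near an orbit point and all maps are coarse Lipschitz, the two composites agree in the supremum metric. The identity relation $\widehat{\id}\cong\id$ is immediate from the defining formula on orbit points. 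The main obstacle is the orbit estimate, specifically ensuring that the bilipschitz distortion functions $g_1$ and $g_2$ combine correctly so that the exponential growth on one side is exactly undone by the logarithmic behavior inherited from the other; care is needed because the two horoballs may have genuinely different distortion functions, and the coarse Lipschitz constant of $\hat\phi$ depends on the bilipschitz constants of both $g_1$ and $g_2$ as well as on $\lambda, \epsilon$.
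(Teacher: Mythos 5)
There is a genuine gap in your extension step, and it is fatal as written. You assert that ``every point of $Y_1$ lies within a uniform distance $M$ of some orbit point $p y_1$.'' This is false: Definition~\ref{def:BoundedParabolic} only says that $Y_1$ lies in a finite neighborhood of the \emph{union of the rays} $pc$, not of the orbit $P_1 y_1$. A point $pc(t)$ deep in the horoball is at distance roughly $t$ from the orbit, so the orbit is not quasidense and ``choose a nearby orbit representative'' is not available for such points. If you nevertheless send a deep point near $pc(t)$ to $\phi(p)y_2$, the resulting map collapses the depth direction of the horoball, and this destroys both conclusions of the proposition. Concretely, if $d_{Y_1}\bigl(pc(t),p'c(t)\bigr)\le D$ with $t$ large, Lemma~\ref{lem:approxDistance} gives $d_{Y_1}(py_1,p'y_1)\approx 2t$, so even your (correct) orbit estimate only bounds $d_{Y_2}\bigl(\phi(p)y_2,\phi(p')y_2\bigr)$ by something growing linearly in $t$ while the source distance stays $\le D$; hence the extension is not coarse Lipschitz. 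It also ruins functoriality: with your formula $\widehat{\id}$ sends $pc(t)$ to $py_1$, at distance $t$ from the input, so $\widehat{\id}\not\cong\id$.

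The paper's proof avoids this by defining $\hat\phi$ on the genuinely quasidense set $\bigcup_{p\in P_1}pc_1$ by $\hat\phi\bigl(pc_1(f_1t)\bigr)=\phi(p)c_2(f_2t)$, where $f_i$ is a dilation function for $Y_i$; the depth parameter is carried along, reparametrized by $f_2\circ f_1^{-1}$. The coarse Lipschitz property is then verified at depth $t$ using constant horospherical dilation (Definition~\ref{def:Dilation}, available by Proposition~\ref{prop:DistortionDilation}): if $d_{Y_1}\bigl(pc_1(f_1t),p'c_1(f_1t)\bigr)\le D$ then $d_{\mathcal{T}}(p,p')\le A'e^{t}$, the coarse Lipschitz hypothesis on $\phi$ gives $d_{\mathcal{T}}\bigl(\phi(p),\phi(p')\bigr)\le A e^{t}$ for a new constant $A$, and part~(\ref{item:WordImpliesHoro}) of the definition in $Y_2$ gives $d_{Y_2}\bigl(\phi(p)c_2(f_2t),\phi(p')c_2(f_2t)\bigr)\le D'(A)$. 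Your ``orbit estimate'' is essentially the $t=0$ instance of this and is fine, but the exponential bookkeeping must be done at every depth, not just on the horosphere. Your functoriality argument would go through once the map is defined depth-preservingly, since the reparametrizations compose correctly and the identity is recovered on the nose up to the well-definedness error.
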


\begin{proof}
Choose a geodesic ray $c_i$ in $Y_i$ with initial point $c_i(0) = y_i$.
It suffices to define $\hat\phi$ on the quasidense subset of $Y_1$ consisting of the union of rays $\bigcup_{p \in P_1} pc_1$.
Let $f_i\colon [0,\infty) \to [0,\infty)$ be a dilation function for the constant horospherical dilation of $Y_i$.
Define $\hat\phi$ by the rule
\[
   \hat\phi\bigl( pc_1(f_1 t) \bigr)
   =
   \phi(p)c_2(f_2 t).
\]
Following the same argument as in the proof of Schwartz \cite[Lem.~5.3]{Schwartz95}, the definition of constant horospherical dilation and the definition of coarsely Lipschitz provide constants $D,D'>0$
such that whenever $a$ and $b$ lie in the domain of $\hat\phi$, we have
\begin{equation}
\tag{$*$}
\label{eqn:CoarseLip}
   d ( a,b ) < D
   \quad \Longrightarrow \quad
   d \bigl( \hat\phi(a),\hat\phi(b) \bigr) < D',
\end{equation}
which implies that $\hat\phi$ is coarsely Lipschitz by Lemma~\ref{lem:LengthSpaceLipschitz}.

We have not shown that $\hat\phi$ is well-defined, since a point $a$ could lie in the intersection of two rays $pc \cap p'c$.  In this case, the definition of $\hat\phi$ requires making an arbitrary choice of ray $pc$ containing $a$.
The proof of \eqref{eqn:CoarseLip} shows that any two such choices lead to values $\hat\phi(a)$ and $\hat\phi'(a)$ that are separated by a distance less than $D'$.
Therefore the closeness equivalence class of $\hat\phi$ is uniquely determined and the stated functorial properties hold regardless of the arbitrary choices used in the various extensions.
\end{proof}

Applying Lemma~\ref{lem:LengthSpaceLipschitz}, we get the following conclusion regarding the cusp-extension of quasi-isometries.

\begin{cor}
\label{cor:HoroballExtensionQI}
Let $Y_1$ and $Y_2$ be $\delta$--hyperbolic with bounded parabolic actions by second countable, compactly generated, locally compact groups $P_1$ and $P_2$.  Suppose $Y_1$ and $Y_2$ each have constant horospherical distortion.
Each quasi-isometry $P_1\to P_2$ induces a quasi-isometry $Y_1 \to Y_2$.
An isomorphism $P_1\to P_2$ induces a roughly equivariant quasi-isometry $Y_1 \to Y_2$. \qed
\end{cor}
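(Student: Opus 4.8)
The plan is to deduce the corollary from Proposition~\ref{prop:HoroballExtension} and the functorial identities $\hat\phi\hat\psi \cong \widehat{\phi\psi}$ and $\widehat{\id}\cong\id$ recorded there. Given a quasi-isometry $\phi\colon P_1 \to P_2$, first fix a coarse Lipschitz coarse inverse $\psi\colon P_2 \to P_1$, so that $\phi\psi \cong \id$ and $\psi\phi \cong \id$. Applying Proposition~\ref{prop:HoroballExtension} to $\phi$ and to $\psi$ produces coarse Lipschitz maps $\hat\phi\colon Y_1 \to Y_2$ and $\hat\psi\colon Y_2 \to Y_1$. These identities nearly finish the argument; the one missing ingredient is that the extension operation respects the equivalence relation $\cong$.

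The first substantive step is therefore to show that coarsely equivalent maps have coarsely equivalent extensions: if $\alpha,\beta\colon P_2 \to P_2$ satisfy $M = \sup_{p} d_{\mathcal{T}}\bigl(\alpha(p),\beta(p)\bigr) < \infty$, then $\hat\alpha \cong \hat\beta$. On the quasidense domain of the extensions the two values are $\alpha(p)c_2(f_2 t)$ and $\beta(p)c_2(f_2 t)$, that is, the images of the common point $c_2(f_2 t)$ under the isometries $\alpha(p)$ and $\beta(p)$. Since $d_{\mathcal{T}}\bigl(\alpha(p),\beta(p)\bigr) \le M \le M e^{t}$ for every $t \ge 0$, part~(\ref{item:WordImpliesHoro}) of Definition~\ref{def:Dilation}---available through constant horospherical distortion and Proposition~\ref{prop:DistortionDilation}---bounds $d_{Y_2}\bigl( \alpha(p)c_2(f_2 t),\, \beta(p)c_2(f_2 t)\bigr)$ by $D'(M)$, uniformly in $p$ and $t$. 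Hence $\hat\alpha \cong \hat\beta$.

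With this in hand, chaining the functorial identities gives
\[
   \hat\phi\hat\psi \cong \widehat{\phi\psi} \cong \widehat{\id} \cong \id_{Y_2},
\]
and symmetrically $\hat\psi\hat\phi \cong \id_{Y_1}$. Since $\hat\phi$ and $\hat\psi$ are both coarse Lipschitz, Definition~\ref{def:quasiisometry} shows that $\hat\phi$ is a quasi-isometry with coarse inverse $\hat\psi$, proving the first assertion.

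For the second assertion, take $\phi$ to be an isomorphism and let $P_1$ quasi-act on $Y_2$ through $\phi$, by $\rho'(p) = \phi(p)$; write $\rho$ for the given isometric action of $P_1$ on $Y_1$. On the quasidense subset the extension is genuinely equivariant: writing a domain point as $qc_1(f_1 t)$ and using that $P_1$ acts on $Y_1$ by isometries permuting the rays, one computes
\[
   \hat\phi\bigl( p(qc_1(f_1 t))\bigr) = \hat\phi\bigl((pq)c_1(f_1 t)\bigr) = \phi(pq)c_2(f_2 t) = \phi(p)\bigl(\phi(q)c_2(f_2 t)\bigr) = \rho'(p)\bigl(\hat\phi(qc_1(f_1 t))\bigr).
\]
Thus $\hat\phi\of\rho(p)$ and $\rho'(p)\of\hat\phi$ agree on a quasidense set. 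Because $p$ acts by isometries, both maps are coarse Lipschitz with constants independent of $p$, so two maps that agree on a set of bounded codensity are within a distance that is uniformly bounded in $p$; this is precisely rough equivariance in the sense of Definition~\ref{def:roughequivariance}. I expect the only delicate point to be the uniformity in the second paragraph: one must confirm that $D'(M)$ depends on neither $p$ nor $t$, which is exactly the content of the constant horospherical dilation inequality. Everything else is a formal consequence of the functoriality already established in Proposition~\ref{prop:HoroballExtension}.
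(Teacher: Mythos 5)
Your argument is correct and is exactly the route the paper intends: the corollary is stated with an immediate \qed{} as a formal consequence of Proposition~\ref{prop:HoroballExtension}, and your two supplied details---that the extension operation respects $\cong$ (via Definition~\ref{def:Dilation}(\ref{item:WordImpliesHoro}) with $A=M$) and the equivariance computation on the quasidense set---are precisely what the paper leaves implicit. The only nitpick is that your chain of equalities in the equivariance step holds exactly only for the canonical choice of ray $(pq)c_1$ through the point $p(qc_1(f_1t))$; for an arbitrary choice it holds up to the uniform error $D'$ noted in the proof of Proposition~\ref{prop:HoroballExtension}, which still yields rough equivariance.
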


In the following definition, we consider finitely generated groups equipped with a word metric for a finite generating set.
By Osin \cite{Osin06}, if $(\Gamma,\mathbb{P})$ is relatively hyperbolic and $\Gamma$ is finitely generated, then so is each $P \in \mathbb{P}$.

\begin{defn}
\label{def:mapofpairs}
Let $\GaP$ and $\GaPp$ be relatively hyperbolic such that $\Gamma$ and $\Gamma'$ are finitely generated. A \emph{coarsely Lipschitz map of pairs} is a map $\phi \colon \Gamma \rightarrow \Gamma'$ that is coarsely Lipschitz with respect to the word metric
such that there exists a constant $R$ with the following property: for each $P \in \mathbb{P}$ and $\gamma \in \Gamma$, there exists $P' \in \mathbb{P}'$ and $\gamma' \in \Gamma'$ such that $\phi(\gamma P) \in \mathcal{N}_R(\gamma'P')$. 
(We note that in this setting all coarsely Lipschitz maps are large-scale Lipschitz \cite[Prop.~3.B.9]{CornulierHarpe_book}.)
A \emph{quasi-isometry of pairs} is a coarsely Lipschitz map of pairs that has a quasi-inverse which is also a coarsely Lipschitz map of pairs.
\end{defn}

The next theorem follows directly from Proposition~\ref{prop:HoroballExtension} and Corollary~\ref{cor:HoroballExtensionQI}.

\begin{thm}[Cusp extension]
\label{thm:CuspExtension}
Let $\GaP$ and $\GaPp$ be relatively hyperbolic pairs which admit cusp-uniform actions on respective cusped spaces $X$ and $X'$.
Every coarsely Lipschitz map of pairs $\phi \colon \GaP \to \GaPp$ induces a coarsely Lipschitz map $\hat{\phi}\colon X \to X'$.

Any quasi-isometry of pairs $\GaP \to \GaPp$ induces a quasi-isometry $X \to X'$ and therefore also a quasisymmetric homeomorphism $\partial X \to \partial X'$.

Furthermore, if $\GaP = \GaPp$ then any isomorphism $\GaP \to \GaPp$ induces a roughly equivariant quasi-isometry $X \to X'$ and an equivariant quasisymmetry.
\qed
\end{thm}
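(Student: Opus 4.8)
The plan is to construct $\hat\phi$ piecewise along the cusp-uniform decomposition $X = (X - U) \cup U$ of Definition~\ref{def:relhyp} and then to read off the quasi-isometry and equivariance statements from the functorial properties already in hand; this is exactly the assembly promised just before the statement. First I would handle the truncated space. Choose a basepoint $x_0$ on the bounding horosphere of one horoball, so that $x_0 \in X - U$, and similarly $x_0' \in X' - U'$. Since $\Gamma$ acts properly, cocompactly, and isometrically on the length space $X - U$, the orbit map $\gamma \mapsto \gamma x_0$ is a quasi-isometry of $\Gamma$ (with a word metric) onto a quasidense subset of $X - U$ by the Milnor--\v{S}varc lemma, and likewise for $\Gamma'$. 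As $\phi$ is coarse Lipschitz, the rule $\hat\phi(\gamma x_0) = \phi(\gamma)\, x_0'$ then defines a coarse Lipschitz map on $\Gamma x_0$, hence on all of $X - U$.

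Next I would extend into the horoballs. Because $\mathbb{P}$ is finite, there are finitely many $\Gamma$--orbits of horoballs; fix a representative $B_P$ for each $P \in \mathbb{P}$, stabilized by $P$, with its basepoint in the orbit $\Gamma x_0$. By Remark~\ref{rem:inducedhoroball} the action of $P$ on $B_P$ is bounded parabolic, and $B_P$ has constant horospherical distortion by hypothesis. The map-of-pairs condition (Definition~\ref{def:mapofpairs}) carries each peripheral coset $\gamma P$ into $\nbd{\gamma' P'}{R}$ for a coset $\gamma' P'$ and a uniform $R$; since peripheral subgroups of relatively hyperbolic groups are undistorted, this produces a coarse Lipschitz map $\phi_P \colon P \to P'$ between the peripheral groups with their word metrics, with constants independent of the coset. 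Proposition~\ref{prop:HoroballExtension} extends $\phi_P$ to a coarse Lipschitz $\widehat{\phi_P}\colon B_P \to B'_{P'}$ agreeing with $\phi_P$ on the base orbit up to bounded error, and I would transport $\widehat{\phi_P}$ to each translate $\gamma B_P$ using the isometries $\gamma$ and $\gamma'$ of the two actions. Finiteness of $\mathbb{P}$ together with the use of isometric translates keeps all constants uniform across the infinitely many horoballs.

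The main obstacle is the gluing. Each bounding horosphere lies simultaneously in $X - U$ and in a horoball, so the two prescriptions for $\hat\phi$ there must agree up to a uniformly bounded error. This holds because Proposition~\ref{prop:HoroballExtension} pins the horoball extension to $\phi$ on the base orbit while the truncated-space formula also tracks $\phi$ there, so both values lie within bounded distance of $\phi(\gamma p)\, x_0'$. Granting this consistency, the pieces combine to a single map defined on the union of $\Gamma x_0$ with the $P$--rays filling the horoballs, a quasidense subset of $X$, that is coarse Lipschitz on each piece with uniform constants. Since $X$ is a length space, the same length-space argument used in the proof of Proposition~\ref{prop:HoroballExtension} upgrades this to a globally coarse Lipschitz $\hat\phi\colon X \to X'$, which proves the first assertion.

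For the remaining assertions I would use functoriality. If $\phi$ is a quasi-isometry of pairs with coarse inverse $\psi$ (again a map of pairs), then $\hat\psi$ is coarse Lipschitz, and $\hat\psi\hat\phi \cong \id_X$: on the truncated space this is $\psi\phi \cong \id_\Gamma$, and on the horoballs it is the identity $\hat\psi\hat\phi \cong \widehat{\psi\phi} \cong \widehat{\id} \cong \id$ from Proposition~\ref{prop:HoroballExtension}; symmetrically $\hat\phi\hat\psi \cong \id_{X'}$. Thus $\hat\phi$ is a quasi-isometry, and Theorem~\ref{thm:QItoQS} supplies the quasisymmetric homeomorphism $\boundary X \to \boundary X'$. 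Finally, suppose $\GaP = \GaPp$ and $\phi$ is an isomorphism. On the orbit, $\hat\phi(\gamma\eta\, x_0) = \phi(\gamma)\phi(\eta)\, x_0' = \phi(\gamma)\,\hat\phi(\eta x_0)$, so $\hat\phi$ is equivariant there up to bounded error, while Corollary~\ref{cor:HoroballExtensionQI} provides roughly equivariant extensions on the horoballs. Combining these yields a roughly equivariant quasi-isometry in the sense of Definition~\ref{def:roughequivariance}, and since bounded errors are invisible at infinity, the induced boundary quasisymmetry is equivariant.
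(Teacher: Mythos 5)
Your proposal is correct and follows essentially the same route as the paper, which dispatches this theorem in one line as a direct consequence of Proposition~\ref{prop:HoroballExtension} and Corollary~\ref{cor:HoroballExtensionQI}; your writeup is the intended assembly of those results (Milnor--\v{S}varc on the truncated space, the horoball extension on each peripheral coset, gluing along horospheres, and functoriality for the quasi-isometry and rough-equivariance claims). The only ingredient you use that the paper leaves tacit is the undistortedness of peripheral subgroups, needed to convert the coset condition of Definition~\ref{def:mapofpairs} into a coarse Lipschitz map $P \to P'$ in their intrinsic word metrics; this is a standard fact and does not constitute a gap.
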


In the above set up, the map $\hat{\phi}$ is a \emph{cusp extension} of $\phi$. 

We get the following as a corollary of Theorem~\ref{thm:CuspExtension}, which is a generalization to the relatively hyperbolic setting of a well known result about hyperbolic groups.

\begin{cor}[Conformal Gauge]
\label{cor:conformalgauge}
Let $\GaP$ be a relatively hyperbolic pair with finitely generated peripheral subgroups, and let $X$ be  cusped space for $\GaP$. If $\mathcal{G}$ is the conformal gauge of $\partial X$, then whenever $Y$ is a cusped space for $\GaP$ such that $\partial Y$ is equipped with a visual metric, we have $\partial Y \in \mathcal{G}$.
\end{cor}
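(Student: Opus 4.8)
The plan is to deduce this directly from Theorem~\ref{thm:CuspExtension} applied to the identity automorphism of $\GaP$. Since $X$ and $Y$ are both cusped spaces for the \emph{same} relatively hyperbolic pair, we are in the situation $\GaP=\GaPp$ of the ``furthermore'' clause of that theorem, with $X'=Y$. The identity map $\Gamma\to\Gamma$ is an isomorphism of pairs, as it preserves the cosets of peripheral subgroups exactly, so Theorem~\ref{thm:CuspExtension} produces a roughly equivariant quasi-isometry $X\to Y$ together with an equivariant quasisymmetry $\partial X\to\partial Y$ of the visual boundaries.

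The next step is to match this quasisymmetry with the canonical topological identification of the two boundaries. By Definition~\ref{def:BoundaryTop} the Bowditch boundary is well defined, meaning $\partial X$ and $\partial Y$ are identified by a canonical $\Gamma$--equivariant homeomorphism. I would observe that the equivariant boundary map produced above is itself a $\Gamma$--equivariant homeomorphism, and so must coincide with this canonical identification. Indeed, any equivariant self-homeomorphism of the Bowditch boundary preserves the attracting fixed point of each loxodromic element, hence fixes the dense set of such fixed points, and is therefore the identity; this forces uniqueness of the equivariant homeomorphism $\partial X\to\partial Y$. Under this canonical identification $\partial X\cong\partial Y=\partial\GaP$, a visual metric on $\partial Y$ pulls back to a metric on $\partial X$ quasisymmetric to the given visual metric on $\partial X$, i.e.\ a metric lying in $\mathcal{G}$.

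Finally, Remark~\ref{rem:hypbdryquasisym} records that all visual metrics on $\partial X$ lie in a single conformal gauge, and likewise for $\partial Y$. Consequently the conclusion $\partial Y\in\mathcal{G}$ does not depend on which visual metrics are chosen on the two boundaries, which gives the stated form of the corollary.

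The metric content is entirely absorbed into Theorem~\ref{thm:CuspExtension}, so I expect the only genuinely new point---and the main thing to get right---to be the middle step: verifying that the quasisymmetry furnished by the cusp extension is compatible with the canonical identification of Bowditch boundaries, so that the assertion ``$\partial Y\in\mathcal{G}$'' is literally meaningful. This rests on the uniqueness of the equivariant boundary homeomorphism, which guarantees that the transported quasisymmetric structure is attached to the correct identification of the underlying point sets.
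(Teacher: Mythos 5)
Your proposal is correct and follows the same route as the paper, which derives the corollary directly from Theorem~\ref{thm:CuspExtension} applied to the identity isomorphism of $\GaP$ (the paper states it as an immediate consequence with no further argument). Your additional middle step---using uniqueness of the $\Gamma$--equivariant boundary homeomorphism to justify identifying the underlying point sets of $\partial X$ and $\partial Y$ before comparing gauges---is extra care the paper leaves implicit, not a divergence in method.
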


If $\GaP$ is relatively hyperbolic, its most natural family of subgroups from a geometric point of view are the relatively quasiconvex subgroups (see \cite{Hruska10RelQC}).
A relatively quasiconvex subgroup $H$ has a natural induced peripheral structure $\mathbb{Q}$ such that the pair $(H,\mathbb{Q})$ is itself relatively hyperbolic.

The following corollary interprets a theorem of Manning--Mart\'{i}nez in the context of this paper.
Manning--Mart\'{i}nez \cite{ManningMartinez10} prove this result for the cusped Cayley graphs of \cite{GrovesManning08DehnFilling}. The general case follows immediately from Remark~\ref{rem:CombinatorialCHD} and Theorem~\ref{thm:CuspExtension}.

\begin{cor}[Relative quasiconvexity]
\label{cor:Quasiconvexity}
Let $(\Gamma,\mathbb{P})$ be relatively hyperbolic, and let $(H,\mathbb{Q})$ be a relatively quasiconvex subgroup with its induced relatively hyperbolic structure.
Suppose $\Gamma$ and $H$ are each finitely generated, and
suppose $(H,\mathbb{Q})$ and $(\Gamma,\mathbb{P})$ have cusped spaces $Y$ and $X$ respectively.
Then the inclusion $H \to \Gamma$ induces a quasi-isometric embedding $Y \to X$ and a roughly $H$--equivariant quasisymmetric embedding $\boundary Y \to \boundary X$. \qed
\end{cor}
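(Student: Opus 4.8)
The plan is to reduce the statement to the cusp-extension machinery already assembled in Theorem~\ref{thm:CuspExtension} and its horoball-level predecessors, together with the characterization of relative quasiconvexity in terms of the cusped Cayley graph supplied by Manning--Mart\'{i}nez \cite{ManningMartinez10}. The two model spaces $Y$ and $X$ are cusped spaces for the pairs $(H,\mathbb{Q})$ and $(\Gamma,\mathbb{P})$ respectively, and by Theorem~\ref{thm:introcuspedspaces}(\ref{item:CanonicalQI}) (equivalently Theorem~\ref{thm:CuspExtension} applied to the identity) each is equivariantly quasi-isometric to the corresponding cusped Cayley graph. Thus it suffices to produce a quasi-isometric embedding of cusped Cayley graphs $\hat Y \to \hat X$ and transport it across these quasi-isometries; the roughly $H$--equivariant quasisymmetric embedding on boundaries then follows by Theorem~\ref{thm:QItoQS}.

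First I would recall the content of the Manning--Mart\'{i}nez theorem: for a relatively quasiconvex $(H,\mathbb{Q}) \le (\Gamma,\mathbb{P})$ with induced peripheral structure, the inclusion $H \hookrightarrow \Gamma$ extends to a quasi-isometric embedding of the associated cusped Cayley graphs. The induced peripheral structure $\mathbb{Q}$ consists of (conjugacy representatives of) the infinite subgroups of the form $H \cap \gamma P \gamma^{-1}$, and relative quasiconvexity guarantees both that $H$ is undistorted in $\Gamma$ rel the peripherals and that each peripheral $Q \in \mathbb{Q}$ sits inside a single conjugate of some $P \in \mathbb{P}$ in a controlled way. This is precisely the data needed so that, on the level of horoballs, the inclusion $Q \to P$ is a quasi-isometric embedding of finitely generated groups; Corollary~\ref{cor:HoroballExtensionQI} then upgrades each such inclusion to a quasi-isometric embedding of the corresponding combinatorial horoballs (both of which have constant horospherical dilation by Remark~\ref{rem:CombinatorialCHD}). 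The horoball extensions are compatible along the cores because of the functoriality $\hat\phi\hat\psi \cong \widehat{\phi\psi}$ from Proposition~\ref{prop:HoroballExtension}, so they glue to a global coarse Lipschitz map $\hat Y \to \hat X$ whose restriction to the cores is the Manning--Mart\'{i}nez embedding.

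The remaining step is to verify that this glued map is a quasi-isometric \emph{embedding} and not merely coarse Lipschitz, i.e.\ that distances are not collapsed. On the cores this is the Manning--Mart\'{i}nez lower bound; inside each horoball it follows from the horoball-level lower bound of Corollary~\ref{cor:HoroballExtensionQI}; and for pairs of points in different horoballs, or one in a horoball and one in the core, one uses that in a $\delta$--hyperbolic cusped space a geodesic between such points passes through the cores near the feet of the horoballs, so the global distance estimate reduces to the two cases already controlled. Finally, rough $H$--equivariance of the boundary embedding comes from the equivariance clause of Corollary~\ref{cor:HoroballExtensionQI} applied to the $H$--action on the peripheral horoballs, combined with the (genuine) equivariance of the inclusion on the cores. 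The main obstacle I anticipate is the compatibility check at the interface between the cores and the horoballs: one must ensure that the ambient hyperbolicity forces geodesics in $X$ between images of points in distinct horoballs to track the embedded copy of $Y$ closely enough that no shortcut through $X$ undercuts the intrinsic distance in $Y$. This is exactly where relative quasiconvexity of $H$ is essential, and where the bounded-penetration behavior of geodesics in the cusped Cayley graph must be invoked carefully.
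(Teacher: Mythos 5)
Your first paragraph is exactly the paper's proof: the result is quoted from Manning--Mart\'{i}nez \cite{ManningMartinez10} for the cusped Cayley graphs, and the general case is transported to arbitrary cusped spaces via Remark~\ref{rem:CombinatorialCHD} and Theorem~\ref{thm:CuspExtension} (applied to the identity of each pair), with the boundary statement coming from Theorem~\ref{thm:QItoQS}. Had you stopped there, the argument would be complete and identical to the paper's.

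The second and third paragraphs, however, attempt to re-derive the Manning--Mart\'{i}nez embedding itself from the horoball-extension machinery, and that part has a genuine gap. The upper (coarse Lipschitz) bound does follow from gluing the horoball extensions of the inclusions $Q \to P$ as you describe, but the lower bound does not reduce to ``the two cases already controlled.'' For two points lying in distinct horoballs of $Y$, a geodesic of $X$ joining their images could a priori shortcut through parts of $X$ far from the image of $Y$; ruling this out is precisely the content of relative quasiconvexity as characterized via the cusped Cayley graph, i.e.\ it is the theorem of \cite{ManningMartinez10} you are trying to reprove, not a formal consequence of hyperbolicity plus the horoball-level estimates of Corollary~\ref{cor:HoroballExtensionQI}. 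You identify this as ``the main obstacle,'' but flagging it does not discharge it. A second, smaller inaccuracy: ``on the cores this is the Manning--Mart\'{i}nez lower bound'' suggests that $H$ is undistorted in the word metric of $\Gamma$, which is false in general for relatively quasiconvex subgroups (peripheral subgroups themselves may be exponentially distorted); the relevant lower bound lives in the cusped metric, not in the Cayley graphs of the groups. None of this affects the validity of the corollary, since the citation in your first paragraph already covers it.
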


One may wonder if constant horospherical distortion is necessary, as well as sufficient, to be quasi-isometric to a cusped space. In fact, being quasi-isometric to a combinatorial horoball is strong enough to ensure a given bounded parabolic horoball has constant horospherical distortion.

\begin{prop}
\label{prop:converse}
Let $Y_1$ and $Y_2$ be horoballs each with a bounded parabolic action by the second countable, compactly generated, locally compact group $P$, and suppose there exists a roughly equivariant quasi-isometry $\hat\phi\colon Y_1 \to Y_2$.  If $Y_1$ has constant horospherical distortion, then so does $Y_2$.
\end{prop}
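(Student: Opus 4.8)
The plan is to show that the two orbit–displacement functions
$a(p,p')=d_{Y_1}\bigl(p(y_1),p'(y_1)\bigr)$ and $b(p,p')=d_{Y_2}\bigl(p(y_2),p'(y_2)\bigr)$ differ by precomposition with a genuine bilipschitz homeomorphism of $[0,\infty)$, and then to transport the distortion function $g_1$ of $Y_1$ through this reparametrization. Once this is done, if $g_1$ and $B_1$ witness constant horospherical distortion for $Y_1$ (Definition~\ref{def:Distortion}), a composite will witness it for $Y_2$. The subtle point, and the main obstacle, is that the quasi-isometry alone yields only a two-sided coarse bound $\tfrac1\lambda a-C\le b\le \lambda a+C$, a band of multiplicative width $\lambda^2$; this is far too weak, since for a fixed value of $b$ it would permit $a$, and hence $\exp g_1(a)\approx d_{\mathcal{T}}(p,p')$, to vary by an amount growing with $b$. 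The rigidity that rescues the argument is that $\hat\phi$ preserves not merely orbit points but the entire configuration of rays tending to the parabolic point, together with the depth at which they merge.

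First I would record the boundary behaviour. Since $\hat\phi$ is a $(\lambda,\epsilon)$--quasi-isometry and each $\boundary Y_i$ is a single point $\eta_i$, the induced boundary map sends $\eta_1$ to $\eta_2$. Fixing a basepoint $y_1$, a ray $c_1$ from $y_1$ to $\eta_1$, and setting $y_2=\hat\phi(y_1)$ with $c_2$ a ray from $y_2$ to $\eta_2$, the image $\hat\phi\circ c_1$ is a quasigeodesic ray to $\eta_2$ and so lies within bounded Hausdorff distance of $c_2$ by stability of quasigeodesics. Rough equivariance (whose coarse inverse $\hat\psi$ is also roughly equivariant) then places $\hat\phi\bigl(pc_1(s)\bigr)$ within a uniformly bounded distance of $p\,c_2\bigl(\sigma(s)\bigr)$ for every $p\in P$, where $\sigma(s)$ records the depth along $c_2$ of the point nearest $\hat\phi\bigl(c_1(s)\bigr)$.

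Next I would prove that $\sigma\colon[0,\infty)\to[0,\infty)$ is, up to bounded error, a bilipschitz homeomorphism. The coarse-Lipschitz upper bound is immediate from the quasi-isometry inequality and the fact that a horofunction is coarsely $1$--Lipschitz (cf.\ the claim in the proof of Lemma~\ref{lem:AsynchToSynch}). For the lower bound, $\hat\phi\circ c_1$ is a quasigeodesic fellow-travelling $c_2$, so the depth it attains along $c_2$ increases at a rate bounded below; combined with coarse monotonicity this places $\sigma$ within bounded distance of an honest bilipschitz homeomorphism. I would then transfer this to merging times: writing $\tau_i(p,p')$ for the least depth at which $pc_i$ and $p'c_i$ come within the synchronization constant $D$ of Lemma~\ref{lem:AsynchToSynch}, applying $\hat\phi$ and $\hat\psi$ to the near-coincidence of $pc_i$ and $p'c_i$ at depth $\tau_i$ shows $\tau_2\approx\sigma(\tau_1)$ up to bounded additive error.

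Finally I would invoke Lemma~\ref{lem:approxDistance}, which gives $a\approx 2\tau_1$ and $b\approx 2\tau_2$ up to bounded error in each horoball. Chaining the previous steps yields $b\approx 2\sigma\bigl(\tfrac12 a\bigr)$, a genuine bilipschitz relation between the two orbit displacements. Setting $g_2(r)=g_1\bigl(2\sigma^{-1}(r/2)\bigr)$, which is a bilipschitz homeomorphism of $[0,\infty)$ as a composite of such, and combining the constant horospherical distortion of $Y_1$ with this relation produces a constant $B_2$ with $\tfrac1{B_2}\exp g_2(b)\le d_{\mathcal{T}}(p,p')\le B_2\exp g_2(b)$ for all distinct $p,p'\in P$, which is exactly Definition~\ref{def:Distortion} for $Y_2$. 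One may alternatively run the same transport through the dilation function, taking $f_2=\sigma\circ f_1$ and verifying the two implications of Definition~\ref{def:Dilation} directly before applying Proposition~\ref{prop:DistortionDilation}; the bookkeeping of constants is comparable.
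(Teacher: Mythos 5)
Your argument is correct and is essentially the paper's proof: both construct the bilipschitz reparametrization $\sigma$ (the paper's $f$) from stability of quasigeodesics applied to $\hat\phi\circ c_1$, use rough equivariance to compare the translated rays $pc_i$, and control merging depths and orbit displacements via Lemma~\ref{lem:approxDistance}. The only difference is bookkeeping: the paper first normalizes $Y_1$ to a combinatorial horoball (so its dilation function is the identity) and verifies constant horospherical dilation for $Y_2$ before invoking Proposition~\ref{prop:DistortionDilation}, whereas you transport the distortion function $g_1$ directly --- a route you yourself note is interchangeable with the dilation one.
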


\begin{proof}
Recall that any combinatorial horoball for $P$ admits a bounded parabolic action and has constant horospherical distortion.
Therefore by Corollary~\ref{cor:HoroballExtensionQI}, we may assume without loss of generality that $Y_1$ itself is a combinatorial horoball and thus it satisfies Definition~\ref{def:Dilation} with dilation function equal to the identity.
This assumption---while not essential---simplifies the calculations throughout the rest of the proof.  According to Proposition~\ref{prop:DistortionDilation}, to establish constant horospherical distortion for $Y_2$, it suffices to show that $Y_2$ has constant horospherical dilation.

Suppose $\hat\phi$ is a $K$--roughly equivariant $(\lambda,\epsilon)$--quasi-isometry (as in Definition~\ref{def:roughequivariance}), and let $c_1$ be a geodesic ray in $Y_1$.
Since $\hat\phi \of c_1$ is a quasigeodesic, it lies within a finite Hausdorff distance of a geodesic ray $c_2$ emanating from the same initial point.
In particular, there exists a quasi-isometry $f \colon [0,\infty)\to [0,\infty)$ and a global constant $H$ such that
\[
   d \bigl( \hat\phi c_1 (t), c_2(ft) \bigr) < H.
\]
We may assume without loss of generality that $f$ is a bilipschitz homeomorphism, since each quasi-isometry of $[0,\infty)$ is close to a bilipschitz homeomorphism and the closeness depends only on the given quasi-isometry constants (see, for example, \cite{Sankaran06}).

By our choice of $f$ and $\hat\phi$, it follows that
\begin{align*}
   d \bigl( p c_2(ft), p'c_2(ft) \bigr)
   &\leq
   d \bigl( p \hat\phi c_1(t), p' \hat\phi c_1 (t) \bigr) +2H \\
   & \leq d \bigl( \hat\phi pc_1 (t), \hat\phi p'c_1 (t) \bigr) +2H +2K \\
 &\leq   \lambda\,d \bigl( pc_1(t), p'c_1(t) \bigr) + 2H + 2K + \epsilon.
\end{align*}
Since $Y_1$ satisfies Definition~\ref{def:Dilation}(\ref{item:WordImpliesHoro}) with dilation function equal to the identity, it follows that $Y_2$ satisfies the same property with dilation function $f$.

Applying a similar argument in reverse gives
\[
   d \bigl( pc_1(t),p'c_1(t) \bigr)
   \le \lambda\,d \bigl( p c_2(ft), p' c_2(ft) \bigr) + M,
\]
where $M$ is a constant depending only on $H,K,\lambda$, and $\epsilon$.
To show that $Y_2$ satisfies Definition~\ref{def:Dilation}(\ref{item:HoroImpliesWord}), we assume that
\[
   d \bigl( pc_2 (f (t)), p'c_2 (f ( t)) \bigr) \leq D,
\]
which immediately implies
\[
   d \bigl( pc_1(t),p'c_1(t) \bigr) \leq \lambda D + M.
\]
By Lemma~\ref{lem:approxDistance}, there is a constant $F$ such that the distance $d \bigl( pc_1(t),p'c_1(t) \bigr)$ is approximately equal to $2 (t_0-t)$ with error at most $F$, where $t_0$ is a parameter greater than or equal to $t$ satisfying
\[
   d \bigl( pc_1(t_0),p'c_1(t_0) \bigr) \leq D.
\]
Applying Definition~\ref{def:Dilation}(\ref{item:HoroImpliesWord}) to the space $Y_1$ gives a constant $A'_1$ such that $d_\mct (p,p') \leq A'_1 e^{t_0} = A'_1 e^{t_0-t} e^t$.
Since $t_0 - t$ is bounded in terms of $\lambda,D,M$, and $F$, we conclude that Definition~\ref{def:Dilation}(\ref{item:HoroImpliesWord}) also holds in $Y_2$.
\end{proof}

By Proposition~\ref{prop:converse}, we see that for a given relatively hyperbolic pair, a weak cusped space without constant horospherical distortion cannot be equivariantly quasi-isometric to a cusped space. 
We also get the following corollary as a direct application of Proposition~\ref{prop:converse}.

\begin{cor}
\label{cor:cuspedspacerigidity}
Let $\GaP$ be a relatively hyperbolic pair that acts cusp-uniformly on a cusped space $X_1$ and also on a weak cusped space $X_2$. If there is a roughly-equivariant quasi-isometry from $X_1$ to $X_2$, then $X_2$ is also a cusped space.
\end{cor}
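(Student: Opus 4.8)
The plan is to reduce the corollary directly to Proposition~\ref{prop:converse} by examining how the given roughly-equivariant quasi-isometry $X_1 \to X_2$ behaves on the horoballs of the two weak cusped spaces. The key observation is that being a cusped space is a local condition on horoballs: by Definition~\ref{defn:cuspedspace}, $X_2$ is a cusped space precisely when each $\Gamma$--invariant horoball in $X_2$ has constant horospherical distortion. Since both $X_1$ and $X_2$ are weak cusped spaces for the same pair $\GaP$, their horoballs are organized into finitely many $\Gamma$--orbits, with each orbit corresponding to a peripheral subgroup $P \in \mathbb{P}$; thus it suffices to verify constant horospherical distortion for a single representative horoball in each orbit.

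The main step is to show that the ambient quasi-isometry $\Phi\colon X_1 \to X_2$ restricts (coarsely) to a roughly-equivariant quasi-isometry between a horoball $B_1 \subset X_1$ and the corresponding horoball $B_2 \subset X_2$ stabilized by the same peripheral subgroup $P$. First I would fix $P \in \mathbb{P}$ and let $B_1, B_2$ be the horoballs centered at the parabolic point $\xi$ fixed by $P$ in each space; by Remark~\ref{rem:inducedhoroball}, the action of $P$ on each $B_i$ is bounded parabolic. Because $\Phi$ is roughly equivariant and $\Gamma$--equivariantly respects the boundary homeomorphism $\partial X_1 \to \partial X_2$ (which matches parabolic points with the same stabilizers), the image $\Phi(B_1)$ lies within bounded Hausdorff distance of $B_2$. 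One then checks that the restriction $\Phi|_{B_1}\colon B_1 \to B_2$ is a quasi-isometry onto a quasidense subset and that the rough equivariance with respect to $\Gamma$ specializes to rough equivariance with respect to $P$, since $P \le \Gamma$ acts by the same isometries on both sides.

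Granting this restriction, I would apply Proposition~\ref{prop:converse} with the horoballs $Y_1 = B_1$ and $Y_2 = B_2$: since $X_1$ is a cusped space, $B_1$ has constant horospherical distortion, and the roughly $P$--equivariant quasi-isometry $B_1 \to B_2$ then forces $B_2$ to have constant horospherical distortion as well. Running this argument over one representative horoball in each of the finitely many $\Gamma$--orbits establishes constant horospherical distortion for every $\Gamma$--invariant horoball of $X_2$, which is exactly the condition in Definition~\ref{defn:cuspedspace}, so $X_2$ is a cusped space.

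The main obstacle I anticipate is the careful bookkeeping in the restriction step: one must confirm that a roughly $\Gamma$--equivariant quasi-isometry of the full spaces genuinely carries the horoball $B_1$ to within bounded distance of $B_2$ (rather than smearing it across $X_2$), and that the boundary correspondence sends $\xi$ to the parabolic point stabilized by the \emph{same} subgroup $P$. This relies on the fact that quasi-isometries of hyperbolic spaces extend to the boundary (Theorem~\ref{thm:QItoQS}) and that the induced boundary map is $\Gamma$--equivariant, so it preserves the parabolic points and their stabilizers. Once the horoballs are matched up correctly, the rough equivariance of the restriction and the quasidensity of the image are routine consequences of the bounded parabolic structure, and the remainder is a direct invocation of Proposition~\ref{prop:converse}.
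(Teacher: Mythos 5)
Your proposal is correct and follows exactly the route the paper intends: the paper states Corollary~\ref{cor:cuspedspacerigidity} as a direct application of Proposition~\ref{prop:converse}, and your argument simply fills in the implicit bookkeeping (matching horoballs via the equivariant boundary map, restricting the roughly equivariant quasi-isometry to a roughly $P$--equivariant quasi-isometry of corresponding horoballs, and running over the finitely many $\Gamma$--orbits). The details you flag as potential obstacles are handled correctly and are consistent with how the paper treats horoballs via Remark~\ref{rem:inducedhoroball} and Definition~\ref{def:BoundedParabolic}.
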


When $\GaP$ is relatively hyperbolic and $\Gamma$ is finitely generated, Bowditch has shown that any two weak cusped spaces have equivariantly homeomorphic boundaries (see Definition~\ref{def:BoundaryTop}).

We are now ready for the proof of Theorem~\ref{thm:introconformalgaugeconverse}.

\begin{proof}[Proof of Theorem~\ref{thm:introconformalgaugeconverse}]
Recall that all visual metrics on each $\partial X_i$ are quasisymmetric. By assumption, $X_i$ are both visual spaces. Additionally, we may apply Proposition~\ref{prop:CuspUniformlyPerfect} to observe that both $\partial X_i$ are also uniformly perfect. Thus Theorem~\ref{thm:QStoQI} states that there is a quasi-isometry between $X_1$ and $X_2$. This quasi-isometry is also roughly equivariant by the assumption that the quasisymmetry is equivariant. Therefore we may apply Corollary~\ref{cor:cuspedspacerigidity} to observe that $X_2$ is also a cusped space.
\end{proof}

%%%%%%%%%%%%%%%%%%%%%%%%%%%%%%%%%%%%%%%%%%%%%%%%%%
\section{Recognizing rank one lattices}
\label{sec:SymmSpaces}
%%%%%%%%%%%%%%%%%%%%%%%%%%%%%%%%%%%%%%%%%%%%%%%%%%

The study of relatively hyperbolic groups grew out of generalizing the behavior of nonuniform lattices in negatively curved symmetric spaces; 
\emph{i.e.}, the classical hyperbolic spaces $\Hyp_{\mathbb{K}}^n$ where $\mathbb{K}$ is either the real, complex, quaternion, or octave numbers. In particular, lattices in rank one symmetric spaces are relatively hyperbolic with respect to a natural peripheral structure.

In \cite{CannonCooper92}, Cannon--Cooper show that lattices in real hyperbolic $3$--space (with their natural peripheral structure) can be recognized up to quasi-isometry as precisely those group pairs whose corresponding cusped Cayley graph is quasi-isometric to $\Hyp^3_{\R}$.
(Cannon--Cooper use a slightly different model for the cusped Cayley graph than defined here, but the two models are quasi-isometric by a result of \cite{GrovesManningSisto}.)

In this section we extend that result to all rank one symmetric spaces of noncompact type.

\begin{thm}
\label{thm:LatticeRelHyp}
Let $X$ be a negatively curved symmetric space.  Let $\Gamma$ be a lattice in $\Isom(X)$, and let $\mathbb{P}$ be a set of representatives of the conjugacy classes of maximal parabolic subgroups of $\Gamma$.\footnote{A subgroup $P\le\Gamma$ is \emph{maximal parabolic} if $P$ is the stabilizer of a parabolic point of the convergence action on $\boundary X$, as discussed in Section~\ref{sec:RelHyp}.
We caution the reader that this terminology conflicts with the meaning of ``parabolic'' in Lie theory.}
Then $\GaP$ is relatively hyperbolic, $\Gamma$ and each $P \in \mathbb{P}$ are finitely generated, and there exists a roughly equivariant quasi-isometry between the symmetric space $X$ and the cusped Cayley graph $Y(\Gamma,\mathbb{P})$.
\end{thm}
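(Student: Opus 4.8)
The plan is to show that the lattice action of $\Gamma$ on $X$ is cusp uniform in the sense of Definition~\ref{def:relhyp}, deduce finite generation, and then feed the two model spaces into the cusp-extension machinery of Section~\ref{sec:CuspExtension}. First I would record the basic geometry of $X$. A negatively curved symmetric space is one of the classical hyperbolic spaces $\Hyp_\K^n$, with $\K$ the real, complex, quaternion, or octonion numbers; in particular $X$ is a complete Riemannian manifold with sectional curvature pinched between two negative constants, so $X$ is proper and $\delta$--hyperbolic for some $\delta$. Since $\Isom(X)$ acts transitively, hence cocompactly, on $X$, the space $X$ is visual. The lattice $\Gamma$ acts properly discontinuously and isometrically on $X$, and by classical reduction theory for lattices in rank one symmetric spaces \cite{Borel_Arithmetic,GarlandRaghunathan70,Bowditch95} the finite-covolume hypothesis yields a $\Gamma$--equivariant family of pairwise disjoint open horoballs, centered exactly at the parabolic fixed points of $\Gamma$ in $\boundary X$, whose complement admits a cocompact $\Gamma$--action, with only finitely many cusps. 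Thus $\mathbb{P}$ is a finite family of representatives of the conjugacy classes of maximal parabolic subgroups, the action is cusp uniform, and $X$ is a weak cusped space; in particular $\GaP$ is relatively hyperbolic. (Equivalently, the boundary action on $\boundary X$ is a geometrically finite convergence action, so one may instead appeal to Theorem~\ref{thm:CuspUiffGeomFin}.)

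For finite generation, I would use that $\Gamma$ acts properly and cocompactly on the truncated space; geometrically finite groups in pinched negative curvature are finitely generated \cite{Bowditch95}, so $\Gamma$ is finitely generated. Since $\GaP$ is relatively hyperbolic with $\Gamma$ finitely generated, each peripheral subgroup $P \in \mathbb{P}$ is finitely generated by Osin \cite{Osin06}. (Alternatively, each such $P$ acts properly and cocompactly on a horosphere, a simply connected nilpotent Lie group, and is therefore virtually nilpotent, hence finitely generated.) Taking the union of finite generating sets of $\Gamma$ and of each $P$ produces a finite adapted generating set, so the cusped Cayley graph $Y(\Gamma,\mathbb{P})$ is defined.

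The quasi-isometry is then immediate from the machinery already established. By Proposition~\ref{prop:SymSpaceCHD} together with Proposition~\ref{prop:DistortionDilation}, the horoballs of $X$ have constant horospherical distortion, so $X$ is a cusped space in the sense of Definition~\ref{defn:cuspedspace}; by Remark~\ref{rem:CombinatorialCHD} the cusped Cayley graph $Y(\Gamma,\mathbb{P})$ is also a cusped space for $\GaP$. Applying Theorem~\ref{thm:CuspExtension} to the identity isomorphism $\GaP \to \GaP$ produces a roughly equivariant quasi-isometry $X \to Y(\Gamma,\mathbb{P})$, completing the proof.

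I expect the genuine obstacle to be the first step, namely producing the $\Gamma$--equivariant family of disjoint horoballs with cocompact complement and finitely many cusps. This is standard reduction theory, but in the noncocompact setting it must be invoked carefully, and it is the only point at which the specific geometry of rank one symmetric spaces and the finite-covolume hypothesis are genuinely essential. Everything afterward is formal bookkeeping that routes through the cusp-extension results of the paper.
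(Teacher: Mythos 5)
Your proposal is correct and follows essentially the same route as the paper: cite Garland--Raghunathan for the cusp-uniform action (hence relative hyperbolicity and finite generation), use Proposition~\ref{prop:SymSpaceCHD} to see that $X$ is a cusped space, and invoke Theorem~\ref{thm:CuspExtension} on the identity to extend the equivariant quasi-isometry between the Cayley graph and the truncated space to a roughly equivariant quasi-isometry of cusped spaces. The only cosmetic difference is that the paper phrases the last step via the Milnor--\v{S}varc quasi-isometry $\Gamma \to X - U$ rather than directly via the identity isomorphism of pairs, which amounts to the same application of the cusp-extension theorem.
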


\begin{proof}
In \cite{GarlandRaghunathan70}, Garland--Raghunathan prove that any such lattice $\Gamma$ acts cusp-uniformly on the symmetric space $X$ with a connected truncated space $X \setminus U$. Thus $(\Gamma,\mathbb{P})$ is relatively hyperbolic, $\Gamma$ is finitely generated, and each $P \in \mathbb{P}$ is finitely generated. Furthermore, $X$ is a cusped space for $\GaP$, since $X$ is geodesically complete and all horoballs have constant horospherical dilation by Proposition~\ref{prop:SymSpaceCHD}.

Observe that there exists an equivariant quasi-isometry from the Cayley graph for $\Gamma$ with respect to any finite generating set to the truncated space $X-U$. This quasi-isometry induces a roughly equivariant quasi-isometry from the cusped space $Y(\Gamma,\mathbb{P})$ to $X$ by Theorem~\ref{thm:CuspExtension}.
\end{proof}

\begin{rem}[Finite extensions of lattices]
\label{rem:FiniteExtension}
We note that the conclusions of Theorem~\ref{thm:LatticeRelHyp} also hold more generally when $\Gamma$ is a finite extension of a lattice (in the sense that $\Gamma$ maps onto a lattice with a finite kernel), since a finite extension of a lattice also acts cusp-uniformly on $X$.
\end{rem}

We can use rigidity properties enjoyed by symmetric spaces to prove the following converse to the above statement. While the following statement may feel similar in flavor to the main results of Schwartz in \cite{Schwartz95}, we note to the reader that we make no assumption that the group is a priori quasi-isometric to a lattice, as is the hypothesis in \cite{Schwartz95}. Instead, this is part of our conclusion, which we obtain only from geometric information about a cusped space for $\Gamma$.

\begin{thm}
\label{thm:RecognizingLattices}
Let $\GaP$ be a group pair such that $\Gamma$ has a finite generating set $\mathcal{S}$ adapted to $\mathbb{P}$. If the cusped Cayley graph $Y\GaP$ is quasi-isometric to a rank one symmetric space $X$ then $\Gamma$ has a finite normal subgroup $F$ such that $\Gamma/F$ is isomorphic to a lattice in $\Isom(X)$ and $\mathbb{P}$ represents the conjugacy classes of preimages in $\Gamma$ of the maximal parabolic subgroups of the lattice.
\end{thm}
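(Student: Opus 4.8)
The plan is to move the dynamics of $\Gamma$ onto the boundary sphere $\partial X$, to conjugate the resulting uniformly quasiconformal action to an isometric one by rank one rigidity, and then to recognize its image as a lattice via Bowditch's topological characterization. First I would record the setup. Since $Y$ is quasi-isometric to the Gromov-hyperbolic space $X$, the cusped Cayley graph $Y$ is itself $\delta$--hyperbolic, so by Theorem~\ref{thm:grovesmanning} the pair $\GaP$ is relatively hyperbolic and $Y$ is a cusped space (Remark~\ref{rem:CombinatorialCHD}). The left action of $\Gamma$ on $Y$ is proper, isometric, and cusp uniform, so by Theorem~\ref{thm:CuspUiffGeomFin} the induced action on $\partial Y$ is a geometrically finite convergence group action whose maximal parabolic subgroups are represented by $\mathbb{P}$; because $\Gamma$ acts cocompactly on the truncated space $Y\setminus U$, its limit set is all of $\partial Y$.

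Next I would transport this to $\partial X$. Let $q\colon Y \to X$ be the given quasi-isometry; by Theorem~\ref{thm:QItoQS} it induces a quasisymmetric homeomorphism $\partial q\colon \partial Y \to \partial X$, and conjugating the $\Gamma$--action by $\partial q$ yields an action of $\Gamma$ on $\partial X$ by homeomorphisms. Since each $\gamma$ acts on $Y$ by an isometry, the maps $q \of \gamma \of q^{-1}$ are quasi-isometries of $X$ with uniform constants, so the induced boundary homeomorphisms form a uniformly quasiconformal family, again by Theorem~\ref{thm:QItoQS}. As this action is topologically conjugate to the action on $\partial Y$, and being a geometrically finite convergence group with full limit set is a topological and dynamical property (Theorem~\ref{thm:DiscreteConvergence}), it is a geometrically finite convergence group action on $\partial X$ with limit set all of $\partial X$.

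The heart of the argument, and the step I expect to be the main obstacle, is to conjugate this uniformly quasiconformal action to an action by boundary maps of isometries of $X$. For $\partial X \cong S^1$ (the case $X=\Hyp^2_{\R}$) this rests on the convergence group theorem, while for the higher-dimensional boundary spheres one invokes Sullivan--Tukia rigidity for uniformly quasiconformal groups \cite{Tukia86}, applied with care in the noncocompact setting \cite{Tukia94,KleinerLeeb01}; here the fact that the action is geometrically finite with full limit set forces the conical limit set to have full measure, which is what supplies the measurable invariant conformal structure needed to produce a quasiconformal conjugacy $\psi\colon \partial X \to \partial X$ onto a conformal group. Since an isometry of $X$ is determined by its boundary map, this conjugacy produces a homomorphism $\rho\colon \Gamma \to \Isom(X)$ realizing the conjugated boundary action.

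Finally I would extract the conclusion. The kernel $F=\ker\rho$ consists of elements acting trivially on $\partial X$, hence fixing at least three points; in a discrete convergence group the stabilizer of three distinct points is finite, so $F$ is a finite normal subgroup. The image $\Gamma/F$ acts on $\partial X$ by isometries as a geometrically finite convergence group with full limit set, so by the characterization promoted by Bowditch \cite{BeardonMaskit74,GehringMartin87,Bowditch95} it is a lattice in $\Isom(X)$. Under the conjugacy, parabolic points and their maximal parabolic stabilizers correspond, so the maximal parabolic subgroups of $\Gamma$ are precisely the full $\rho$--preimages of the maximal parabolic subgroups of the lattice $\Gamma/F$; since these preimages contain $F$ and $\mathbb{P}$ represents the conjugacy classes of maximal parabolics of the $\partial Y$--action, $\mathbb{P}$ represents the conjugacy classes of preimages of the maximal parabolic subgroups of the lattice, as required.
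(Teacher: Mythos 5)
Your proposal is correct and follows essentially the same route as the paper: hyperbolicity of $Y$ plus Groves--Manning gives relative hyperbolicity, the quasi-isometry transports the action to a uniformly quasisymmetric geometrically finite convergence action on $\partial X$ whose conical limit set has full measure, Sullivan--Tukia--Chow--Pansu rigidity (the paper's Proposition~\ref{prop:Quasiconjugate}) conjugates it to an isometric action with finite kernel, and Bowditch's topological characterization of geometrical finiteness identifies the image as a lattice. The only cosmetic difference is that for $X=\Hyp^2_{\R}$ the paper cites Hinkkanen and Markovic for quasisymmetric groups of the circle rather than the convergence group theorem, but this does not affect the argument.
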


The uniform case was previously known, due to the combined efforts of many authors.
A key step in the uniform case is the following proposition, which has been used in the study of cobounded quasi-actions \cite{Tukia94,KleinerLeeb01}, but which also applies to certain actions that are not cobounded, as explained in the following proposition.
Although our main results on relative hyperbolicity use only the discrete case of this proposition, we include the nondiscrete case, which follows by essentially the same argument.

\begin{prop}
\label{prop:Quasiconjugate}
Let $X$ be a negatively curved symmetric space.
Suppose $G$ has a quasi-action on $X$ whose set of conical limit points in $\boundary X$ has positive measure.\footnote{Recall that in a smooth manifold, a set $A$ has \emph{measure zero} if for each chart $\phi\colon U \to \R^n$ the set $\phi(A\cap U)$ has Lebesgue measure zero in $\R^n$.  Thus any countable subset of a smooth manifold has measure zero.
The boundary sphere of a rank one symmetric space $X$ has a natural smooth structure as a $G$--homogeneous space $G/G_\xi$, where $G$ is the identity component of $\Isom(X)$ and $G_\xi$ is the stabilizer of a boundary point $\xi$.
See \cite{Lee_Smooth} for background on sets of measure zero and homogeneous spaces.}
Then the given quasi-action is quasi-isometrically conjugate to an isometric action on $X$.
\end{prop}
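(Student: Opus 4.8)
The plan is to descend to the boundary sphere $\boundary X$, apply a measure-theoretic rigidity theorem of Tukia type there, and then lift the resulting conjugacy back to $X$. First I would check that the quasi-action $\rho$ induces an honest action of $G$ on $\boundary X$ by uniformly quasisymmetric homeomorphisms. If $\rho$ is a $(\lambda,\epsilon)$--quasi-action, then by Theorem~\ref{thm:QItoQS} each $\rho(\gamma)$ induces a boundary homeomorphism $\boundary\rho(\gamma)$ that is $\eta$--quasisymmetric with a single modulus $\eta$ depending only on $\lambda,\epsilon$, and $\delta$. Since equivalent quasi-isometries induce the same boundary map and $\rho(\gamma_1)\of\rho(\gamma_2)$ lies within bounded distance of $\rho(\gamma_1\gamma_2)$, functoriality of the boundary map gives $\boundary\rho(\gamma_1)\of\boundary\rho(\gamma_2) = \boundary\rho(\gamma_1\gamma_2)$; thus $\gamma \mapsto \boundary\rho(\gamma)$ is a homomorphism and $G$ acts on $\boundary X$ as a uniformly quasiconformal convergence group, where $\boundary X$ carries its Carnot--Carath\'eodory conformal structure. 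I would then invoke the equivalence of definitions of conical limit point established in Appendix~\ref{sec:Conical}, applied in the nondiscrete quasi-action setting, to identify the conical limit set of the quasi-action with the conical limit set of this boundary action. Consequently the boundary action has conical limit set of positive measure for the smooth measure class on $\boundary X$.

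The core of the argument is the rigidity step. For a uniformly quasiconformal group acting on the boundary of a rank one symmetric space whose conical limit set has positive measure, the action is quasisymmetrically conjugate to a conformal action, that is, to the boundary action of a subgroup of $\Isom(X)$; this is Tukia's theorem \cite{Tukia86} in the real hyperbolic case, and the general rank one case follows from the rigidity results surveyed in \cite{KleinerLeeb01}, applied with the care in the noncocompact setting discussed in \cite{Tukia94}. Let $\psi\colon \boundary X \to \boundary X$ be the resulting quasisymmetry and let $\sigma\colon G \to \Isom(X)$ be the isometric action satisfying $\boundary\sigma(\gamma)\of\psi = \psi\of\boundary\rho(\gamma)$ for every $\gamma \in G$. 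I expect this step to be the main obstacle, both because it imports substantial measurable-conformal machinery and because the positive-measure hypothesis is precisely what is needed to circumvent the failure of quasiconformal rigidity that would otherwise occur in the real and complex hyperbolic cases.

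Finally I would promote $\psi$ to a quasi-isometric conjugacy of the quasi-actions. Since $\Isom(X)$ acts cocompactly on $X$, the space is visual, and $\boundary X$, being a connected sphere, is uniformly perfect; hence Theorem~\ref{thm:QStoQI} extends $\psi$ to a quasi-isometry $F\colon X \to X$ with $\boundary F = \psi$. For each $\gamma$ the quasi-isometries $F\of\rho(\gamma)$ and $\sigma(\gamma)\of F$ both induce the boundary map $\psi\of\boundary\rho(\gamma) = \boundary\sigma(\gamma)\of\psi$, and both have quasi-isometry constants bounded independently of $\gamma$; the uniqueness clause of Theorem~\ref{thm:QStoQI} therefore bounds their sup-distance by a single constant $\nu$. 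This says exactly that $F$ quasi-isometrically conjugates $\rho$ to the isometric action $\sigma$ in the sense of Definition~\ref{def:roughequivariance}, which is the desired conclusion.
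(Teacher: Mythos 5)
Your proposal is correct and follows essentially the same route as the paper: reduce to a statement about uniformly quasisymmetric boundary actions via Theorems~\ref{thm:QItoQS} and~\ref{thm:QStoQI}, invoke the measurable quasiconformal rigidity theorems (Tukia, Chow, Pansu, et al.) with the conical-limit equivalence from Appendix~\ref{sec:Conical}, and lift the conjugating quasisymmetry back to a roughly equivariant quasi-isometry of $X$. The only difference is one of detail: you spell out the extension and uniqueness argument in the last step more explicitly than the paper does, and the paper cites the case-by-case rigidity results individually rather than deferring to the survey in \cite{KleinerLeeb01}.
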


We note that in Section~\ref{sec:ConvergenceGroup} the notion of conical limit was defined only in the setting of proper, isometric actions.  This definition has a natural generalization to arbitrary quasi-actions, which is discussed in Appendix~\ref{sec:Conical}.

\begin{proof}
Note that $\boundary X$ is uniformly perfect, since it is connected.
By Theorems \ref{thm:QItoQS} and~\ref{thm:QStoQI}, the proposition is equivalent to the statement that any uniformly quasisymmetric group of homeomorphisms of $\boundary X$ can be conjugated by a quasisymmetric map to a group of homeomorphisms induced by an isometric action on $X$, provided that the set of conical limit points of the given action has positive measure.
If $X$ is a negatively curved symmetric space not equal to the real hyperbolic plane, the term ``quasiconformal'' may be substituted for ``quasisymmetric,'' since the two notions are equivalent in that setting
(see Heinonen--Koskela  \cite{HeinonenKoskela95} for the definition of quasiconformal and a proof of this equivalence).

The relevant statements about quasiconformal or quasisymmetric maps are proved for nondiscrete actions on $\Hyp_{\mathbb{R}}^2$ by Hinkkanen \cite{Hinkkanen90}, for discrete actions on $\Hyp_{\mathbb{R}}^2$ by Markovic \cite{Markovic06_Quasisymmetric}, for $\Hyp_{\mathbb{R}}^3$ by Sullivan and Tukia \cite{Sullivan81,Tukia80},  and for quaternionic and octave hyperbolic spaces by Pansu \cite{Pansu89}.  These results hold without any hypothesis on the set of conical limit points.

Analogous statements are shown for higher dimensional real hyperbolic spaces by Tukia \cite{Tukia86} and for complex hyperbolic spaces by Richard Chow \cite{Chow}, using the criterion in Corollary~\ref{cor:TukiaConicalLimit} as the definition of the term ``conical limit.''
See Appendix~\ref{sec:Conical} for the equivalence of this notion with the one given in Definition~\ref{def:ConicalLimit}.
\end{proof}

\begin{prop}
\label{prop:EquivariantQuasi}
Let $\GaP$ be relatively hyperbolic with a cusped space $Y$ that is quasi-isometric to a rank-one symmetric space $X$. Then the isometric action of $\Gamma$ on $Y$ is quasi-isometrically conjugate to an isometric action of $\Gamma$ on $X$.
\end{prop}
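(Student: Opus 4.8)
The plan is to transport the isometric action of $\Gamma$ on $Y$ across the quasi-isometry to obtain a quasi-action on $X$, to verify that its conical limit set has full (hence positive) measure, and then to invoke Proposition~\ref{prop:Quasiconjugate}. Let $\psi \colon Y \to X$ be the given quasi-isometry and let $\bar\psi \colon X \to Y$ be a coarse inverse. Writing $\alpha(\gamma) \colon Y \to Y$ for the isometry given by the action of $\gamma \in \Gamma$, define a quasi-action $\rho$ of $\Gamma$ on $X$ by $\rho(\gamma) = \psi \of \alpha(\gamma) \of \bar\psi$. Since each $\alpha(\gamma)$ is an isometry and $\psi,\bar\psi$ have fixed quasi-isometry constants, every $\rho(\gamma)$ is a quasi-isometry with uniform constants; the relation $\bar\psi \of \psi \cong \id$ shows that $\rho(\gamma_1)\of\rho(\gamma_2)$ and $\rho(\gamma_1\gamma_2)$ agree up to a uniform bound, so $\rho$ is a quasi-action in the sense of Definition~\ref{def:Quasiaction}. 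Moreover $\rho(\gamma)\of\psi = \psi\of\alpha(\gamma)\of\bar\psi\of\psi \cong \psi\of\alpha(\gamma)$, so $\psi$ is roughly equivariant and quasi-isometrically conjugates $\alpha$ to $\rho$ in the sense of Definition~\ref{def:roughequivariance}.

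The key step is to establish that the conical limit set of $\rho$ in $\boundary X$ has positive measure. The cusp-uniform action of $\GaP$ on $Y$ is geometrically finite by Theorem~\ref{thm:CuspUiffGeomFin}, so every point of $\boundary Y$ is either a conical limit point or a bounded parabolic point of $\alpha$. The bounded parabolic points comprise the $\Gamma$--orbits of the finitely many fixed points of the subgroups in $\mathbb{P}$; since $\Gamma$ is countable, this set is countable, and therefore the conical limit points of $\alpha$ form a co-countable subset of $\boundary Y$. By Theorem~\ref{thm:QItoQS} the quasi-isometry $\psi$ induces a boundary homeomorphism $\boundary\psi \colon \boundary Y \to \boundary X$, and by functoriality of the induced boundary map together with $\boundary\bar\psi = (\boundary\psi)^{-1}$, the identity $\rho(\gamma) = \psi\of\alpha(\gamma)\of\bar\psi$ shows that $\boundary\psi$ conjugates the boundary convergence action of $\alpha$ to that of $\rho$. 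Since conical limit points are preserved by topological conjugacy of convergence actions, and since the conical limit points of the quasi-action $\rho$ coincide with those of its induced boundary convergence action by Appendix~\ref{sec:Conical}, the conical limit set of $\rho$ is the $\boundary\psi$--image of a co-countable subset of $\boundary Y$, hence co-countable in $\boundary X$. As $\boundary X$ is a sphere of positive dimension, a countable subset has measure zero (see the footnote to Proposition~\ref{prop:Quasiconjugate}), so the conical limit set of $\rho$ has full measure.

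Now Proposition~\ref{prop:Quasiconjugate} applies to the quasi-action $\rho$ and produces an isometric action $\beta$ of $\Gamma$ on $X$ together with a quasi-isometry $\Phi \colon X \to X$ that quasi-isometrically conjugates $\rho$ to $\beta$. Composing conjugacies, $\Phi \of \psi \colon Y \to X$ is roughly equivariant and quasi-isometrically conjugates $\alpha$ to $\beta$, since $(\Phi\of\psi)\of\alpha(\gamma) \cong \Phi\of\rho(\gamma)\of\psi \cong \beta(\gamma)\of(\Phi\of\psi)$ with a uniform bound. Thus the isometric action of $\Gamma$ on $Y$ is quasi-isometrically conjugate to the isometric action $\beta$ on $X$, as required. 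I expect the main obstacle to be the measure statement: one must carefully identify the conical limit points of the quasi-action $\rho$ on $X$ with those of the isometric action on $Y$ under the boundary homeomorphism, which is precisely where the equivalence of the various notions of conical limit point from Appendix~\ref{sec:Conical} is needed, together with the observation that the bounded parabolic points form only a countable, and hence null, set.
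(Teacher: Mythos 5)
Your proposal is correct and follows essentially the same route as the paper's proof: conjugate the isometric action on $Y$ across the quasi-isometry to a quasi-action on $X$, observe that geometric finiteness of the cusp-uniform action forces the conical limit set to be co-countable (hence of full measure, since the bounded parabolic points form a countable, null set in the boundary sphere), and then invoke Proposition~\ref{prop:Quasiconjugate}. The paper's version is terser but relies on exactly the same ingredients, including the identification of conical limit points of the quasi-action with those of the induced boundary convergence action from Appendix~\ref{sec:Conical}.
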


\begin{proof}
The isometric action of $\Gamma$ on $Y$ induces an action of $\Gamma$ by homeomorphisms on $\boundary Y$.
The quasi-isometry $f$ between $Y$ and $X$ conjugates the isometric action of $\Gamma$ on $Y$ to a quasi-action $\rho$ on $X$, which induces an action $\boundary \rho$ of $\Gamma$ by homeomorphisms on the boundary sphere $\boundary X$ by Theorem~\ref{thm:QItoQS}

The induced boundary homeomorphism $\boundary f$ topologically conjugates the action on $\boundary Y$ to the action on $\boundary X$.
Since the action of $\Gamma$ on $Y$ is cusp uniform, the induced action on $\boundary X$ is a geometrically finite convergence group action by Theorem~\ref{thm:ConvergenceGroup} and Theorem~\ref{thm:CuspUiffGeomFin}.
In particular, 
every point of $\boundary X$ is a conical limit point except for countably many parabolic points.
Thus we may apply Proposition~\ref{prop:Quasiconjugate}.
\end{proof}

\begin{proof}[Proof of Theorem~\ref{thm:RecognizingLattices}]
Assume that a cusped Cayley graph $Y$ for $(\Gamma,\mathbb{P})$ is quasi-isometric to a negatively curved symmetric space $X$. Since $X$ is $\delta$--hyperbolic, the space $Y$ is also Gromov hyperbolic.
Therefore, by Theorem~\ref{thm:grovesmanning} the pair $(\Gamma,\mathbb{P})$ is relatively hyperbolic.

By Proposition~\ref{prop:EquivariantQuasi}, the action of $\Gamma$ on $Y$ is quasi-isometrically conjugate to an isometric action on $X$.
In particular, the induced actions of $\Gamma$ on $\boundary Y$ and $\boundary X$ are topologically conjugate. Theorem~\ref{thm:CuspUiffGeomFin} states that the given action of $\Gamma$ on $\boundary Y$ is a geometrically finite convergence group action, so the action on $\boundary X$ shares these topological properties.

Because the action on $\boundary X$ is a discrete convergence group, it follows that $\Gamma$ acts properly on $X$ by Theorem~\ref{thm:CuspUiffGeomFin}.  Let $F \triangleleft \Gamma$ be the finite kernel of this action, so that $\bar\Gamma$ is isomorphic to a discrete subgroup of $\Isom(X)$.
Because the action on $\boundary X$ is geometrically finite, the quotient $\bar\Gamma \backslash X$ must have finite volume (see the implication (F2) $\Longrightarrow$ (F5) in \cite{Bowditch95}).
Thus $\bar\Gamma$ is isomorphic to a lattice, as desired.

Because the action of $\Gamma$ on $\boundary X$ is topologically conjugate to the action on $\boundary Y$, the set of group elements which stabilize exactly one boundary point is preserved. In particular, if $\phi$ is the $\Gamma$--equivariant homeomorphism $\boundary Y \rightarrow \boundary X$, then $\Stab_\Gamma (\xi) = \Stab_\Gamma\bigl(\phi(\xi)\bigr)$ for all bounded parabolic points $\xi \in X$. Then in the action by $\Gamma / F$ on $X$, the maximal parabolic subgroups will exactly be the images of these $\Stab_\Gamma\bigl(\phi(\xi)\bigr) = \mathbb{P}$ under the quotient map.
\end{proof}

\appendix
\section{Conical limit points: equivalence of definitions}
\label{sec:Conical}
%%%%%%%%%%%%%%%%%%%%%%%%%%%%%%%%%%%%%%%%%%%

This appendix discusses several definitions of conical limit point that are used throughout the literature, including interpretations in the settings of convergence groups, quasi-actions on $\delta$--hyperbolic spaces, and quasisymmetric groups acting on the boundary of a $\CAT(-1)$ space.
The definitions discussed here are widely known to be equivalent.  Yet we did not find a short, direct proof of their equivalence in the literature.

The main goal of this appendix is to prove Corollary~\ref{cor:TukiaConicalLimit}, which states that the definition of conical limit used by Bowditch in his work on relatively hyperbolic groups \cite{BowditchRelHyp} is equivalent to Tukia's criterion for conical limits used in the study of quasiconformal actions on the sphere at infinity of a negatively curved symmetric space \cite{Tukia86} and \cite{Chow}.
The equivalence established in Corollary~\ref{cor:TukiaConicalLimit} is used in Section~\ref{sec:SymmSpaces}.

We begin by recalling the definition of conical limit.

\begin{defn}[Conical limit]
\label{def:ConicalLimit:App}
Suppose $G$ has a convergence group action on $M$.  A point $\zeta \in M$ is a \emph{conical limit point} if there exist a sequence $(g_i)$ in $G$ and a pair of points $\xi_0\ne\xi_1 \in M$ such that
\[
   g_i \big| \bigl( M - \{\zeta\}\bigr) \to \xi_0
   \qquad \text{and} \qquad
   g_i(\zeta) \to \xi_1.
\]
\end{defn}

%The ``conical'' terminology is a reference to an alternative criterion in hyperbolic geometry due to Hedlund \cite{Hedlund36}, involving orbit points that approach infinity from within a tubular neighborhood of a geodesic. (In complex analysis, such a region is known as a \emph{Stolz region}.)  Recall that in the upper halfspace model of $\Hyp_\R^n$, a hyperbolic tubular neighborhood of a geodesic represented by a vertical line has the shape of a Euclidean cone.

\begin{prop}
\label{prop:ConicalLimit}
Suppose $X$ is a proper $\delta$--hyperbolic space such that $\boundary X$ contains at least three points, and suppose $G$ quasi-acts on $X$.
A point $\zeta \in \boundary X$ is a conical limit point if and only if the following condition holds:

There exists a sequence $(g_i)$ in $G$ such that for some \textup{(}each\textup{)} $x \in X$ and some \textup{(}each\textup{)} geodesic ray $c\colon [0,\infty) \to X$ tending to $\zeta$, we have $g_i(x) \to \zeta$ and the distances $d \bigl( g_i(x),c \bigr)$ are uniformly bounded.
\end{prop}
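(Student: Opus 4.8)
The plan is to translate each of the two conditions into a statement about the induced convergence action on $\boundary X$ by means of Proposition~\ref{prop:ConvergenceGroup}, and then to connect the dynamical picture to the geometric one through the standard dictionary between Gromov products and distance to (quasi-)geodesics. Throughout I fix the basepoint $o=c(0)$ at the initial point of the ray $c$, and I record at the outset that the parenthetical ``some (each)'' quantifiers cost nothing: because a quasi-action is by $(\lambda,\epsilon)$--quasi-isometries with \emph{uniform} constants, replacing $x$ by another point $x'$ changes each $d\bigl(g_i(x),c\bigr)$ by at most $\lambda\,d(x,x')+\epsilon$, and because any two geodesic rays tending to $\zeta$ stay within bounded Hausdorff distance (Bridson--Haefliger \cite[Lem.~III.H.3.3]{BH99}), replacing $c$ by another ray changes $d\bigl(g_i(x),c\bigr)$ by a bounded amount. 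Thus it suffices to prove the equivalence for the single choice $x=o$ and the single ray $c$.

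The geometric input I will use repeatedly is the following. For $p,q\in X\cup\boundary X$ the Gromov product $(p|q)_o$ agrees up to an additive $O(\delta)$ error with the distance from $o$ to a geodesic $[p,q]$; combined with stability of quasi-geodesics, a $(\lambda,\epsilon)$--quasi-geodesic joining $p$ to $q$ passes within a uniformly bounded distance of $o$ if and only if $(p|q)_o$ is bounded. I will also use that two sequences in $X\cup\boundary X$ converging to boundary points have Gromov product tending to $\infty$ exactly when the two limits coincide, and that, by the correspondence underlying Proposition~\ref{prop:ConvergenceGroup}, the attracting point of a collapsing subsequence $f_i\big|(M-\{p\})\to q$ is $q=\lim f_i(o)$; in particular the orbit of $o$ converges to the attracting point. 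The elementary fact that a quasi-action satisfies $d\bigl(\rho(g^{-1})\of\rho(g),\id\bigr)\le K$ for a uniform $K$ (Definition~\ref{def:Quasiaction}) lets me pass freely between a sequence and its inverse.

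To prove that the geometric condition implies that $\zeta$ is a conical limit point, start with $(g_i)$ satisfying $g_i(o)\to\zeta$ and $d\bigl(g_i(o),c(t_i)\bigr)\le R$ with $t_i\to\infty$. Since $\{g_i(o)\}$ is unbounded, Proposition~\ref{prop:ConvergenceGroup} yields a collapsing subsequence with $g_i\big|(M-\{a\})\to b$ and $g_i^{-1}\big|(M-\{b\})\to a$; as $b=\lim g_i(o)=\zeta$, the inverse sequence $h_i=g_i^{-1}$ already satisfies $h_i\big|\bigl(M-\{\zeta\}\bigr)\to a=:\xi_0$, which is the first requirement of Definition~\ref{def:ConicalLimit:App}. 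For the second requirement I apply $g_i^{-1}$ to the ray $c$: since $d\bigl(g_i(o),c(t_i)\bigr)\le R$, the point $g_i^{-1}\bigl(c(t_i)\bigr)$ lies within a uniform distance of $g_i^{-1}g_i(o)$ and hence of $o$, so the quasi-geodesic $g_i^{-1}(c)$, which runs from $h_i(o)$ to $h_i(\zeta)$, passes within a bounded distance of $o$. By the dictionary above this forces $\bigl(h_i(o)\big|h_i(\zeta)\bigr)_o$ to be uniformly bounded. Because $h_i(o)\to a=\xi_0$, any subsequential limit $\xi_1$ of $h_i(\zeta)$ must differ from $\xi_0$: otherwise both sequences would converge to $\xi_0$ and their Gromov product would tend to $\infty$. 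Passing to such a subsequence exhibits $\zeta$ as a conical limit point.

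For the converse, suppose $(k_i)$ and $\xi_0\ne\xi_1$ witness conicality, so $k_i\big|(M-\{\zeta\})\to\xi_0$ and $k_i(\zeta)\to\xi_1$, and set $g_i=k_i^{-1}$. Here $k_i(o)\to\xi_0$ (orbit to attractor) and $k_i(\zeta)\to\xi_1$, and since $\xi_0\ne\xi_1$ the Gromov product $\bigl(k_i(o)\big|k_i(\zeta)\bigr)_o$ stays bounded; hence the quasi-geodesic $k_i(c)$, joining $k_i(o)$ to $k_i(\zeta)$, meets a ball $B(o,B')$ of uniformly bounded radius in some point $k_i\bigl(c(s_i)\bigr)$. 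Applying $g_i=k_i^{-1}$ and using the uniform quasi-isometry constants, $c(s_i)$ lies within a uniform distance $R$ of $g_i(o)$, so $d\bigl(g_i(o),c\bigr)\le R$. Finally $\{g_i(o)\}$ is unbounded (again by Proposition~\ref{prop:ConvergenceGroup}, since $(g_i)$ has a collapsing subsequence), so $s_i=d\bigl(o,c(s_i)\bigr)\to\infty$ and therefore $g_i(o)\to\zeta$, which is exactly the geometric condition. The main obstacle in both directions is this single translation: the \emph{conical} nature of the approach to $\zeta$ corresponds precisely to the image of $c$ passing near the basepoint, which by the Gromov-product dictionary is equivalent to the endpoints $g_i^{\pm1}(o)$ and $g_i^{\pm1}(\zeta)$ having \emph{distinct} boundary limits $\xi_0\ne\xi_1$; isolating this equivalence, and in particular ruling out that $g_i^{-1}(\zeta)$ is absorbed into the attractor $\xi_0$, is the crux of the argument.
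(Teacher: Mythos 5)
Your proof is correct. It follows the same broad strategy as the paper's --- translate both conditions through Proposition~\ref{prop:ConvergenceGroup} and the stability of quasi-geodesics --- but the tactics differ in two ways. First, the paper never works with the ray $c$ directly: it picks two auxiliary points $\eta\ne\eta'\in\boundary X-\{\zeta\}$ and the bi-infinite lines $(\eta,\zeta)$ and $(\eta',\zeta)$, showing that boundedness of $d\bigl(g_i^{-1}(x),\ell\bigr)$ forces the endpoints of $g_i(\ell)$ to stay visually separated (forward direction) and that the limit points of $g_i^{-1}(x)$ lie in $\{\zeta,\eta\}\cap\{\zeta,\eta'\}=\{\zeta\}$ (converse); you instead use the ray $c$ and the basepoint $o$ together with the Gromov-product dictionary, which lets you avoid the second auxiliary point entirely because a ray has only one ideal endpoint. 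Second, you lean on the fact that the orbit of a collapsing sequence converges to its attracting point --- this does genuine work in both of your directions (identifying $b=\zeta$, and getting $h_i(o)\to\xi_0$ and $k_i(o)\to\xi_0$) and is not stated in Proposition~\ref{prop:ConvergenceGroup} as quoted, though it is part of Tukia's Theorem 3E (the convergence is locally uniform on $(X\cup\boundary X)\setminus\{a\}$, which includes interior points), so the citation is legitimate; the paper's route avoids this lemma at the cost of the extra auxiliary line. One small point of hygiene: in the converse you infer $s_i\to\infty$ from $\{g_i(o)\}$ being an unbounded \emph{set}; you should note that $(g_i)=(k_i^{-1})$ is itself collapsing, so every subsequence has unbounded orbit and hence $d\bigl(o,g_i(o)\bigr)\to\infty$ along the whole sequence --- or simply apply orbit-to-attractor once more to conclude $g_i(o)\to\zeta$ directly.
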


The proof below derives from ideas of Beardon--Maskit and Bowditch \cite{BeardonMaskit74,Bowditch95,Bowditch99ConvergenceGroups}. See also \cite{Tukia98}.

\begin{proof}
First notice that if the property stated in the proposition holds for one choice of point $x \in X$ and ray $c$ then it clearly holds for any other choice.

Let $(g_i)$ be a sequence in $G$ such that for some $x \in X$ and for some ray $c$ asymptotic to $\zeta$, we have $g^{-1}_i(x) \to \zeta$  and the distances $d \bigl( g^{-1}_i(x),c\bigr)$ are uniformly bounded. 
Since $\boundary X$ has at least three points, there exist $\eta\ne\eta' \in \boundary X - \{\zeta\}$.
If $\ell$ is a geodesic line from $\eta$ to $\zeta$ then the distances $d \bigl( g^{-1}_i(x),\ell\bigr)$ also remain bounded. Thus the distances $d \bigl( x, g_i(\ell) \bigr)$ are bounded above, and the distance between the endpoints of each of the bi-infinite quasigeodesics $g_i(\ell)$ is bounded away from zero with respect to any visual metric.
Therefore the sequences $\bigl( g_i(\eta) \bigr)$ and $\bigl( g_i(\zeta) \bigr)$ do not have subsequences that converge to the same point.
Similarly $\bigl( g_i(\eta') \bigr)$ and $\bigl( g_i(\zeta) \bigr)$ do not have subsequences that converge to the same point.
By Theorem~\ref{thm:ConvergenceGroup} and compactness, we may pass to a collapsing subsequence such that
\[
   g_i \big| \bigl(\boundary X \setminus \{\zeta'\}\bigr) \to \xi_0
   \qquad \text{and} \qquad 
   g_i(\zeta') \to \xi_1
\]
for some points $\zeta',\xi_0,\xi_1 \in \boundary X$.
It follows immediately that $\zeta'$ must equal $\zeta$ and that $\xi_0 \ne \xi_1$.  Thus $\zeta$ is a conical limit point.

Conversely suppose $\zeta$ is a conical limit point.  Then $g_i \big| \bigl(\boundary X \setminus\{\zeta\}\bigr) \to \xi_0$ and $g_i(\zeta) \to \xi_1$ for some $\xi_0\ne \xi_1$.
Since $\boundary X$ has at least three points, there exist two distinct points $\eta$ and $\eta'$ in $\boundary X \setminus\{\zeta\}$. 
By hypothesis, the sequences $\bigl( g_i(\eta)\bigr)$ and $\bigl( g_i(\zeta) \bigr)$ have distinct limits.
Reversing the previous line of reasoning, we see that for each $x \in X$, if $\ell$ is a line from $\eta$ to $\zeta$ the distances $d\bigl( x, g_i(\ell) \bigr)$ and $d \bigl( g_i^{-1}(x),\ell \bigr)$ remain bounded as $i\to \infty$.
By Theorem~\ref{thm:ConvergenceGroup}, we have $g_i^{-1}(x) \to \infty$ since $(g_i^{-1})$ is a collapsing sequence.
Hence the set of limit points of $\bigl(g_i^{-1}(x) \bigr)$ is a nonempty subset of $\{\zeta,\eta\}$.
Similarly the set of limit points is also a nonempty subset of $\{\zeta,\eta'\}$.  Since $\eta \ne \eta'$, the only possibility is that $g_i^{-1}(x) \to \zeta$.
\end{proof}

In general, a convergence group action of $G$ on the boundary of a proper $\delta$--hyperbolic space $X$ might not extend to an isometric group action, or even a quasi-action, on $X$.  In that case, a natural substitute for $X$ is given by the \emph{triple space} of $\boundary X$, defined by
\[
   \Theta=\Theta(\boundary X) = \bigset{(\xi_1,\xi_2,\xi_3) \in (\boundary X)^3}{\text{$\xi_1,\xi_2,\xi_3$ are pairwise distinct}}.
\]
The group $G$ naturally acts on the triple space via the rule
\[
   g(\xi_1,\xi_2,\xi_3)=\bigl( g(\xi_1),g(\xi_2),g(\xi_3) \bigr).
\]

A correspondence between compact subsets of $\Theta(\boundary X)$ and compact subsets of $X$ may be described precisely in the special case that $X$ is a proper $\CAT(-1)$ space, using a construction due to Ahlfors--Cheeger.
We consider the function $\pi\colon \Theta(\boundary X) \to X$ such that $\pi(\xi_1,\xi_2,\xi_3)$ is the orthogonal projection of $\xi_3$ onto the unique geodesic of $X$ joining $\xi_1$ and $\xi_2$.  One can show that $\pi$ is a continuous  proper map (\emph{cf.}\ \cite[\S 8.2]{Gromov87} and \cite{Bowditch99ConvergenceGroups}), though we will not use this result.

Tukia and Chow have used the following criterion for conical limit points in their study of quasiconformal groups acting on the boundary of a negatively curved symmetric space in \cite{Tukia86,Chow}. 

\begin{cor}
\label{cor:TukiaConicalLimit}
Let $X$ be any proper visual $\CAT(-1)$ space whose boundary is uniformly perfect, and suppose $G$ acts as a uniformly quasisymmetric group on $\boundary X$.  A point $\zeta \in \boundary X$ is a conical limit point of the action if and only if the following condition holds:

There exists a sequence $(g_i)$ in $G$ such that for some \textup{(}each\textup{)} $(\xi_0,\xi_1,\xi_2)  \in \Theta$ and some \textup{(}each\textup{)} geodesic line $c$ tending to $\zeta$, we have
\[
   x_i = \pi g_i(\xi_0,\xi_1,\xi_2) \to \zeta
\]
and the distances $d ( x_i,c )$ are bounded above.
\end{cor}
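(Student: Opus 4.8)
The plan is to reduce the statement to Proposition~\ref{prop:ConicalLimit}, whose geometric criterion is phrased in terms of quasi-action orbit points $g_i(x)$ rather than projections $\pi g_i(\xi_0,\xi_1,\xi_2)$ of the triple space. The bridge between the two formulations will be provided by first promoting the uniformly quasisymmetric boundary action to a quasi-action on $X$, and then showing that the Ahlfors--Cheeger projection $\pi$ intertwines this quasi-action with the boundary action up to uniformly bounded error.

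First I would invoke Theorem~\ref{thm:QStoQI}. Since $X$ is visual and $\boundary X$ is uniformly perfect, each $\eta$--quasisymmetry $g\colon \boundary X \to \boundary X$ extends to a $(\lambda,\epsilon)$--quasi-isometry $\rho(g)\colon X \to X$ with $\boundary\rho(g) = g$, and the constants $(\lambda,\epsilon)$ depend only on $\eta$, the hyperbolicity constant, and the chosen visual metric. Because $G$ acts with a common modulus $\eta$, these constants are uniform over $g \in G$. For $g_1,g_2 \in G$ the quasi-isometries $\rho(g_1)\of\rho(g_2)$ and $\rho(g_1 g_2)$ both induce the boundary map $g_1 g_2$ by Theorem~\ref{thm:QItoQS}, so by the uniqueness clause of Theorem~\ref{thm:QStoQI} they lie within a sup-distance bounded solely in terms of $(\lambda,\epsilon)$ and $\delta$. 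Thus $\rho$ is a genuine quasi-action, and the convergence group action it induces on $\boundary X$ via Proposition~\ref{prop:ConvergenceGroup} is exactly the given action, so the two notions of conical limit point coincide.

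The key step is to establish coarse equivariance of $\pi$: for a fixed triple $T=(\xi_0,\xi_1,\xi_2)$ with $p=\pi(T)$, the distance $d\bigl(\rho(g)p,\ \pi(gT)\bigr)$ is bounded uniformly in $g \in G$. Here $\pi(T)$ lies within bounded distance of an approximate center of the ideal triangle $\Delta(\xi_0,\xi_1,\xi_2)$, a standard feature of orthogonal projection in a $\CAT(-1)$ space. The quasi-isometry $\rho(g)$ carries each geodesic side to a $(\lambda,\epsilon)$--quasigeodesic with ideal endpoints $g\xi_i,g\xi_j$ (since $\boundary\rho(g)=g$), which by stability of quasigeodesics lies within a Hausdorff distance depending only on $(\lambda,\epsilon,\delta)$ of the corresponding side of $\Delta(g\xi_0,g\xi_1,g\xi_2)$. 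Hence $\rho(g)p$ is within bounded distance of an approximate center of $\Delta(g\xi_0,g\xi_1,g\xi_2)$, and therefore of $\pi(gT)$.

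Finally, applying this claim along the sequence $(g_i)$ gives $d\bigl(\rho(g_i)p, x_i\bigr)=O(1)$, so $x_i \to \zeta$ if and only if $\rho(g_i)p \to \zeta$, and $d(x_i,c)$ is bounded if and only if $d\bigl(\rho(g_i)p,c\bigr)$ is bounded. Proposition~\ref{prop:ConicalLimit}, applied to the quasi-action $\rho$ with basepoint $x=p$, then identifies these conditions with $\zeta$ being a conical limit point. Independence from the choice of triple follows from the ``for each $x$'' clause of Proposition~\ref{prop:ConicalLimit} together with coarse equivariance, while independence from the ray $c$ follows because asymptotic rays to $\zeta$ have bounded Hausdorff distance. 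The main obstacle I anticipate is the uniformity in the coarse equivariance step: one must confirm that every implicit constant---the Morse stability constant, the gap between $\pi(T)$ and an approximate center, and the sup-distance bound on the quasi-action---depends only on $(\lambda,\epsilon,\delta)$ and not on $g$ or on the triple, so that the conclusion survives passage to the limit along $(g_i)$.
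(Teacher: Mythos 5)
Your proposal is correct and follows essentially the same route as the paper's proof: promote the uniformly quasisymmetric action to a quasi-action on $X$ via Theorem~\ref{thm:QStoQI}, establish rough equivariance of $\pi$ by noting that $\pi(\xi_0,\xi_1,\xi_2)$ is an approximate center of the ideal triangle and that quasi-isometries carry ideal triangles to bounded neighborhoods of ideal triangles, then reduce to Proposition~\ref{prop:ConicalLimit}. The extra care you take with uniformity of constants and with verifying the quasi-action axioms is sound and only fills in details the paper leaves implicit.
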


\begin{proof}
The quasisymmetric action of $G$ on $\boundary X$ is the boundary action of a quasi-action of $G$ on $X$ by Theorem~\ref{thm:QStoQI}.  For any such quasi-action, the map $\pi$ must be roughly equivariant in the sense that the distances
\[
   d \bigl( \pi g_i(\xi_0,\xi_1,\xi_2),
     g_i \pi(\xi_0,\xi_1,\xi_2) \bigr)
\]
are uniformly bounded above.
Indeed, the point $\pi(\xi_0,\xi_1,\xi_2)$ lies within a uniformly bounded distance of all three sides of the corresponding ideal triangle.
Since the image of an ideal geodesic triangle under a quasi-isometry is within a bounded distance of an ideal geodesic triangle, the quasi-equivariance follows (\emph{cf.}\ \cite{Tukia85_QCExtensions} Thm.~3.6).
Thus the given condition is equivalent to the condition in Proposition~\ref{prop:ConicalLimit}.
\end{proof}

%%%%%%%%%%%%%%%%%%%%%%%%%%%%%%%%%%
%%                  BIBLIOGRAPHY
%%%%%%%%%%%%%%%%%%%%%%%%%%%%%%%%%%%%%%%%%%%%%%%%%%%%%%%%%%%%%%%%%%%%%%%%
\bibliographystyle{alpha}
\bibliography{chruska.bib}

\end{document}